
\documentclass[10pt,a4paper]{article}

\usepackage{amsmath, amssymb, tikz-cd}
\usepackage{amscd}
\usepackage{amssymb}
\usepackage{amsthm}
\usepackage{xspace}
\usepackage[all,tips]{xy}
\usepackage{syntonly}
\usepackage{hyperref}
\usepackage{arydshln}
\usepackage{tikz-cd}
\usepackage{todo}
\usepackage{enumerate}
\usepackage{a4wide}

%
%
\newtheorem{thm}{Theorem}[section]
\newtheorem{cor}[thm]{Corollary}
\newtheorem{prop}[thm]{Proposition}
\newtheorem{lemma}[thm]{Lemma}
\newtheorem{ques}{Question}

\newtheorem{thmx}{Theorem}

\theoremstyle{definition}

\newtheorem{defn}[thm]{Definition}

\newtheorem{ex}[thm]{Example}

\DeclareMathOperator{\Aut}{Aut} \DeclareMathOperator{\Out}{Out}
 
\DeclareMathOperator{\GL}{GL}

\DeclareMathOperator{\Inn}{Inn}
\DeclareMathOperator{\Fitt}{Fitt}\DeclareMathOperator{\Hom}{Hom}

\DeclareMathOperator{\Fr}{Fr}
\newcommand{\Z}{\mathbb{Z}}
\newcommand{\Q}{\mathbb{Q}}
\newcommand{\C}{\mathbb{C}}

\newcommand\restr[2]{{
		\left.\kern-\nulldelimiterspace 
		#1 
		\vphantom{\big|} 
		\right|_{#2} 
}}

\begin{document} 
\title{Automorphism groups of solvable groups\\ of finite abelian ranks}
\author{Jonas Der\'e\thanks{KU Leuven Kulak, Kortrijk, Belgium. Email: \tt{jonas.dere@kuleuven.be}. The author is supported by a start-up grant of KU Leuven (grant number 3E210985).} \ and Mark Pengitore\thanks{IMPAN, Warsaw, Poland. E-mail: \tt{mpengitore@impan.pl}. The author is supported by the National Science Center Grant Maestro-13 UMO- 2021/42/A/ST1/00306}} 
\maketitle

\abstract{This paper gives a new explicit construction of the $\Q$-algebraic hull for virtually solvable groups $\Gamma$ of finite abelian ranks, taking into account the spectrum $S$ of the group $\Gamma$. As an application, we make a detailed study of the structure of $\Aut(\Gamma)$ in the finitely generated case and show that a number of natural subgroups are $S$-arithmetic under the condition that $\Fitt(\Gamma)$ is $S$-arithmetic. We then proceed by demonstrating that $\Out(\Gamma)$ has a $S$-arithmetic image in the group of algebraic outer automorphisms of the $\Q$-algebraic hull. We finish by discussing further applications of the $\Q$-algebraic hull towards an open conjecture by Nekrashevych and Pete and topological fixed point theory.}

\noindent \textbf{Keywords}: solvable groups of finite rank; automorphism groups; $S$-arithmetic groups \\
\textbf{MSC2020}: 20F16, 22E25, 20G30, 20F28
\tableofcontents

\section{Introduction}
\subsection{Background: $\Q$-algebraic hulls} Given a virtually polycyclic group $\Gamma$, a natural way to study the automorphism group $\Aut(\Gamma)$ of $\Gamma$ is through the $\mathbb{Q}$-algebraic hull of $\Gamma$, which we denote as $\textbf{H}_\Gamma$. To frame our dicussion, we give the precise definition of the $\mathbb{Q}$-algebraic hull that was originally introduced by Mostow, see \cite{mostow}. In the context of torsion-free, finitely generated nilpotent groups, its construction is due to Mal'tsev in \cite{malcev} and is known as the rational Mal'tsev completion.
\begin{defn}
	\label{defpolycyclic}
Let $\Gamma$ be a virtually polycyclic group. A $\mathbb{Q}$-algebraic hull for $\Gamma$ is a pair $(\textbf{H}_\Gamma, \iota )$ where $\textbf{H}_\Gamma$ is a $\mathbb{Q}$-defined linear algebraic group and $\iota  \colon \Gamma \to \textbf{H}_\Gamma(\mathbb{Q})$ is an injective morphism such that
\begin{enumerate}[(i)]
\item $\iota (\Gamma)$ is Zariski dense in $\textbf{H}_\Gamma$;
\item the centralizer of the unipotent radical $\textbf{U}(\textbf{H}_\Gamma)$ is contained in $\textbf{U}(\textbf{H}_\Gamma)$;
\item $\dim_{\mathbb{Q}} (\textbf{U}(\textbf{H}_\Gamma)) = h(\Gamma)$.
\end{enumerate}
Moreover, under these assumptions, the intersection $\iota (\Gamma) \cap \textbf{H}_\Gamma(\mathbb{Z})$ is of finite index in $i(\Gamma)$.
\end{defn}
\noindent The constant $h(\Gamma)$ is known as the \emph{Hirsch length} of $\Gamma$, namely the number of infinite cyclic factors in a composition series of $\Gamma$. It is straightforward to see that this value is independent of the choice of composition series of $\Gamma$.

It is well known that a virtually polycyclic group admits a $\Q$-algebraic hull if and only if the only finite normal subgroup is the trivial one. We note that the conditions from Definition \ref{defpolycyclic} imply that it is unique up to algebraic isomorphism. Moreover, every monomorphism on $\Gamma$ extends to an algebraic automorphism of $\textbf{H}_\Gamma$. As an immediate consequence, we have that $\Aut(\Gamma)$ is linear over $\mathbb{Z}$ for groups having a $\Q$-algebraic hull. More generally, the $\mathbb{Q}$-algebraic hull allows us to bring powerful techniques from linear algebraic groups to study polycyclic groups, which has been useful to prove many deep and interesting properties about polycyclic groups and their (outer) automorphism groups. 

For instance, Baues and Gr\"{u}newald \cite[Theorem 1.1]{baues_grunewald} demonstrated that $\Out(\Gamma)$ is an arithmetic group, namely by constructing a $\mathbb{Q}$-defined linear algebraic group $\textbf{G} \leq \GL(n,\mathbb{C})$ such that $\Out(\Gamma)$ is commensurable with the group of integer points $\textbf{G}(\mathbb{Z})$. This is a remarkable result because previously this was only known in the context of finitely generated nilpotent groups due to Segal \cite{segal_outer}. Indeed, this demonstrates that the outer automorphism group of a polycyclic group satisfies strong arithmetic and algebraic properties and has many nice finiteness properties. A simple model of this result is in the case $\Gamma = \mathbb{Z}^n$ for $n \in \mathbb{N}$, where we have
$$
\Aut(\mathbb{Z}^n) = \Out(\mathbb{Z}^n) = \GL(n,\mathbb{Z}).
$$

More generally, $\Aut(\Gamma)$ was shown to be arithmetic for every finitely generated virtually nilpotent group $\Gamma$, see \cite{auslander_baumslag, baumslag_nilpotent, baumslag_lecture_nilpotent,  segal}. However, this result does not generalize to a general polycyclic group. Indeed, Baues and Gr\"{u}newald \cite{baues_grunewald} construct a polycyclic group $\Gamma$ of Hirsch length $4$ such that $\Aut(\Gamma)$ is not arithmetic. On the other hand, they demonstrate using the $\Q$-algebraic hull construction that every virtually polycyclic group $\Gamma$ contains a finite index subgroup $\Gamma^\prime$ such that $\Aut(\Gamma^\prime)$ is arithmetic.

A natural question is how far can we generalize the construction of the $\mathbb{Q}$-algebraic hull of a virtually polycyclic group within the collection of finitely generated solvable groups in order to recover and generalize the results of the above discussion. It is well known that not all finitely generated solvable groups are linear or even have a solvable word problem, so an extension of these results to all finitely generated solvable groups is not possible. Even within the collection of linear groups, there exist solvable groups such as $\mathbb{Z} \wr \mathbb{Z}$ which are linear over $\mathbb{R}$ but not linear over $\mathbb{Q}$. 

We start by constructing a $\mathbb{Q}$-linear algebraic hull for finitely generated solvable groups which are $\mathbb{Q}$-linear but not necessarily linear over $\mathbb{Z}$. Such groups are a part of the collection of finitely generated solvable groups of finite abelian ranks. An abelian group $A$ has finite abelian ranks if $\dim_{\mathbb{Q}}(\mathbb{Q} \otimes_{\mathbb{Z}} A) < \infty$ and for all primes $p$, we have $\dim_{\mathbb{F}_p}(\Hom(\mathbb{F}_p,A)) < \infty$. A solvable group $\Gamma$ is of \emph{finite abelian ranks} if it admits a decreasing normal series $\Gamma = \Gamma_1 \supset \cdots \supset \Gamma_m = \{1\}$ such that $\Gamma_{i} / \Gamma_{i+1}$ is an abelian group of finite abelian ranks.  The \emph{Hirsch length} of a solvable group of finite abelian ranks is then given by
$$
h(\Gamma) = \sum_{i=1}^{m-1} \dim_\Q(\mathbb{Q} \otimes_{\mathbb{Z}} (\Gamma_i / \Gamma_{i+1})).
$$
We see that $h(\Gamma)$ is well defined independent of the series $\{\Gamma_i\}$, and moreover, we observe that this notion of Hirsch length agrees with the usual notion when $\Gamma$ is a polycyclic group. For this collection of solvable groups, the existence of a $\mathbb{Q}$-algebraic hull was first given by  Arapura and Nori \cite{arapura_nori} in order to study K\"ahler manifolds. Their method is a categorical argument where they use the pro-algebraic hull associated to $\Gamma$ equipped with the discrete topology and a topological field. In our case, our field is $\mathbb{Q}$ which is equipped with the Euclidean metric and the discrete topology on $\Gamma$ comes from the word metric.

\subsection{Main results}

For our first result, we reprove the existence of the $\mathbb{Q}$-algebraic hull for a finitely generated virtually solvable group $\Gamma$ of finite abelian ranks. Our proof gives a direct construction of  the $\mathbb{Q}$-algebraic hull $\textbf{H}_{\Gamma}$ and a morphism $\iota \colon \Gamma \to \textbf{H}_{\Gamma}(\mathbb{Q})$, allowing us to realize the image in the smaller group $\textbf{H}_{\Gamma}(\Z[\frac{1}{S}])$, where $S$ is the spectrum of $\Gamma$. As a reminder to the reader, the spectrum is the set of primes $p$ for which there exists a pair of subgroups $N \trianglelefteq M$ of $\Gamma$ such that $M/N$ is a $p$-quasicyclic group; see Definition \ref{def:quasi} for more details. 
\begin{defn}\label{def:algebraichull}
Let $\Gamma$ be a virtually solvable group of finite abelian ranks with spectrum $S$. A $\mathbb{Q}$-algebraic hull for $\Gamma$ is a pair $(\textbf{H}_\Gamma, \iota )$ where $\textbf{H}_\Gamma$ is a $\mathbb{Q}$-defined linear algebraic group and  $\iota  \colon \Gamma \to \textbf{H}_\Gamma$ is an injective morphism such that
\begin{enumerate}[(i)]
\item $\iota (\Gamma) \leq \textbf{H}_\Gamma(\Z[\frac{1}{S}])$ is Zariski dense in $\textbf{H}_\Gamma$;
\item the centralizer of the unipotent radical $\textbf{U}(\textbf{H}_\Gamma)$ is contained in $\textbf{U}(\textbf{H}_\Gamma)$;
\item $\dim_\Q (\textbf{U}(\textbf{H}_\Gamma)) = h(\Gamma)$.
\end{enumerate}

\end{defn}
\noindent When $S$ is empty, or equivalently when $\Gamma$ is a polycyclic group, the morphism is of the form $\iota \colon \Gamma \to \textbf{H}_\Gamma(\mathbb{Z})$. In general, when $\Gamma$ has a $\Q$-algebraic hull, it will admit a faithful representation into $\textbf{H}_\Gamma(\mathbb{Z}[\frac{1}{S}])$ and thus must be linear. We also study the properties of the $\Q$-algebraic hull.
\begin{thmx}\label{algebraic_hull_thm}
	A virtually torsion-free solvable group of finite abelian ranks $\Gamma$ with spectrum $S$ has a $\mathbb{Q}$-algebraic hull if and only if $\Gamma$ has a trivial maximal normal torsion subgroup. Moreover, the $\mathbb{Q}$-algebraic hull is unique up to algebraic isomorphism, and every monomorphism $\psi$ of $\Gamma$ uniquely extends to an algebraic automorphism $\Psi$ of $\textbf{H}_\Gamma$.
\end{thmx}

Our main application of Theorem \ref{algebraic_hull_thm} extends \cite[Theorem 1.5]{baues_grunewald} to when $\Gamma$ is a finitely generated virtually torsion-free solvable group of finite abelian ranks satisfying an arithmetic condition on the Fitting subgroup $\Fitt(\Gamma)$, which is the maximal nilpotent normal subgroup of $\Gamma$. 
As a reminder, a subgroup $\Gamma$ of a $\mathbb{Q}$-defined linear algebraic group $\textbf{G}$ is $S$-arithmetic if $\Gamma$ is commensurable with $\textbf{G}(\mathbb{Z}[\frac{1}{S}])$. A group $\Gamma$ is $S$-arithmetic if it is abstractly isomorphic to a $S$-arithmetic subgroup of a $\mathbb{Q}$-defined linear algebraic group. If this linear algebraic group is moreover unipotent, we will call $\Gamma$ unipotently $S$-arithmetic.

Let $\Gamma$ be a finitely generated virtually torsion-free  solvable group of finite abelian ranks which has a trivial maximal normal torsion subgroup, and let $\Fitt(\Gamma)$ be the Fitting subgroup of $\Gamma$. Given that $\Fitt(\Gamma)$ is characteristic in $\Gamma$, we may define
$$
A_{\Gamma | \Fitt(\Gamma)} = \{ \varphi \in \Aut(\Gamma) \: | \: \varphi|_{\Gamma/\Fitt(\Gamma)} = \text{id}_{\Gamma/\Fitt(\Gamma)} \}.
$$
For the next theorem, we denote $\Aut(\textbf{H}_\Gamma)$ as the group of algebraic automorphisms of $\textbf{H}_{\Gamma}$. We now give a structural result for the automorphism group of a finitely generated virtually solvable group of finite abelian ranks when $\Fitt(\Gamma)$ is unipotently $S$-arithmetic.

\begin{thmx}\label{theorem1.4_baues_grunewald}
Let $\Gamma$ be a finitely generated solvable group of finite abelian ranks with spectrum $S$ and trivial maximal normal torsion subgroup such that $\Fitt(\Gamma)$ is unipotently $S$-arithmetic. The group $A_{\Gamma  |  \Fitt(\Gamma)}$ is a normal subgroup of $\Aut(\Gamma)$ that is $S$-arithmetic in its Zariski closure in $\Aut(\textbf{H}_\Gamma)$. There exists a nilpotent group $B \leq \Gamma$ such that $A_{\Gamma |  \Fitt(\Gamma)} \cdot \Inn_B^\Gamma$ has finite index in $\Aut(\Gamma)$.
\end{thmx}

For the last main theorem, we note that $\Inn_{\textbf{H}_\Gamma}$ is a $\mathbb{Q}$-closed subgroup of the group of the algebraic automorphisms $\Aut(\textbf{H}_\Gamma)$ of  $\textbf{H}_\Gamma$. We then call the quotient $\Out_a(\textbf{H}_\Gamma) = \mathcal{A}_\Gamma / \Inn_{\textbf{H}_\Gamma}$ the algebraic outer automorphism group of $\textbf{H}_\Gamma$, which is again a $\mathbb{Q}$-defined linear algebraic group. We obtain a group morphism $\pi_\Gamma \colon \Out(\Gamma) \to \Out_a(\textbf{H}_\Gamma)$ by restricting the quotient morphism $\Aut(\textbf{H}_\Gamma) \to \Out_a(\textbf{H}_\Gamma)$ to the subgroup $\Aut(\Gamma) \leq \mathcal{A}_\Gamma$. 
\begin{thmx}\label{out(G)_arithmetic}
	Let $\Gamma$ be a finitely generated virtually solvable group of finite abelian ranks without torsion normal subgroups with spectrum $S$, and suppose that $\Fitt(\Gamma)$ is unipotently $S$-arithmetic. Then
	\begin{enumerate}[(1)]
		\item the image $\pi_\Gamma(\Out(\Gamma))$ is a $S$-arithmetic group in its Zariski closure;
		\item the kernel $\ker(\pi_\Gamma)$ has finite Hirsch length, is virtually abelian, and is centralized by a finite index subgroup of $\Out(\Gamma)$;
		\item if $\Gamma$ is moreover virtually nilpotent, then the kernel of $\pi_\Gamma$ is finite.
	\end{enumerate}
\end{thmx}
\noindent It is important to note that Theorem \ref{calculation_baumslag_solitar} implies that the natural generalization of  \cite[Theorem 1.1]{baues_grunewald} fails for most solvable Baumslag-Solitar groups which are standard examples of finitely generated torsion-free solvable groups of finite abelian ranks that are not polycyclic. Subsequently, Theorem \ref{out(G)_arithmetic} is the best one can hope for in the setting of finitely generated virtually solvable groups of finite abelian ranks. 

In the final section several other applications of the $\Q$-algebraic hull are given, including a positive answer to a conjecture of Nekrashevych and Pete about the existence of strongly scale-invariant monomorphisms for the class of virtually solvable groups of finite abelian ranks. In a similar vein, we demonstrate that in this class of groups, the only endomorphisms with a well-defined Reidemeister-zeta function are the ones that are eventually virtually nilpotent.

\subsection{Open questions}
In our main results, we always assume that $\Fitt(\Gamma)$ is $S$-arithmetic in a unipotent linear algebraic group. It seems likely that this is in fact a necessary condition, also motivated by Theorem \ref{thm_nonarith}.
\begin{ques}
Do there exist examples of finitely generated virtually FAR-groups with $A_{\Gamma |  \Fitt(\Gamma)} $ $S$-arithmetic but $\Fitt(\Gamma)$ not unipotently $S$-arithmetic?
\end{ques}

Given a finitely generated virtually solvable group $\Gamma$ of finite abelian ranks with spectrum $S$ such that $\Fitt(\Gamma)$ is unipotently $S$-arithmetic, it remains open when $\Out(\Gamma)$ is $S$-arithmetic.
\begin{ques}\label{question_1}
Under what conditions on $\Gamma$ is the group $\Out(\Gamma)$ an $S$-arithmetic group?
\end{ques}
\noindent By Theorem \ref{calculation_baumslag_solitar} we know that $\Out(\Gamma)$ is not $S$-arithmetic even for Baumslag-Solitar groups, where $\Fitt(\Gamma)$ is automatically $S$-arithmetic.. Therefore, we expect that $\Out(\Gamma)$ is generally not $S$-arithmetic for finitely generated solvable groups of finite abelian ranks that are not virtually polycyclic.

A natural example of a $\mathbb{R}$-linear group that is not $\mathbb{Q}$-linear is given by $\mathbb{Z} \wr \mathbb{Z}$. This group is isomorphic to the matrix group given by
$$
\mathbb{Z} \wr \mathbb{Z} \cong \left\{ \begin{bmatrix}
 \pi^m & f \\
0 & 1 
\end{bmatrix}
:
m \in \mathbb{Z}, f \in \mathbb{Z}[\pi, \pi^{-1}]
\right\}.
$$
It is clear that $\mathbb{Z} \wr \mathbb{Z}$ is isomorphic to an index $2$-subgroup of the group of $\mathbb{Z}[X,X^{-1}]$-points of the $\mathbb{R}$-defined linear algebraic group $\mathbb{C} \rtimes \mathbb{C}^*$ where $\mathbb{C}$ is equipped with its additive abelian structure and $\mathbb{C}^*$ with its multiplicative structure.  Hence, it is natural to ask whether there exists a $\mathbb{R}$-algebraic hull for $\mathbb{R}$-linear groups that are not $\mathbb{Q}$-linear. Therefore, we finish with the following question.
\begin{ques} Suppose that $\Gamma$ is a finitely generated solvable group which admits a faithful finite-dimensional representation over a characteristic $0$ field but no faithful finite-dimensional representation over $\mathbb{Q}$. Can one construct an analogue of the $\mathbb{Q}$-linear algebraic hull for $\Gamma$?
\end{ques}
If such a construction exists, one might recover and generalize the results of this article to the more general class of $\mathbb{R}$-linear groups that are not $\mathbb{Q}$-linear.

\subsection{Plan of the paper}
Section \ref{background_section} gives background material needed for this article with subsections devoted to solvable group of finite abelian ranks and the basics of linear algebraic groups. Section \ref{algebraic_hull_section} gives an explicit construction of the $\Q$-algebraic hull for a solvable group of finite abelian ranks and details its properties proving Theorem \ref{algebraic_hull_thm}. Section \ref{sec:thick} introduces the notion of a thickening of a finitely generated virtually torsion-free solvable group of finite abelian ranks which allows one to find virtually nilpotent supplements to the Fitting subgroup of a solvable group of finite abelian ranks. Section \ref{section_s_arithmetic_subgroups} discusses how to construct $S$-arithmetic subgroups of $\Aut(\Gamma)$ using the $\Q$-algebraic hull of $\Gamma$. Section \ref{sec_theorem1.4_proof} gives the proof of Theorem \ref{algebraic_hull_thm}. Section \ref{section_structure_out} demonstrates that the natural generalization of   \cite[Theorem 1.1]{baues_grunewald} does not hold for most solvable Baumslag-Solitar groups and gives examples of finitely generated torsion-free solvable groups $\Gamma$ of finite abelian ranks with spectrum $S$ such that $\Fitt(\Gamma)$ is not $S$-arithmetic such that the statement of Theorem \ref{algebraic_hull_thm} does not hold, and gives the proof of Theorem \ref{out(G)_arithmetic}. Finally, Section \ref{section_futher_applications} gives more applications of the $\Q$-algebraic hull to questions about endomorphisms of solvable groups of finite abelian ranks.

\section{Background}\label{background_section}

\subsection{Solvable groups of finite rank}
We give the necessary results about solvable groups needed for this paper; a more detailed discussion of the following topics can be found in \cite{lennox_robinson,segal}. In order to fix some notation, given a group $\Gamma$ with a subset $X \subset \Gamma$, we denote the subgroup generated by $X$ as $\left<X\right> \subset \Gamma$. Given two subgroups $G$ and $H$ in $\Gamma$, we denote the normalizer of $G$ in $H$ as $N_H(G)$ and the centralizer of $G$ in $H$ as $C_H(G)$.

Let $\Gamma$ be a group. The $0$-th term of the \emph{derived series} of $\Gamma$ is given as $\Gamma^{(0)}= \Gamma$, and for $i>0$, the $i$-th term of the derived series is given by $\Gamma^{(i)} = [\Gamma^{(i-1)}, \Gamma^{(i-1)}]$. The first term of the \emph{lower central series} is defined as $\gamma_1(\Gamma) = \Gamma$, and for $i>1$, the $i$-th term of the lower central series is given by $\gamma_i(\Gamma) = [\Gamma, \gamma_{i-1}(\Gamma)]$.

\begin{defn}
Let $\Gamma$ be a group. We say that $\Gamma$ is a \emph{solvable group of derived length $s$} if $s$ is the smallest natural number such that $\Gamma^{(s+1)} = \{1\}$, and when the derived length is unspecified, we simply refer to $\Gamma$ as a solvable group. 

We say that $\Gamma$ is a \emph{nilpotent group of step length $c$} if $c$ is the smallest natural number such that $\gamma_{c+1}(\Gamma) = \{1\}$. As before, if the step length is unspecified, we then say that $\Gamma$ is a nilpotent group.
\end{defn}


For a solvable group $\Gamma$, the \emph{Fitting subgroup} of $\Gamma$ is the subgroup generated by all normal nilpotent subgroups of $\Gamma$. We denote the Fitting subgroup of $\Gamma$ as
$$
\Fitt(\Gamma) = \left< N \: | \: N \trianglelefteq \Gamma, \: N \text{ nilpotent} \right>.
$$
The subgroup $\Fitt(\Gamma)$ is always non-trivial since it contains the last nontrivial step of the derived series. Moreover, we have that $\Gamma = \Fitt(\Gamma)$ if and only if $\Gamma$ is nilpotent.

Because the join of normal torsion subgroups of a finitely generated virtually solvable group $\Gamma$ is a torsion group, there exists a unique largest normal torsion subgroup which we denote as $\tau(\Gamma)$. We refer to any  finitely  generated virtually solvable group where $\tau(\Gamma) = 1$ as a \emph{WTN}-group. When $\Gamma$ is a nilpotent group, the subgroup $\tau(\Gamma)$ contains all of the elements of finite order by \cite[Corollary 10, page 13]{segal}, and thus, $\Gamma/\tau(\Gamma)$ is torsion-free. When $\Gamma$ is a finitely generated nilpotent group, then $\tau(\Gamma)$ is finite.

Let $A$ be an abelian group. The \emph{torsion-free rank} of $A$ is defined as
$$
r_0(A) = \dim_{\mathbb{Q}}(A \otimes_{\mathbb{Z}} \mathbb{Q}).
$$
For any prime $p$, the $p$-\emph{rank} of the abelian group $A$ is given as
$$
r_p(A) = \dim_{\mathbb{F}_p}(\Hom(\mathbb{F}_p, A)).
$$
Letting $P = \{a \in A \: | \: pa = 0\}$, we have that $\Hom(\mathbb{F}_p, A) = \Hom(\mathbb{F}_p, P)$. Hence,
$$
r_p(A) = \dim_{\mathbb{F}_p}(P).
$$
A solvable group $\Gamma$ has \emph{finite abelian ranks}, or is a \emph{solvable FAR-group}, if there is a series $\{\Gamma_i\}_{i=0}^k$ such that each factor $\Gamma_{i+1} / \Gamma_i$ is abelian and $r_p(\Gamma_{i+1}/\Gamma_i)$ is finite for all $p=0$ and $p$ prime. In this context, we denote the Hirsch length of $\Gamma$ as
$$
h(\Gamma) = \sum_{i=1}^k r_0(\Gamma_{i} / \Gamma_{i-1}).
$$
which by a standard argument does not depend on the choice of series $\Gamma_i$. Note that if $h(\Gamma) = 0$ if and only if $\Gamma$ is a torsion group. For every normal subgroup $N \triangleleft \Gamma$, we have that $h(\Gamma) = h(N) + h\left(\Gamma / N \right)$.

A group $\Gamma$ satisfies the \emph{maximal condition}, also known as \textbf{Max}, \emph{Noetherian}, or \emph{slender}, if every strictly ascending chain of subgroups
$$
\Gamma_1 \lneq \Gamma_2 \lneq \Gamma_3 \lneq \cdots
$$
ends after finitely many terms. It is straightforward that a group satisfies \textbf{Max} if and only if every subgroup is finitely generated. Any finitely generated nilpotent group $N$ is well known to satisfy $\textbf{Max}$. A finitely generated solvable group satisfies \textbf{Max} if and only if it is polycyclic. We say that $\Gamma$ satisfies the \emph{minimal condition}, also known as \textbf{Min}, if every strictly descending chain of subgroups
$$
\Gamma_1 \gneq \Gamma_2 \gneq \Gamma_3 \gneq \cdots
$$
ends after finitely many terms. Groups that satisfy \textbf{Min} are also known as \emph{Artinian}.

\begin{defn}
We say a solvable group $\Gamma$ is \emph{minimax} if it has a normal series $\{\Gamma_i\}_{i=1}^k$ of finite length such that each quotient $\Gamma_i / \Gamma_{i-1}$ satisfies \textbf{Max} or \textbf{Min}.
\end{defn}
\noindent When given a finitely generated solvable minimax group $\Gamma$, we have by \cite[5.2.2]{lennox_robinson} that $\Fitt(\Gamma)$ is nilpotent and $\Gamma/ \Fitt(\Gamma)$ is virtually abelian. Therefore, we have that $\Gamma$ is virtually nilpotent-by-abelian.

In the context of finitely generated solvable groups, we have by  \cite[Corollary 10.5.3]{lennox_robinson} that any finitely generated solvable group with finite abelian ranks is a minimax group. Therefore, we have the following lemma.
\begin{lemma}\label{rank_definitions_finitely_generated}
Let $\Gamma$ be a finitely generated solvable group. $\Gamma$ has finite abelian ranks if and only if $\Gamma$ is minimax. 
\end{lemma}

Whenever we are given a finitely generated solvable FAR-group, it must be minimax by the previous lemma. If $\Gamma$ is a solvable minimax group, then any subgroup or quotient of $\Gamma$ is also a solvable minimax group. In particular, $\Gamma$ does not contain the rationals as a subgroup.

\begin{defn}
	\label{def:quasi}
Let $p$ be a prime. The $p$-\emph{quasicyclic group} or Pr\"{u}fer-$p$ group, denoted $\mathbb{Z}(p^\infty)$, is given by the direct limit $\mathbb{Z}(p^\infty) \cong \underset{\longrightarrow}{\lim} \: \mathbb{Z} / p^n\mathbb{Z}$ where maps are given by multiplication by $p$. We will say a group $\Gamma$ is quasicyclic if it is a $p$-quasicyclic group for some prime $p$.
\end{defn}

Using the notion of $p$-quasicyclic groups, we introduce the spectrum of a finitely generated solvable group $\Gamma$ which is a subset of integral primes that encode linearity information of $\Gamma$ when $\Gamma$ is $\mathbb{Q}$-linear.
\begin{defn}
Let $\Gamma$ be a finitely generated solvable FAR-group. The \emph{spectrum} of $\Gamma$ is the set of primes $p$ such that there exist subgroups $N \triangleleft M$ in $\Gamma$ such that $M/N$ is a $p$-quasicyclic group.
\end{defn}

A group $\Gamma$ is \emph{residually finite} if for all non-trivial elements $g \in \Gamma$, there exists a surjective morphism $\varphi \colon \Gamma \to Q$ to a finite group $Q$ such that $\varphi(g) \neq 1$. The \emph{finite residual of $\Gamma$} is defined as 
$$
\Fr(\Gamma) = \bigcap_{N \triangleleft \Gamma, [\Gamma:N] < \infty} N.
$$
The following lemma gives a characterization of when $\Fr(\Gamma) = 1$ for finitely generated solvable FAR-groups $\Gamma$, i.e., when $\Gamma$ is residually finite.

\begin{lemma}
	\label{lem:equivalencesRF}
Let $\Gamma$ be a virtually solvable FAR-group with spectrum $S$. The following are equivalent:
\begin{enumerate}[(i)]
\item $\Gamma$ is residually finite;
\item $\tau(\Gamma)$ is finite.
\end{enumerate}
Moreover, if $\Gamma$ satisfies one of these properties, it is virtually torsion-free and linear over $\mathbb{Z}[\frac{1}{S}].$
\end{lemma}
\begin{proof}

From \cite[Theorem 5.3.1.]{lennox_robinson}, we see that $\Fr(\Gamma)$ is the unique maximal radicable subgroup of $\Gamma$ and that $\Fr(\Gamma)$ is abelian. As the group of rationals $\Q$ is not minimax, we conclude that $\Fr(\Gamma)$ is a direct sum of quasicyclic groups. Given that $\tau(\Gamma)$ contains all normal torsion subgroups, we have $\Fr(\Gamma) \leq \tau(\Gamma)$. Moreover, as $\tau(\Gamma)$ is a Chernikov group by \cite[5.2.1]{lennox_robinson} and all radicable subgroups are contained in $\Fr(\Gamma)$, it holds that
$\tau(\Gamma)$ is a finite extension of $\Fr(\Gamma)$. 
Since $\Gamma$ is residually finite if and only if $\Fr(\Gamma)$ is trivial, it indeed holds that $\Gamma$ is residually finite if and only if $\tau(\Gamma)$ is finite. In particular, this discussion implies that $(i)$ and $(ii)$ are equivalent. 

Note that the fact that $\Gamma$ is virtually torsion-free also follows from the discussion above. For the final statement, \cite[5.1.8]{lennox_robinson} implies that virtually torsion-free groups are linear over $\Z[\frac{1}{S}]$. 	
\end{proof}

Note that a classical result by Mal'tsev shows that finitely generated linear groups are always residually finite, giving a converse to the last statement in this special case. As any group of finite abelian ranks is always solvable, we will often suppress the term solvable when talking about groups of finite abelian ranks and just write FAR-group.

\subsection{Linear algebraic groups}\label{background_linear_algebraic_groups}
For more details for the discussion below, we refer to \cite{borel, springer}. 

Let $R$ be a subring of $\mathbb{Q}$. For a set of primes $S$ in $R$, we denote $R[\frac{1}{S}]$ as the ring of fractions $\frac{a}{b}$ where $a \in R$ and $b \in \langle S \rangle - \{0\}$ where $\left<S \right>$ is the ideal generated by $S$. We denote the subgroup of invertible $(n \times n)$-matrices with coefficients in a ring $R$ as $\GL(n,R)$.  

A $\mathbb{Q}$-defined \emph{linear algebraic group} $\textbf{G} \leq \GL(n,\mathbb{C})$ is an algebraic variety 
$$
\textbf{G} = V(F_1, \ldots, F_m) = \left\{ \left. X \in \mathbb{C}^{n^2} \: \right| \: F_i(X) = 0, i = 1, \ldots, m\right\},
$$ 
with $F_i$ rational polynomials that is also a group with matrix multiplication. Note that this set $\textbf{G}(\Q)$ of $\Q$-rational points lies as Zariski dense subset in $\textbf{G}$. In particular, the maps defining the group structure $\mu \colon \textbf{G} \times \textbf{G} \to \textbf{G}$ with $\mu(x,y) = xy$ and $\iota \colon x \to x^{-1}$ are algebraic. 
When given a $\mathbb{Q}$-defined linear algebraic group $\textbf{G}$, we denote the connected component of the identity as $\textbf{G}^\circ$. Given a subring of $\mathbb{C}$ and a $\mathbb{Q}$-defined linear algebraic group $\textbf{G} \leq \GL(n, \mathbb{C})$, we denote the group of $R$-points of $\textbf{G}$ as $\textbf{G}(R) = \textbf{G} \cap \GL(n,R)$.

A $\mathbb{Q}$-\emph{closed subgroup} $\textbf{H}$ of the $\mathbb{Q}$-defined linear algebraic group $\textbf{G}$ is a subgroup that is closed in the Zariski topology. There is a unique structure of an algebraic group on $\textbf{H}$ such that the inclusion map $\textbf{H} \to \textbf{G}$ is defined over $\mathbb{Q}$. Specifically, we have polynomials 
$$
F_1, F_2, \hdots F_{r_{\textbf{G}}}, F_{r_{\textbf{G} + 1}}, \hdots, F_{r_{\textbf{H}}}
$$
satisfying
$$
\textbf{H} = V(F_1, \ldots, F_{r_{\textbf{H}}}) \quad \text{ and } \quad \textbf{G} = V(F_1, \ldots, F_{r_{\textbf{G}}}).
$$

Let $\textbf{G}$ and $\textbf{H}$ be $\mathbb{Q}$-defined linear algebraic groups. A \emph{algebraic morphism of linear algebraic groups} $\Psi: \textbf{G} \to \textbf{H}$ is a group morphism which is also a morphism of $\mathbb{Q}$-varieties. The notions of isomorphism and automorphisms are then clear.
 As usual, we write $\mathbb{G}_a = \mathbb{C}$ with addition as its group operation and $\mathbb{G}_m = \mathbb{C}^*$ with its multiplicative structure. We note that quotients are defined in the category of $\mathbb{Q}$-defined linear algebraic groups. In particular, if $\textbf{N}$ is a $\mathbb{Q}$-closed normal subgroup of $\textbf{G}$, then $\textbf{G} / \textbf{N}$ is a $\mathbb{Q}$-defined linear algebraic group where the natural projection $\textbf{G} \to \textbf{G} / \textbf{N}$ is an algebraic morphism.
We say that $\textbf{G}$ is the \emph{almost direct product} of two Zariski closed subgroups $\textbf{H}$ and $\textbf{K}$ if $\textbf{H}$ and $\textbf{K}$ centralize each other, their intersection $\textbf{H} \cap \textbf{K}$ is finite, and if $\textbf{G} = \textbf{H} \cdot \textbf{K}$.

Given a subgroup $H \leq \textbf{G}(\Q)$, we denote its Zariski closure as $\overline{H}$ which is the smallest linear algebraic group defined over $\mathbb{Q}$ which contains $H$. Given not neccessarily Zariski closed subgroups $X,Y \leq \textbf{G}$, we have that if $X$ normalizes $Y$, then $\overline{X}$ normalizes $\overline{Y}$, and in particular, we have $\overline{[X,Y]} = [\overline{X}, \overline{Y}]$. Subsequently, we have  $\gamma_i(\overline{H}) = \overline{\gamma_i(H)}$ and $\overline{H^{(i)}} = (\overline{H})^{(i)}$. Thus, if $H$ is nilpotent of step length $c$, then $\overline{H}$ is nilpotent of step length $c$, and if $H$ is solvable of derived length $\ell$, then $\overline{H}$ is solvable of derived length $\ell$.

Let $\textbf{G}$ be a $\mathbb{Q}$-defined linear algebraic group. The \emph{unipotent radical} $\textbf{U}(\textbf{G})$ is the set of unipotent elements of the maximal closed connected normal solvable subgroup of $\textbf{G}$. It is always a Zariski closed subgroup which is defined over $\mathbb{Q}$. Most linear algebraic groups we consider will be virtually solvable, and in this context, we have that $\textbf{U}(\textbf{G})$ is the set of unipotent elements of $\textbf{G}$. Any surjective algebraic morphism $\varphi \colon \textbf{G} \to \textbf{H}$ of $\mathbb{Q}$-defined groups maps unipotent elements to unipotent elements, and subsequently, $\varphi(\textbf{U}(\textbf{G})) = \textbf{U}(\textbf{H})$. A Zariski closed subgroup of $\textbf{G}$ is a \emph{d-group} if its consists entirely of semisimple elements.  A \emph{torus} is a $\mathbb{Q}$-defined connected abelian linear algebraic group consisting of semisimple elements, so in particular if $\textbf{C}$ is a d-subgroup, then $\textbf{C}^\circ$ is a torus. If the unipotent radical of $\textbf{G}$ is trivial, then the group $\textbf{G}$ is called \emph{reductive}. When $\textbf{G}$ is virtually solvable and reductive, then the linear algebraic group is virtually a torus. A \emph{Levi subgroup} is a reductive linear algebraic subgroup $\textbf{L} \leq \textbf{G}$ such that $\textbf{G}$ is the semi-direct product of $\textbf{L}$ and $\textbf{U}(\textbf{G})$. 
We say a linear algebraic group $\textbf{G}$ has a \emph{strong unipotent radical} if the centralizer $C_{\textbf{G}}(\textbf{U}(\textbf{G}))$ of its unipotent radical $\textbf{U}(\textbf{G})$ is  contained  in $\textbf{U}(\textbf{G})$.

Every element $X \in \textbf{G}$ can be written as a product of a unipotent element $X_u$ and a semisimple element $X_s$ that commute. Moroever, if $\textbf{G}$ is a nilpotent $\mathbb{Q}$-defined linear algebraic group, then the maps $\pi_s, \pi_u: \textbf{G} \to \textbf{G}$ given by $\pi_s(X) = X_s$ and $\pi_u(X) = X_u$ are algebraic morphisms. In this case, the maximal torus of $\textbf{G}$ is unique and equal to the set of semisimple elements.

We now list some basic facts that can be found in \cite{borel,platonov_rapinchuk,springer}. Let $\textbf{G}$ be a $\mathbb{Q}$-defined linear algebraic group. 
\begin{enumerate}
\item[\textbf{AG1}] There exists a Levi subgroup $\textbf{L} \leq \textbf{G}$ such that $\textbf{G} = \textbf{U}(\textbf{G}) \cdot \textbf{L}$ where $\textbf{L} \cong \textbf{G} / \textbf{U}(\textbf{G})$. Indeed, we have $\textbf{G} \cong \textbf{U}(\textbf{G}) \rtimes \textbf{L}$. 

\item[\textbf{AG2}] Levi subgroups are conjugate by elements of $\textbf{U}(\textbf{G})$.

\item[\textbf{AG3}] Let $\textbf{T} \leq \textbf{G}$ be a torus. Then the centralizer $C_{\textbf{G}}(\textbf{T})$ of $\textbf{T}$ in $\textbf{G}$ has finite index in the normalizer $N_{\textbf{G}}(\textbf{T})$.
\end{enumerate}

The following facts hold for a $\mathbb{Q}$-defined, virtually solvable linear algebraic group $\textbf{H}$.
\begin{enumerate}
\item[\textbf{SG1}] There exist maximal tori $\textbf{T} \leq \textbf{H}$ that are $\mathbb{Q}$-closed.

\item[\textbf{SG2}] Let $\textbf{T} \leq \textbf{H}$ be a maximal torus. Then $\textbf{C} = N_{\textbf{H}}(\textbf{T})$ is $\mathbb{Q}$-closed, nilpotent and connected. Moreover, it contains a $d$-subgroup $\textbf{S}$ which is $\mathbb{Q}$-closed and satisfies $\textbf{S}^\circ = \textbf{T}$. Such a subgroup $\textbf{C}$ is called a Cartan subgroup.

\item[\textbf{SG3}] Let $\textbf{S}$ be a maximal $d$-subgroup of $\textbf{H}$. Then $\textbf{H} = \textbf{U}(\textbf{H}) \cdot \textbf{S} = \textbf{U}(\textbf{H}) \rtimes \textbf{S}.$

\item[\textbf{SG4}] All $\mathbb{Q}$-closed maximal tori and all $\mathbb{Q}$-closed maximal $d$-subgroups are conjugate by elements of $[\textbf{H}^\circ, \textbf{H}^\circ]$.

\item[\textbf{SG5}] Let $\textbf{C}$ be a Cartan subgroup of $\textbf{H}$ and $\textbf{F} \leq \textbf{U}(\textbf{H})$ a normal subgroup of $\textbf{H}$ which contains $[\textbf{H}^\circ, \textbf{H}^\circ]$. Then we have $\textbf{H} = \textbf{F} \cdot \textbf{C}$.
\end{enumerate}
Justification for all of these facts can be found in \cite{borel,platonov_rapinchuk,springer}. Note that \textbf{AG3} is sometimes called the rigidity of tori.

We now introduce the notion of a $S$-arithmetic subgroup of a $\mathbb{Q}$-defined linear algebraic group where $S$ is a finite set of integral primes.
\begin{defn}
Let $\textbf{G}$ be a $\mathbb{Q}$-defined linear algebraic group, and let $S$ be a finite set of integral primes. We say that a subgroup $G \leq \textbf{G}$ is \emph{$S$-arithmetic} if $G \cap \textbf{G}(\mathbb{Z}[\frac{1}{S}])$ has finite index in both $G$ and $\textbf{G}(\mathbb{Z}[\frac{1}{S}])$. We say an abstract group $G$ is \emph{$S$-arithmetic} if it has a finite index subgroup which is isomorphic to a $S$-arithmetic subgroup of a $\mathbb{Q}$-defined linear algebraic group. When $S$ is empty, we will say $G$ is \emph{arithmetic}. 
\end{defn}

Already having introduced the concept of $S$-arithmetic group, we now describe the behavior of $S$-arithmetic subgroups under algebraic morphisms. For this discussion, let $\rho \colon \textbf{A}_1 \to \textbf{A}_2$ be a surjective algebraic morphism between $\mathbb{Q}$-defined linear algebraic groups, and let $A \leq \textbf{A}_1$ be a subgroup. The following hold:

\begin{itemize}
\item[\textbf{AR1}]  If $A$ is an $S$-arithmetic subgroup of $\textbf{A}_1$, then $\rho(A)$ is a $S$-arithmetic subgroup of $\textbf{A}_2$.

\item[\textbf{AR2}] If $\rho$ is injective, then $A$ is a $S$-arithmetic in $\textbf{A}_1$ if and only if $\rho(A)$ is commensurable with $\textbf{A}_2(\mathbb{Z}[\frac{1}{S}])$.

\item[\textbf{AR3}] Every abelian subgroup of a $S$-arithmetic group has finite Hirsch length.
\item[\textbf{AR4}] Let $\textbf{A}$ be a $\mathbb{Q}$-defined linear algebraic group, and $\textbf{A}_1, \textbf{A}_2$ be $\mathbb{Q}$-defined linear algebraic groups such that $\textbf{A} = \textbf{A}_1 \rtimes \textbf{A}_2$. Let $A$ be a group satisfying the following. There exists subgroups $A_1, A_2 \leq A(\mathbb{Q})$ such that $A_1 \leq \textbf{A}_1$ and $A_2 \leq \textbf{A}_2(\mathbb{Q})$ are  $S$-arithmetic subgroups and where $A = A_1 \rtimes A_2$. Then $A$ is $S$-arithmetic in $\textbf{A}$.
\end{itemize}

We have that \textbf{AR1} is a consequence of \cite[Theorem 5.9]{platonov_rapinchuk} and that \textbf{AR2} is a natural consequence of \textbf{AR1}. Property \textbf{AR4} follows from \cite[Lemma 5.9]{platonov_rapinchuk}, whereas \textbf{AR3} is a consequence of the fact that the Hirsch length of an abelian subgroup is bounded by the dimension of its Zariski closure.

\section{Algebraic hulls for FAR-groups}\label{algebraic_hull_section}

Definition \ref{def:algebraichull} introduced the $\Q$-algebraic hull of a virtually FAR-group. This section is devoted to the proof of Theorem \ref{algebraic_hull_thm}. The proof is split in two parts: the first discussing the construction of the $\Q$-algebraic hull, and the second dealing with its uniqueness and other properties.

\subsection{Construction of the $\Q$-algebraic hull}

In order to construct the $\Q$-algebraic hull, we first construct algebraic groups satisfying condition $(ii)$ of Definition \ref{def:algebraichull}. Let $\textbf{G}$ be any virtually solvable $\mathbb{Q}$-defined linear algebraic group. Recall that $\textbf{G}$ has a strong unipotent radical if $C_{\textbf{G}}(\textbf{U}(\textbf{G}))$ lies in $\textbf{U}(\textbf{G})$. This is equivalent to saying that the only semisimple element of $\textbf{G}$ that centralizes $\textbf{U}(\textbf{G)}$ is the trivial one. The following lemma show that every $\mathbb{Q}$-defined virtually solvable group $\textbf{G}$ has a canonical largest quotient satisfying this property.

\begin{lemma}
	\label{quotientstrongunipotent}
	Let $\textbf{G}$ be a virtually solvable $\mathbb{Q}$-defined linear algebraic group. Take $\textbf{S}$ equal to the set of semisimple elements of $\textbf{G}$ that centralize the unipotent radical $\textbf{U}(\textbf{G})$. Then $\textbf{S}$ is a $\mathbb{Q}$-closed normal subgroup, and the quotient  $\textbf{G} / \textbf{S}$ is a $\mathbb{Q}$-defined linear algebraic group with a strong unipotent radical.
\end{lemma}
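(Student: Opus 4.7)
The plan is to identify $\textbf{S}$ with the unique maximal $d$-subgroup of the $\mathbb{Q}$-closed subgroup $\textbf{C} := C_{\textbf{G}}(\textbf{U}(\textbf{G}))$, and to deduce both claims from this description. Observe that $\textbf{C}$ is $\mathbb{Q}$-closed and, since $\textbf{U}(\textbf{G})$ is characteristic in $\textbf{G}$, characteristic (in particular normal) in $\textbf{G}$. As $\textbf{C}$ is virtually solvable, its unipotent radical $\textbf{U}(\textbf{C})$ equals the set of its unipotent elements, which are exactly the unipotent elements of $\textbf{G}$ centralizing $\textbf{U}(\textbf{G})$, namely $\textbf{U}(\textbf{C}) = Z(\textbf{U}(\textbf{G}))$.

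Choose a $\mathbb{Q}$-closed maximal $d$-subgroup $\textbf{D} \leq \textbf{C}$; applying \textbf{SG3} to $\textbf{C}$ yields $\textbf{C} = Z(\textbf{U}(\textbf{G})) \rtimes \textbf{D}$. Because $\textbf{D} \leq \textbf{C}$ centralizes all of $\textbf{U}(\textbf{G})$, and in particular $Z(\textbf{U}(\textbf{G}))$, this semidirect product is actually a direct product $\textbf{C} = Z(\textbf{U}(\textbf{G})) \times \textbf{D}$. A routine computation of the Jordan decomposition of $(z,d) \in Z(\textbf{U}(\textbf{G})) \times \textbf{D}$ (the two factors commute, one is unipotent, the other a $d$-group) then shows $(z,d)$ is semisimple if and only if $z = 1$. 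Hence $\textbf{S} = \textbf{D}$, which is $\mathbb{Q}$-closed and a subgroup. Since $\textbf{S}$ is visibly the unique maximal $d$-subgroup of $\textbf{C}$, it is characteristic in $\textbf{C}$, and therefore characteristic in $\textbf{G}$; in particular it is normal.

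For the second claim, let $\pi \colon \textbf{G} \to \textbf{G}/\textbf{S}$ be the quotient; the background gives $\textbf{U}(\textbf{G}/\textbf{S}) = \pi(\textbf{U}(\textbf{G}))$. Let $\bar g \in \textbf{G}/\textbf{S}$ be a semisimple element centralizing $\textbf{U}(\textbf{G}/\textbf{S})$. Since $\pi$ preserves Jordan decompositions and $\textbf{S}$ contains no nontrivial unipotent element, any preimage of $\bar g$ may be replaced by its semisimple part to produce a semisimple lift $g \in \textbf{G}$. For every $u \in \textbf{U}(\textbf{G})$ the commutator $[g,u]$ lies in $\textbf{U}(\textbf{G}) \cap \textbf{S} = \{1\}$, so $g \in \textbf{C}$; being semisimple, $g \in \textbf{S}$, so $\bar g = 1$, as required. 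The only genuinely delicate step in the plan is establishing the direct-product decomposition $\textbf{C} = Z(\textbf{U}(\textbf{G})) \times \textbf{D}$; once this is in place, the remaining assertions reduce to straightforward manipulations of Jordan decompositions.
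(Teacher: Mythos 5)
Your proof is correct and follows essentially the same route as the paper's: both identify $\textbf{S}$ as the reductive factor of $\textbf{C} = C_{\textbf{G}}(\textbf{U}(\textbf{G}))$, which splits off as a direct factor because the unipotent elements of $\textbf{C}$ are central in it, and then verify the strong unipotent radical condition on the quotient. The only differences are cosmetic: you invoke the maximal $d$-subgroup decomposition (\textbf{SG3}) where the paper uses the Levi decomposition of $\textbf{C}$, and your explicit lifting of semisimple elements of $\textbf{G}/\textbf{S}$ to semisimple elements of $\textbf{G}$ spells out a step the paper's argument leaves implicit.
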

\begin{proof}
	The centralizer $\textbf{C} = C_{\textbf{G}}(\textbf{U}(\textbf{G}))$ is a $\mathbb{Q}$-closed subgroup with unipotent radical equal to $\textbf{V} = \textbf{U}(\textbf{G}) \cap \textbf{C}$. As a Levi subgroup in $\textbf{C}$ is unique up to conjugation by unipotent elements and these commute with all elements in $\textbf{C}$, there exists a unique reductive subgroup $\textbf{L} \subset \textbf{C}$ such that $\textbf{C} = \textbf{L} \cdot \textbf{V}$, where $\textbf{L}$ and $\textbf{V}$ commute. This implies that $\textbf{L}$ consists exactly of the semisimple elements in $C_{\textbf{G}}(\textbf{U}(\textbf{G}))$, or thus that $\textbf{L} = \textbf{S}$ is a subgroup.
	
	Since $\textbf{C}$ is normal in $\textbf{G}$, it follows that $\textbf{S}$ as the set of semisimple elements in $\textbf{C}$ is again normal. Hence, we can take the quotient $\textbf{G} / \textbf{S}$ which is a $\mathbb{Q}$-defined linear algebraic group. Moreover, the kernel of the quotient map $\pi: \textbf{G} \to \textbf{G}/\textbf{S}$ consists of semisimple elements, and thus, $\textbf{U}(\textbf{G}/\textbf{S}) \cong \textbf{U}(\textbf{G})$ via the restriction of $\pi$. Any semisimple element $s \in \textbf{G}$ with $s \notin \textbf{S}$ acts non-trivally via conjugation on $\textbf{U}(\textbf{G})$, and thus,  $\pi(s) \neq 1$ also acts non-trivially on $\pi(\textbf{U}(\textbf{G})) = \textbf{U}(\textbf{G}/\textbf{S})$. Equivalently, the group $\textbf{G}/\textbf{S}$ has a strong unipotent radical.  
\end{proof}

\begin{prop}
	\label{prop:dim}
	Suppose that $\textbf{G}$ is a $\Q$-defined linear algebraic group and that $\Gamma \subset \textbf{G}(\Q)$ is a virtually solvable Zariski dense subgroup. Then $\dim(\!\textbf{U}(\textbf{G})) \leq h(\Gamma)$.  Moreover, if $\textbf{G}$ is unipotent, then equality holds.
\end{prop}
\noindent Note that by our assumptions, the group $\Gamma$ is a subset of the $\Q$-rational points of $\textbf{G}$, otherwise the proposition is false, for example by embedding $\Q^2$ as a Zariski-dense subgroup of $\mathbb{R}$.
\begin{proof}
	Note that both $\dim(\textbf{U}(\textbf{G}))$ and $h(\Gamma)$ do not change under taking finite index subgroups. Hence, we may assume that $\textbf{G}$ is connected by replacing $\textbf{G}$ by $\textbf{G}^\circ$ and $\Gamma$ by $\Gamma \cap \textbf{G}^\circ$, which are both of finite index in the original groups. Under these assumptions, the group $\textbf{G}$ and hence also $\Gamma$ is solvable. 
	
	We first prove the second statement, so we assume that $\textbf{G}$ is unipotent and in particular nilpotent, and show that $\dim(\textbf{G}) = h(\Gamma)$. We prove this via induction on $\dim(\textbf{G})$, where the case $\dim_{\mathbb{Q}}(\textbf{G}) = 0$ or equivalently $\textbf{G} = \{1\}$ is immediate. Now take any normal subgroup $N \triangleleft \Gamma$ with $\Gamma/N$ abelian and $h(N) = h(\Gamma) - 1$, which of course always exists as $\Gamma$ is nilpotent. Take $\overline{N}$ as the algebraic closure of $N$ in $\textbf{G}$, then via induction we know that $\dim(\overline{N}) = h(\Gamma) - 1$. As the abelian group $\Gamma / N$ lies Zariski-dense in the unipotent group $\textbf{G}/\overline{N}$, the latter is also abelian of dimension $1$ and thus isomorphic to $\mathbb{C}$. We conclude that the statement follows.
	
For the first statement, write $N = \Gamma \cap \textbf{U}(\textbf{G})$, which is a nilpotent normal subgroup of $\Gamma$. Moreover, as the quotient $\textbf{G} /  \textbf{U}(\textbf{G})$ is abelian, so is $\Gamma / N$ as the image of $\Gamma$ under the projection $\textbf{G} \to \textbf{G}/ \textbf{U}(\textbf{G})$. Note that $h(\Gamma) = h(\Gamma/N) + h(N)$ by definition of the Hirsch length. Now take $\overline{N}$ as the algebraic closure of $N$ in $\textbf{U}(\textbf{G})$, then by the unipotent case, we find that $\dim_\Q(\overline{N}) = h(N)$. The group $\textbf{G} / \overline{N}$ is again abelian as it contains the group $\Gamma / N$ as a Zariski-dense subgroup. Consider the natural projection $\pi_u: \textbf{G}/\overline{N} \to \textbf{U}(\textbf{G}/\overline{N}) = \textbf{U}(\textbf{G}) / \overline{N}$, then $\pi_u(\Gamma/N)$ is still Zariski-dense in $ \textbf{U}(\textbf{G})/\overline{N}$. In particular, we get that $$\dim_\Q( \textbf{U}(\textbf{G})/\overline{N}) = h(\pi_u(\Gamma/ N)) \leq h(\Gamma/N)$$ and thus also $\dim_\Q( \textbf{U}(\textbf{G})) \leq h(\Gamma)$. 
\end{proof}

This motivates the following definition.

\begin{defn}
	Let $\Gamma$ be a  virtually FAR-group. We call a representation $\rho: \Gamma \to \textbf{H}$ to a $\mathbb{Q}$-defined linear algebraic group $\textbf{H}$ \emph{full} if $\Gamma$ if $\dim_{\mathbb{Q}}( \textbf{U}(\textbf{G})) = h(\Gamma)$, where $\textbf{G}$ is the Zariski-closure of $\rho(\Gamma)$ in $\textbf{H}$.
\end{defn}

For abelian groups, the existence of such a representation is automatic from the definition. 
\begin{lemma}
	\label{ex:ab} 
	Every torsion-free abelian FAR-group $\Gamma$ has an injective full representation. Moreover, we can assume that $\rho(\Gamma)$ lies in $\textbf{H}(\Z[\frac{1}{S}])$, where $S$ is the spectrum of $\Gamma$.
\end{lemma}

\begin{proof}
	This is immediate, as such a group $A$ is a subgroup of $A \otimes \mathbb{Q} \approx \mathbb{Q}^k$ for $k = h(A)$ and the latter has a faithful unipotent representation. The latter statement follows from the definition of the spectrum.
\end{proof}

The existence of $\mathbb{Q}$-algebraic hulls for virtually FAR-groups with no torsion normal subgroups is a consequence of the previous results.

\begin{thm}\label{algebraic_hull}
	If $\Gamma$ is a  virtually FAR \emph{WTN}-group with spectrum $S$, then $\Gamma$ has a $\mathbb{Q}$-algebraic hull.
\end{thm}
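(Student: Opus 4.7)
The plan is to construct the algebraic hull in two main stages: first build a faithful \emph{full} representation of $\Gamma$ into an ambient $\Q$-defined linear algebraic group satisfying conditions (i) and (iii) of Definition~\ref{def:algebraichull}, and then apply Lemma~\ref{quotientstrongunipotent} to quotient out the semisimple centralizer of the unipotent radical, yielding condition (ii) as well. Along the way, the reduction to the torsion-free case is essential: by Lemma~\ref{lem:equivalencesRF}, the WTN hypothesis gives a finite index torsion-free normal subgroup $\Gamma_0 \trianglelefteq \Gamma$, so it suffices to construct the hull $(\textbf{H}_{\Gamma_0}, \iota_0)$ for $\Gamma_0$; the hull of $\Gamma$ is then obtained as a finite extension $\textbf{H}_{\Gamma_0} \rtimes F$ with $F = \Gamma/\Gamma_0$ acting on $\textbf{H}_{\Gamma_0}$ via conjugation on $\Gamma_0$, followed by a final application of Lemma~\ref{quotientstrongunipotent}.

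For the core construction applied to torsion-free $\Gamma_0$, set $N = \Fitt(\Gamma_0)$, which is a torsion-free nilpotent FAR-group whose spectrum is contained in $S$. Extending the classical Mal'tsev theory to the FAR setting (the abelian base case being Lemma~\ref{ex:ab}, with the inductive step handled by Baker--Campbell--Hausdorff along the lower central series), embed $N$ as a Zariski dense subgroup of a $\Q$-defined unipotent algebraic group $\textbf{U}_0$ of dimension $h(N)$ with image in $\textbf{U}_0(\Z[\frac{1}{S}])$. The abelian FAR quotient $A = \Gamma_0/N$ likewise embeds via Lemma~\ref{ex:ab} into a unipotent group $\textbf{V} \cong \mathbb{G}_a^{h(A)}$ over $\Z[\frac{1}{S}]$. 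The conjugation action $\Gamma_0 \to \Aut(N)$ extends algebraically to $\Aut(\textbf{U}_0)$ and, after taking semisimple parts, factors through a $\Q$-defined torus $\textbf{T}$ via a map $A \to \textbf{T}$ with image in $\textbf{T}(\Z[\frac{1}{S}])$. The ambient algebraic group is then $\widetilde{\textbf{H}} = \textbf{U}_0 \rtimes (\textbf{V} \times \textbf{T})$, where $\textbf{V} \times \textbf{T}$ acts on $\textbf{U}_0$ only through its $\textbf{T}$-factor, and a set-theoretic section $A \to \Gamma_0$ combined with the Mal'tsev embedding on $N$ produces a faithful morphism $\iota_0 \colon \Gamma_0 \hookrightarrow \widetilde{\textbf{H}}(\Z[\frac{1}{S}])$ with Zariski-dense image satisfying $\dim \textbf{U}(\widetilde{\textbf{H}}) = h(N) + h(A) = h(\Gamma_0)$ by Proposition~\ref{prop:dim}.

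The principal obstacle is verifying that applying Lemma~\ref{quotientstrongunipotent} to $\widetilde{\textbf{H}}$ preserves injectivity of $\iota_0$. An element $g \in \Gamma_0$ mapping to the identity in the quotient must be semisimple and centralize $\textbf{U}_0$; its component in the unipotent factor $\textbf{V}$ is then forced to be trivial, so $g \in N$, and Mal'tsev injectivity gives $g = 1$. This is precisely the reason for introducing the auxiliary unipotent factor $\textbf{V}$ alongside $\textbf{T}$: without it, elements of $A$ acting semisimply on $N$ would be annihilated. The other technical burden is controlling denominators; here the spectrum $S$ is tailor-made for the job, since its definition identifies exactly the primes that arise in quasicyclic sections of $\Gamma$, and these are precisely the primes needed in the denominators of the Mal'tsev embeddings of abelian sections, from which the full denominator control propagates via Baker--Campbell--Hausdorff to $\textbf{U}_0$ and via the rationality of eigenvalues of the $A$-action on $N \otimes \Q$ to the map $A \to \textbf{T}$.
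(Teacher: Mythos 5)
Your overall architecture (build a full faithful representation, then apply Lemma~\ref{quotientstrongunipotent} and check injectivity survives the quotient) matches the paper, and your injectivity argument for the final quotient is essentially the right one. But there is a genuine gap at the heart of the construction: the map $\iota_0 \colon \Gamma_0 \to \widetilde{\textbf{H}} = \textbf{U}_0 \rtimes (\textbf{V} \times \textbf{T})$ that you define via a set-theoretic section $A \to \Gamma_0$ is not a group homomorphism in general. Two things go wrong. First, since $\textbf{V} \times \textbf{T}$ acts on $\textbf{U}_0$ only through $\textbf{T}$, conjugation by $\iota_0(g)$ on $\iota_0(N)$ realizes only the \emph{semisimple part} of the conjugation action of $g$ on $N$ (the $\textbf{U}_0$- and $\textbf{V}$-components of $\iota_0(g)$ contribute nothing new when, say, $\textbf{U}_0$ is abelian). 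If the action of $\Gamma_0/\Fitt(\Gamma_0)$ on $\Fitt(\Gamma_0)$ has a nontrivial unipotent part --- e.g.\ $\Gamma_0 = \Z^4 \rtimes_M \Z$ with $M = M_sM_u$, $M_u \neq 1$, $M_s$ of infinite order --- then $\iota_0(gng^{-1}) \neq \iota_0(g)\iota_0(n)\iota_0(g)^{-1}$ and the construction fails outright. Second, even when the action is semisimple, the extension $1 \to N \to \Gamma_0 \to A \to 1$ need not split, so the set-theoretic section introduces a $2$-cocycle that your formula does not absorb. Repairing this requires either a semisimple-splitting/nilpotent-shadow argument (enlarging $\textbf{U}_0$ to a unipotent group that also realizes the unipotent parts of the action) or a cohomological vanishing statement; neither is supplied, and neither is routine. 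There is also a smaller unaddressed point: $\Gamma_0/\Fitt(\Gamma_0)$ is only \emph{virtually} torsion-free abelian, so a further finite-index reduction is needed before Lemma~\ref{ex:ab} applies to $A$.

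The paper circumvents all of this by not building the hull from $\Fitt(\Gamma)$ and the quotient at all. It starts from an arbitrary faithful representation $\rho_1 \colon \Gamma \to \GL(n,\Z[\tfrac{1}{S}])$ (available by Lemma~\ref{lem:equivalencesRF}), sets $N = \Gamma \cap \textbf{U}(\textbf{H}_1)$ where $\textbf{H}_1 = \overline{\rho_1(\Gamma)}$, observes that $\Gamma/N$ is torsion-free abelian, and takes the \emph{direct sum} $\rho = \rho_1 \oplus (\rho_2 \circ \pi)$ with $\rho_2$ a full representation of $\Gamma/N$ from Lemma~\ref{ex:ab}. Because the second summand factors through the quotient map $\pi$, it is automatically a homomorphism --- no section, no cocycle, no issue with the unipotent part of the action --- and Proposition~\ref{prop:dim} shows the unipotent radical of $\overline{\rho(\Gamma)}$ has dimension exactly $h(\Gamma)$. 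I would recommend restructuring your argument along these lines: keep your final two steps (Lemma~\ref{quotientstrongunipotent} plus the torsion-kernel argument), but replace the explicit semidirect-product model by the direct-sum trick.
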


\begin{proof}
	We start by showing that there exists an injective full representation of $\Gamma$ with image in $\textbf{H}(\Z[\frac{1}{S}])$. Note that it suffices to show this for a finite index subgroup of $\Gamma$. Indeed, if $\rho: \Gamma_0 \to \GL(n, \mathbb{Z}[\frac{1}{S}])$ is a full representation, then the induced representation $\tilde{\rho} \colon \Gamma \to \overline{\tilde{\rho}(\Gamma)} \leq \GL(nd, \mathbb{Z}[\frac{1}{S}])$ will be full as well where $d = |\Gamma : \Gamma_0|$. So from now on, we can assume that $\Gamma / \Fitt(\Gamma)$ is abelian by replacing $\Gamma$ by a finite index subgroup if necessary. 
	
	We know that the group $\Gamma$ is linear over $\mathbb{Z}[\frac{1}{S}]$ by Lemma \ref{lem:equivalencesRF}, meaning that there exists an injective map $$\rho_1: \Gamma \to \GL\left(n,\mathbb{Z}\left[\frac{1}{S}\right]\right)$$ for some $n > 0$.  Now take the Zariski closure $\textbf{H}_1 \subset \GL(n,\mathbb{Q})$ of $\rho_1(\Gamma)$, so $\textbf{H}_1$ is a $\mathbb{Q}$-defined linear algebraic group. Similarly as above, we can assume that $\textbf{H}_1$ is connected. As $\Gamma$ is solvable, so is $\textbf{H}_1$, and moreover, $\dim_\Q(\textbf{U}(\textbf{H}_1)) \leq h(\Gamma)$. 
	
	Take the nilpotent normal subgroup $N = \Gamma \cap \textbf{U}(\textbf{H})$ of $\Gamma$, which thus lies in the Fitting subgroup $\Fitt(\Gamma)$. Since the Zariski-closure $\overline{N}$ of $N$ lies in $\textbf{U}(\textbf{H})$, Proposition \ref{prop:dim} implies $\dim_\Q(\overline{N}) = h(N)$. Now consider the group $\Gamma / N$, which is a group of rank $h(\Gamma)- h(N)$. By considering this group as a subgroup of $\textbf{H} / \textbf{U}(\textbf{H})$, we see that $\Gamma / N$ is abelian. Therefore, we can use Lemma \ref{ex:ab} on the torsion-free quotient of $\Gamma / N$ to find a map $$\rho_2: \Gamma / N \to \GL\left(m,\mathbb{Z}\left[\frac{1}{S}\right]\right)$$ that is full. Letting $\pi \colon \Gamma \to \Gamma/ N$ to be the natural projection, the injective morphism $$\rho \colon \Gamma \to \GL\left(n, \mathbb{Z}\left[\frac{1}{S}\right]\right) \oplus \GL\left(m, \mathbb{Z}\left[\frac{1}{S} \right]\right)$$ given by $\rho(x) = (\rho_1(x), \rho_2(\pi(x)))$ is the desired representation that is full, as a direct argument shows.

	Now we use the full representation to prove the theorem. Note that we can apply Lemma \ref{quotientstrongunipotent} to $\overline{\rho(\Gamma)}$ to find the $\mathbb{Q}$-algebraic hull $\textbf{H}_\Gamma = \overline{\rho(\Gamma)}/\textbf{S}$ and the natural map $\iota: \Gamma \to \textbf{H}_\Gamma$ where $\textbf{S}$ is the set of semisimple elements of $\overline{\rho(\Gamma)}$ that centralize $\textbf{U}(\overline{\rho(\Gamma)})$. As the other properties are immediate, we are left to show that $\iota$ remains injective. However, note that $h(\Gamma) = \dim_{\mathbb{Q}}(\textbf{U}(\textbf{H}_\Gamma))$ and hence that the kernel of this map satisfies $h(\ker(\iota)) = 0$ by Proposition \ref{prop:dim}. In particular, $\ker(\iota)$ is a torsion normal subgroup of $\Gamma$, and hence trivial by the assumption. 
\end{proof}

Finally, we show that the conditions of the previous theorem are necessary for the existence of a $\Q$-algebraic hull.
\begin{thm}
	A  virtually FAR-group $\Gamma$ has a $\mathbb{Q}$-algebraic hull if and only $\tau(\Gamma) = 1$, so if and only if $\Gamma$ is a \emph{WTN}-group. 
\end{thm}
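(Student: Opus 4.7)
The $(\Leftarrow)$ direction is precisely Theorem~\ref{algebraic_hull}, so the plan focuses entirely on the $(\Rightarrow)$ direction. Given a $\Q$-algebraic hull $(\textbf{H}_\Gamma, \iota)$ of $\Gamma$, the aim is to show that $\iota(\tau(\Gamma)) = 1$, after which injectivity of $\iota$ immediately yields $\tau(\Gamma) = 1$. The key levers are condition $(ii)$ of Definition~\ref{def:algebraichull} (the strong unipotent radical property) and the fact that in characteristic zero, torsion elements in $\GL(n,\Q)$ are semisimple and have uniformly bounded order.

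First I would show that $\iota(\tau(\Gamma))$ is finite. Embedding $\textbf{H}_\Gamma \leq \GL(n,\Q)$, every element of $\iota(\tau(\Gamma))$ is torsion in $\GL(n,\Q)$, and a standard bound using the degrees of the cyclotomic polynomials provides a uniform bound $N$ on the orders of such elements. Thus $\iota(\tau(\Gamma))$ satisfies the polynomial identity $x^N = 1$, which then holds throughout its Zariski closure. In characteristic zero, a connected $\Q$-defined linear algebraic group whose elements all satisfy $x^N = 1$ must be trivial, since its unipotent radical is torsion-free and any nontrivial torus admits elements of arbitrary order. Therefore $\overline{\iota(\tau(\Gamma))}^\circ = \{1\}$, so $\overline{\iota(\tau(\Gamma))}$ is finite and coincides with $\iota(\tau(\Gamma))$.

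Since $\tau(\Gamma) \triangleleft \Gamma$, the finite group $\iota(\tau(\Gamma))$ is normalized by the Zariski-dense subgroup $\iota(\Gamma)$ and hence by all of $\textbf{H}_\Gamma$. The conjugation action of the connected group $\textbf{H}_\Gamma^\circ$ on the finite group $\iota(\tau(\Gamma))$ factors through $\Aut(\iota(\tau(\Gamma)))$, which is finite, so the action is trivial. In particular $\textbf{U}(\textbf{H}_\Gamma) \leq \textbf{H}_\Gamma^\circ$ centralizes $\iota(\tau(\Gamma))$, and the strong unipotent radical condition $(ii)$ then forces $\iota(\tau(\Gamma)) \leq \textbf{U}(\textbf{H}_\Gamma)$. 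But every element of $\iota(\tau(\Gamma))$ is semisimple (being torsion in characteristic zero) while $\textbf{U}(\textbf{H}_\Gamma)$ consists of unipotent elements, so $\iota(\tau(\Gamma)) = \{1\}$.

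The main obstacle is the finiteness step: both that torsion in $\GL(n,\Q)$ has bounded order (via the cyclotomic polynomial degree bound) and that this forces the Zariski closure to be zero-dimensional (via the absence of bounded-exponent nontrivial connected algebraic groups in characteristic zero) have to be assembled carefully. Once $\iota(\tau(\Gamma))$ is established to be finite, the remainder of the argument is essentially formal from the defining properties of the $\Q$-algebraic hull.
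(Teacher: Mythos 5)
Your argument is correct, and it reaches the same endgame as the paper --- namely, that $T = \iota(\tau(\Gamma))$ is centralized by $\textbf{U}(\textbf{H}_\Gamma)$, hence lies in $C_{\textbf{H}_\Gamma}(\textbf{U}(\textbf{H}_\Gamma)) \leq \textbf{U}(\textbf{H}_\Gamma)$ by condition $(ii)$, and is therefore trivial because torsion elements in characteristic zero are semisimple --- but you get to that centralization by a genuinely different route. The paper keeps open the possibility that $T$ is infinite: it observes that $T$ is locally finite, invokes the Jordan--Zassenhaus theorem to conclude $T$ is virtually abelian, deduces that $\overline{T}$ is virtually a torus, and then uses the rigidity of tori (property \textbf{AG3}) to show the connected group $\textbf{U}(\textbf{H}_\Gamma)$ acts trivially on $\overline{T}$ by conjugation. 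You instead prove outright that $T$ is finite, using Minkowski's bound on the order of torsion elements of $\GL(n,\mathbb{Q})$ to get bounded exponent, passing to the Zariski closure, and killing the identity component; triviality of the conjugation action of $\textbf{H}_\Gamma^\circ$ then follows from the elementary fact that a morphism from a connected variety to a finite set is constant. Your version avoids both Jordan--Zassenhaus and the rigidity of tori, at the cost of leaning on rationality of the matrix entries (guaranteed by condition $(i)$ of Definition \ref{def:algebraichull}, but the bounded-torsion step would fail for subgroups of $\GL(n,\mathbb{C})$, where the paper's Jordan-type argument still works). One very minor point: when you argue that a nontrivial connected bounded-exponent group must contain a $\mathbb{G}_a$ or a nontrivial torus, you should note that in the reductive case a nontrivial connected reductive group always contains a nontrivial maximal torus --- immediate here anyway since $\textbf{H}_\Gamma$ is virtually solvable, so $\overline{T}^\circ$ reductive forces it to be a torus. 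As a byproduct your first step reproves that $\tau(\Gamma)$ is finite, which is also available from Lemma \ref{lem:equivalencesRF} since the hull makes $\Gamma$ linear over $\mathbb{Z}[\frac{1}{S}]$.
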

\begin{proof}
	Note that we have shown in the theorem above that $\tau(\Gamma) = 1$ is a sufficient condition. Conversely, assume that $\Gamma$ is a group and $\iota: \Gamma \to \textbf{H}_\Gamma$ is its $\mathbb{Q}$-algebraic hull. Write $T = \iota(\tau(\Gamma))$, which is the maximal torsion normal subgroup of the image, and $T$ is locally finite under our assumptions. By the Jordan-Zassenhaus theorem, the group $T$ is virtually abelian, and since $T$ is torsion, it consists of semisimple elements. Hence, its algebraic closure $\overline{T}$ is virtually a torus. The group $\textbf{U}(\textbf{H}_\Gamma)$ acts on $\overline{T}$ via conjugation, and by the rigidity of tori, this action must be trivial. In particular, $T \leq C_{\textbf{H}_\Gamma}(\textbf{U}(\textbf{H}_\Gamma))$ and thus must be trivial.
\end{proof}

\subsection{Properties of the $\Q$-algebraic hull}\label{properties_Q_hull}

The following result is key to showing  the $\Q$-algebraic hull of a virtually FAR-group $\Gamma$ is a universal construction in the category of $\mathbb{Q}$-defined linear algebraic groups. 

\begin{prop}\label{isomorphism_hull}
	Let $(\textbf{H}_1, \iota_1)$ and $(\textbf{H}_2, \iota_2)$ be $\Q$-algebraic hulls for  virtually FAR-groups $\Gamma_1$ and $\Gamma_2$, respectively, and $\varphi: \Gamma_1 \to \Gamma_2$ a surjective morphism. There exists a unique algebraic morphism $\Psi \colon \textbf{H}_1 \to \textbf{H}_2$ such that $\iota_2 \circ \varphi = \Psi \circ \iota_1$.
\end{prop}
\begin{proof}
	Consider the subgroup
	$$
	\Delta = \left\{ \left(\iota_1(x), \iota_2(\varphi(x))\right) \: | \: x \in \Gamma_1 \right\} \subset \textbf{H}_1 \times \textbf{H}_2,
	$$
which is isomorphic to $\Gamma_1$ as $\iota_1$ is injective. In particular, we have $h(\Delta) = h(\Gamma_1)$. Denote by $\overline{\Delta}$ the Zariski closure of $\Delta$ in the product of algebraic groups $\textbf{H}_1 \times \textbf{H}_2$. Let $\pi_i \colon \textbf{H}_1 \times \textbf{H}_2 \to \textbf{H}_i$ be the natural projections where $i \in \{1,2\}$, and let $\alpha_i$ to be the restriction of $\pi_i$ to the subgroup $\overline{\Delta}$. Note that $\alpha_1(\Delta) = \iota_1(\Gamma_1)$ and $\alpha_2(\Delta) = \iota_2(\varphi(\Gamma_1)) = \iota_2(\Gamma_2)$ are both Zariski dense in their image, and thus, the maps $\alpha_i$ are surjective.
	
	We first show that the map $\alpha_1$ is an isomorphism of algebraic groups. Consider $\textbf{U}(\overline{\Delta})$ the unipotent radical of $\overline{\Delta}$. As $\alpha_1$ is surjective, we get $\alpha_1(\textbf{U}(\overline{\Delta})) = \textbf{U}(\textbf{H}_1)$ and the same holds for $\alpha_2$. By comparing the dimensions, we find that
	$$
h(\Gamma_1) = \dim_\Q(\textbf{U}(\textbf{H}_1)) \leq \dim_\Q(\textbf{U}(\overline{\Delta})) \leq h(\Delta) = h(\Gamma_1),
	$$ 
	and therefore, 
	$$
	\dim_\Q(\textbf{U}(\textbf{H}_1)) = \dim_\Q(\textbf{U}(\overline{\Delta})).
	$$
	Hence, $\alpha_1$ is an isomorphism when restricted to $\textbf{U}(\overline{\Delta})$. Thus, the subgroup $\ker( \alpha_1)$ consists entirely of semisimple elements. 
	
	To show that the kernel is trivial, take any $x \in \ker(\alpha_1)$. For every $y \in \textbf{U}(\overline{\Delta})$, we have 
	$$
	\alpha_1(xyx^{-1}) = \alpha_1(x) \alpha_1(y) \alpha(x)^{-1} = \alpha_1(y).
	$$
	Since $\alpha_1$ maps $\textbf{U}(\overline{\Delta})$ isomorphically to $\textbf{U}(\textbf{H}_1)$, we have $xyx^{-1} = y$. Hence, if $x \in \ker(\alpha_1)$, then $x$ centralizes $\textbf{U}(\overline{\Delta})$. Thus, $\alpha_2(x)$ also centralizes $\textbf{U}(\textbf{H}_2) = \alpha_2(\textbf{U}(\overline{\Delta}))$, and since $\textbf{H}_2$ has a strong unipotent radical, we have $\alpha_2(x) \in \textbf{U}(\textbf{H}_1)$. Since $x$ is semisimple and thus also $\alpha_2(x)$, we find that $\alpha_2(x) = 1$. In combination with $\alpha_1(x) = 1$, this implies that $x = 1$. Since $\ker(\alpha_1)$ is trivial and $\alpha_1$ is a surjective algebraic morphism, we have $\alpha_1$ is an isomorphism defined over $\Q$ by \cite[Section 6.1.]{borel}. 
	
	Now, consider the map $\Psi = \alpha_2 \circ \alpha_1^{-1}: \textbf{H}_1 \to \textbf{H}_2$, which is a morphism defined over $\mathbb{Q}$. By construction, it satisfies $\Psi \circ \iota_1 = \iota_2 \circ \varphi$ and thus is the map we needed.
\end{proof}

\begin{cor}\label{unipotent_on_fitting_subgroup}
	The $\Q$-algebraic hull $\textbf{H}$ of $\Gamma$ is unique up to an isomorphism over $\mathbb{Q}$ of algebraic groups. In particular, every automorphism of $\Gamma$ extends uniquely to an algebraic automorphism of $\textbf{H}$.
\end{cor}

For finite index subgroups of  virtually  FAR-groups, it is easy to determine the $\mathbb{Q}$-algebraic hull.
\begin{lemma}
	\label{fihull}
	Let $\Gamma^\prime$ be a finite index subgroup of $\Gamma$ with $(\textbf{H}_\Gamma, \iota)$ being the $\mathbb{Q}$-algebraic hull of $\Gamma$. The $\Q$-algebraic hull of $\Gamma^\prime$ is equal to the $\iota^\prime: \Gamma^\prime \to \overline{\iota(\Gamma^\prime)}$. In particular, if $\textbf{H}_\Gamma$ is connected, then the $\mathbb{Q}$-algebraic hull of $\Gamma'$ is given by $(\textbf{H}_\Gamma, \restr{\iota}{\Gamma'})$. 
\end{lemma}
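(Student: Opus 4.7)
My plan is to directly verify the three conditions of Definition \ref{def:algebraichull} for the pair $(\overline{\iota(\Gamma')}, \iota')$, where $\iota' = \restr{\iota}{\Gamma'}$. The essential observation that drives everything is that $\overline{\iota(\Gamma')}$ is a $\Q$-closed subgroup of finite index in $\textbf{H}_\Gamma$, and therefore contains the connected component $\textbf{H}_\Gamma^\circ$.

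To establish this finite-index property, I would write $\Gamma = \bigsqcup_{i=1}^{k} \Gamma' g_i$ for finitely many coset representatives, so that $\iota(\Gamma) = \bigsqcup_{i=1}^{k} \iota(\Gamma') \iota(g_i)$. Since the Zariski closure of a finite union is the union of Zariski closures, we obtain $\textbf{H}_\Gamma = \overline{\iota(\Gamma)} = \bigcup_{i=1}^{k} \overline{\iota(\Gamma')}\, \iota(g_i)$, which forces $[\textbf{H}_\Gamma : \overline{\iota(\Gamma')}] \leq k$ and hence $\overline{\iota(\Gamma')} \supseteq \textbf{H}_\Gamma^\circ$. Since the unipotent radical depends only on the identity component, this immediately gives $\textbf{U}(\overline{\iota(\Gamma')}) = \textbf{U}(\textbf{H}_\Gamma)$.

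Now I verify each axiom. Condition $(i)$: Zariski-density of $\iota'(\Gamma')$ in $\overline{\iota(\Gamma')}$ is immediate by construction, and the containment $\iota'(\Gamma') \subseteq \iota(\Gamma) \subseteq \textbf{H}_\Gamma(\Z[\tfrac{1}{S}])$ handles the integrality once I argue that the spectrum $S'$ of $\Gamma'$ equals $S$. For the latter, $S' \subseteq S$ is clear from the definition, and for the reverse inclusion, given a $p$-quasicyclic quotient $M/N$ in $\Gamma$, the groups $M' = M \cap \Gamma'$ and $N' = N \cap \Gamma'$ yield $M'/N' \hookrightarrow M/N$ as a finite index subgroup; since $\mathbb{Z}(p^\infty)$ admits no proper finite-index subgroups, $M'/N' \cong M/N$, so $p \in S'$. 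Condition $(ii)$: any element of $\overline{\iota(\Gamma')}$ centralizing $\textbf{U}(\overline{\iota(\Gamma')}) = \textbf{U}(\textbf{H}_\Gamma)$ lies in $\textbf{U}(\textbf{H}_\Gamma) = \textbf{U}(\overline{\iota(\Gamma')})$ by the strong unipotent radical property already established for $\textbf{H}_\Gamma$. Condition $(iii)$: $\dim_\Q(\textbf{U}(\overline{\iota(\Gamma')})) = \dim_\Q(\textbf{U}(\textbf{H}_\Gamma)) = h(\Gamma) = h(\Gamma')$, using that Hirsch length is invariant under finite-index subgroups.

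For the final statement, if $\textbf{H}_\Gamma$ is connected then $\overline{\iota(\Gamma')} \supseteq \textbf{H}_\Gamma^\circ = \textbf{H}_\Gamma$, forcing equality. I expect no serious obstacle here: the only delicate point is the equality of spectra, and once that is handled, the rest is a straightforward unpacking of the fact that passing to a finite-index subgroup on the group side corresponds to passing to a finite-index closed subgroup on the algebraic side, and neither the unipotent radical nor the Hirsch length sees this passage.
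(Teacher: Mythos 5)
Your proposal is correct and follows essentially the same route as the paper: verifying conditions (i)--(iii) of Definition \ref{def:algebraichull} directly, with condition (iii) resting on $h(\Gamma) = h(\Gamma')$ and the connected case following from $\overline{\iota(\Gamma')}$ having finite index in $\textbf{H}_\Gamma$. You supply more detail than the paper does (notably the equality of spectra and the finite-union-of-closures argument), but the underlying argument is the same.
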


\begin{proof}
Conditions $(i)$ and $(ii)$ follow immediately and condition $(iii)$ follows from the fact that $h(\Gamma) = h(\Gamma^\prime)$. For the second part, note that $\overline{\iota(\Gamma^\prime)}$ is a finite index subgroup of $\textbf{H}_\Gamma$ and hence equal to $\textbf{H}_\Gamma$. 
\end{proof}

The previous lemma will be convenient to study injective morphisms. Indeed, in the following proposition, we show how injectivity implies that the image always has finite index. 

\begin{prop}\label{injfi}
	Let $\Gamma$ be a virtually FAR-group. If $\varphi: \Gamma \to \Gamma$ is an injective morphism, then $\varphi(\Gamma)$ has finite index in $\Gamma$.
\end{prop}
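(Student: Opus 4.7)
The strategy is to extend $\varphi$ to an algebraic endomorphism of the $\mathbb{Q}$-algebraic hull, show it must be an algebraic automorphism by injectivity and dimension, and then deduce finite index from the fact that algebraic automorphisms preserve the arithmetic structure up to commensurability. First I would reduce to the case where $\Gamma$ is a WTN FAR-group: the maximal normal torsion subgroup $\tau(\Gamma)$ is characteristic and satisfies $\varphi^{-1}(\tau(\Gamma)) = \tau(\Gamma)$ (since $\varphi$ is injective and torsion-preserving), so $\varphi$ induces an injective endomorphism on $\Gamma/\tau(\Gamma)$, and a finite-index statement on this quotient lifts back to $\Gamma$ using the Chernikov nature of $\tau(\Gamma)$. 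In the WTN case, Theorem \ref{algebraic_hull} provides the $\Q$-algebraic hull $(\textbf{H}_\Gamma, \iota)$.

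Since $\varphi \colon \Gamma \to \varphi(\Gamma)$ is a group isomorphism, Proposition \ref{isomorphism_hull} produces an algebraic isomorphism $\Phi \colon \textbf{H}_\Gamma \to \textbf{H}_{\varphi(\Gamma)}$. To convert this into an endomorphism of $\textbf{H}_\Gamma$ itself, I form the graph
$$\Delta = \bigl\{(\iota_{\varphi(\Gamma)}(y), \iota(y)) : y \in \varphi(\Gamma)\bigr\} \subseteq \textbf{H}_{\varphi(\Gamma)} \times \textbf{H}_\Gamma,$$
take its Zariski closure $\overline{\Delta}$, and consider the projections $\pi_1 \colon \overline{\Delta} \to \textbf{H}_{\varphi(\Gamma)}$ (surjective since $\iota_{\varphi(\Gamma)}(\varphi(\Gamma))$ is Zariski dense in its hull) and $\pi_2 \colon \overline{\Delta} \to \textbf{K}$, where $\textbf{K} \leq \textbf{H}_\Gamma$ is the Zariski closure of $\iota(\varphi(\Gamma))$. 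Mimicking the diagonal argument from the proof of Proposition \ref{isomorphism_hull}: Proposition \ref{prop:dim} together with the surjectivity of $\pi_1$ forces $\dim_\Q \textbf{U}(\overline{\Delta}) = h(\varphi(\Gamma)) = h(\Gamma)$ with $\pi_1|_{\textbf{U}(\overline{\Delta})}$ an algebraic isomorphism; then $\ker(\pi_1)$ consists of semisimple elements centralizing $\textbf{U}(\overline{\Delta})$, and the strong unipotent radical of $\textbf{H}_\Gamma$ forces $\pi_2(\ker \pi_1) = 1$ and hence $\ker(\pi_1) = 1$. Thus $\pi_1$ is an algebraic isomorphism and $\tilde{\varphi} := \pi_2 \circ \pi_1^{-1} \circ \Phi \colon \textbf{H}_\Gamma \to \textbf{H}_\Gamma$ is an algebraic endomorphism extending $\iota \circ \varphi$.

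Injectivity of $\varphi$ together with the equality $\dim_\Q \textbf{U}(\textbf{H}_\Gamma) = h(\Gamma) = h(\varphi(\Gamma))$ forces $\tilde{\varphi}|_{\textbf{U}(\textbf{H}_\Gamma)}$ to be an isomorphism on the unipotent radical, promoting $\tilde{\varphi}$ to an algebraic automorphism of $\textbf{H}_\Gamma^\circ$. Because $\tilde{\varphi}$ is defined over $\mathbb{Q}$, it sends $\textbf{H}_\Gamma(\mathbb{Z}[\frac{1}{S}])$ commensurably into itself, so $\iota(\varphi(\Gamma)) = \tilde{\varphi}(\iota(\Gamma))$ is commensurable with $\iota(\Gamma)$, which yields $[\Gamma : \varphi(\Gamma)] < \infty$. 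The main obstacle is the strong-unipotent-radical step in the diagonal argument: one must verify that $\pi_2(\textbf{U}(\overline{\Delta})) = \textbf{U}(\textbf{H}_\Gamma)^\circ$, equivalently $\dim_\Q \textbf{U}(\textbf{K}) = h(\Gamma)$, so that $\pi_2(\ker \pi_1)$ centralizes all of $\textbf{U}(\textbf{H}_\Gamma)$ rather than merely $\textbf{U}(\textbf{K})$. This comes down to a careful transfer of unipotent dimensional information through the isomorphism $\Phi$, together with the Hirsch-length identity $h(\varphi(\Gamma) \cap \iota^{-1}(\textbf{U}(\textbf{H}_\Gamma))) = h(\iota^{-1}(\textbf{U}(\textbf{H}_\Gamma)))$ that falls out of the equality $h(\varphi(\Gamma)) = h(\Gamma)$ applied to the short exact sequence $1 \to \Gamma \cap \iota^{-1}(\textbf{U}(\textbf{H}_\Gamma)) \to \Gamma \to \Gamma/(\Gamma \cap \iota^{-1}(\textbf{U}(\textbf{H}_\Gamma))) \to 1$.
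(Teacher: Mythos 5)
There is a genuine gap at the very last step, and it is the step that carries all the content. Your argument reduces everything to the claim that, because $\tilde{\varphi}$ is a $\Q$-defined algebraic automorphism, it ``sends $\textbf{H}_\Gamma(\Z[\frac{1}{S}])$ commensurably into itself, so $\tilde{\varphi}(\iota(\Gamma))$ is commensurable with $\iota(\Gamma)$.'' This does not follow: Definition \ref{def:algebraichull} only guarantees that $\iota(\Gamma)$ is a Zariski-dense \emph{subgroup} of $\textbf{H}_\Gamma(\Z[\frac{1}{S}])$, not that it is commensurable with it. That $\Gamma$ (or even $\Fitt(\Gamma)$) be $S$-arithmetic in its hull is an extra hypothesis throughout this paper (Theorems \ref{theorem1.4_baues_grunewald}, \ref{previouscor}, \ref{out(G)_arithmetic}), and it genuinely fails: for $\Gamma = \Z \times \Z[\frac{1}{2}]$ the hull is $\Q^2$ with spectrum $S=\{2\}$, and $\iota(\Gamma)$ has infinite index in $\Z[\frac{1}{2}]^2$. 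A $\Q$-defined automorphism preserving $\Z[\frac12]^2$ need not preserve such an infinite-index subgroup up to commensurability (e.g.\ the coordinate swap sends $\Z\times\Z[\frac12]$ to $\Z[\frac12]\times\Z$). Since your final step uses no property of $\tilde{\varphi}$ beyond $\Q$-definedness, it cannot close the argument; to finish you would still have to prove directly that a Zariski-dense injective image of $\Gamma\cap\textbf{U}(\textbf{H}_\Gamma)$ in itself has finite index, and likewise on the virtually abelian quotient --- which is essentially the whole proposition again, specialized to nilpotent and abelian FAR-groups.

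Two further, smaller points. First, the paper proves this proposition by completely elementary means (induction on derived length, co-Hopficity of groups with \textbf{Min} via Lemma \ref{lem:cohopf}, and the finite-rank torsion-free abelian case from the literature), precisely so that it can then be \emph{used} to build Corollary \ref{injectiveextension}; your route, while not formally circular, depends on Theorem \ref{algebraic_hull}, whose proof invokes linearity over $\Z[\frac{1}{S}]$ and hence finite generation, whereas the proposition is stated and needed for arbitrary virtually FAR-groups. Second, the positive parts of your construction are essentially sound: the reduction modulo $\tau(\Gamma)$ (using that $\tau(\Gamma)$ is Chernikov, hence satisfies \textbf{Min} and is co-Hopfian), and the verification that $\overline{\iota(\varphi(\Gamma))}$ contains $\textbf{U}(\textbf{H}_\Gamma)$ via the additivity $h(\Gamma)=h(N)+h(\Gamma/N)$, can both be made rigorous. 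The machinery is fine; the arithmetic punchline is missing.
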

 Recall that a group $G$ is co-Hopfian if every injective endomorphism $\varphi \colon G \to G$ is in fact an automorphism. We start by proving the following lemma.

\begin{lemma}
	\label{lem:cohopf}
If a group satisfies \textbf{Min}, then it is co-Hopfian.
\end{lemma}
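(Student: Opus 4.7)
The plan is a direct application of the minimal condition to the descending chain of iterated images. Given an injective endomorphism $\varphi \colon G \to G$ of a group $G$ satisfying \textbf{Min}, consider the descending chain of subgroups
$$G \supseteq \varphi(G) \supseteq \varphi^2(G) \supseteq \varphi^3(G) \supseteq \cdots$$
obtained by iteration. By \textbf{Min}, this chain must stabilize, so there exists $n \geq 0$ such that $\varphi^n(G) = \varphi^{n+1}(G)$.

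Next, I would exploit injectivity to transfer the stabilization at level $n$ down to level $0$. For any $g \in G$, the element $\varphi^n(g)$ lies in $\varphi^n(G) = \varphi^{n+1}(G)$, so there exists $h \in G$ with $\varphi^n(g) = \varphi^{n+1}(h) = \varphi^n(\varphi(h))$. Since $\varphi$ is injective, so is its $n$-fold iterate $\varphi^n$, and cancelling gives $g = \varphi(h)$. Thus $\varphi$ is surjective, completing the proof that $G$ is co-Hopfian.

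There is no real obstacle here; the argument is the standard dual of the Hopfian argument for groups satisfying the maximal condition, and the only ingredient beyond the statement of \textbf{Min} is that injectivity is preserved under composition. I would write this up as one short paragraph in the paper.
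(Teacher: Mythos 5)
Your proof is correct and uses essentially the same idea as the paper: both arguments consider the descending chain $\varphi^i(G)$ and invoke \textbf{Min} (the paper phrases it contrapositively, showing a proper image yields a strictly descending chain, while you argue directly from stabilization and injectivity of $\varphi^n$). No changes needed.
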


\begin{proof}
We prove this claim via contraposition, and to proceed, we assume that there exists a injective morphism $\varphi: G \to G$ such that $\varphi(G)$ is a proper subgroup of $G$. Letting $G_i = \varphi^i(G)$ for $i>0$, induction implies that $G_{i+1}$ is a proper subgroup of $G_i$ for all $i$. Hence, $\{G_i\}_{i=1}^\infty$ is a strictly descending chain of subgroups of $G$ that does not stabilize after finitely many steps. Thus, $G$ does not satisfy $\textbf{Min}$.
\end{proof}

\begin{proof}[Proof of Proposition \ref{injfi}]
First assume that $\Gamma$ is \textit{solvable}. We prove this via induction on the derived length $d$.

For the base case, assume that $\Gamma$ is abelian. If $T$ is the subgroup of torsion elements, we will then show that $\varphi(T) = T$. Let $A_p$ be the $p$-component of $T$, i.e.~the set of elements in $T$ that have order a power of $p$. As $\Gamma$ has finite abelian ranks, we know that $A_p$ satisfies \textbf{Min} by \cite[5.1.2.]{lennox_robinson}, and thus, the group is co-Hopfian by Lemma \ref{lem:cohopf}. In particular, $\varphi(A_p) = A_p$, and as $T$ is equal to the direct sum of its $p$-components, we get that $\varphi(T) = T$.

To continue, note that the injective morphism $\varphi \colon \Gamma \to \Gamma$ induces an injective morphism $\bar{\varphi} \colon \bar{\Gamma} \to \bar{\Gamma}$ on the torsion-free quotient $\bar{\Gamma} = \Gamma/T$. By \cite[6.1.3]{lennox_robinson}, we have that $\bar{\varphi}\left(\bar{\Gamma}\right)$ has finite index in $\bar{\Gamma}$. Therefore we know that
\begin{eqnarray*}
|\Gamma : \varphi(\Gamma)| =  |\bar{\Gamma} : \bar{\varphi}(\bar{\Gamma})|  < \infty,
\end{eqnarray*} and thus, the abelian case holds.

Now suppose that $\Gamma$ has derived length $d>1$, and let $A$ be the last nontrivial term of the derived series, which is invariant under $\varphi$. Define the normal subgroup 
$$
B = \left\{ x \in \Gamma \mid  \exists \: n \in \mathbb{N}_0:  \varphi^n(x) \in A \right\}.
$$
It then immediately follows that $B$ is also $\varphi$-invariant. Moreover, the induced map $\bar{\varphi} \colon \Gamma / B \to \Gamma / B$ is injective. Note that $B$ is abelian, as if $x, y \in B$, there exists $m, n \in \mathbb{N}$ such that $\varphi^m(x), \varphi^n(y) \in A$. By replacing $m$ and $n$ by their maximum, we may assume that $m = n$. Now as $A$ is abelian, we have $\varphi^n(x y x^{-1} y^{-1}) = 1$ and thus, as $\varphi$ is injective, that also implies $x y x^{-1} y^{-1} = 1$. In particular, as this holds for all $x, y \in B$, the group is abelian. By induction, $|\Gamma / B : \bar{\varphi}(\Gamma / B) | < \infty$, and the base case implies $|B : \varphi(B)| < \infty$. By combining the previous inequalities, we find that $$[\Gamma:\varphi(\Gamma)] = |\Gamma / B : \bar{\varphi}(\Gamma / B) | \cdot |B : \varphi(B)| < \infty.$$

Now we proceed with the general case of when $\Gamma$ is a virtually FAR-group, and let $\Gamma_0 \leq \Gamma$ be a normal solvable subgroup of finite abelian ranks of index $m$. Letting $\Gamma^m = \left<x^m \: | \: x \in \Gamma \right>$, we see that $\Gamma/ \Gamma^m$ is a virtually solvable group of finite abelian rank with exponent at most $m$, and hence finite. Since $\Gamma^m \leq \Gamma_0,$ it is solvable of finite abelian ranks, and we see by the above arguments that $\varphi(\Gamma^m)$ has finite index in $\Gamma^m$. Thus, we may write
\begin{align*}
|\Gamma : \varphi(\Gamma)| \leq | \Gamma : \varphi(\Gamma^m) | = \vert \Gamma : \Gamma^m \vert \cdot \vert \Gamma^m : \varphi(\Gamma^m) \vert < \infty. \quad \quad \quad \qedhere
\end{align*}\end{proof}

\begin{cor}\label{injectiveextension}
	Let $\varphi: \Gamma \to \Gamma$ be an injective morphism of a virtually FAR-group with $\mathbb{Q}$-algebraic hull $\iota: \Gamma \to \textbf{H}$, then $\varphi$ has a unique extension $\Psi: \textbf{H} \to \textbf{H}$.
\end{cor}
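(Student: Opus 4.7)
The plan is to combine three of the preceding results: Proposition \ref{injfi} to convert injectivity into finite-index image, Lemma \ref{fihull} to identify the $\Q$-algebraic hull of this finite-index image, and Proposition \ref{isomorphism_hull} to produce the desired algebraic morphism from a surjective morphism of groups. Uniqueness will then be a direct consequence of Zariski density.

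First I would set $\Gamma' = \varphi(\Gamma)$. By Proposition \ref{injfi}, $\Gamma'$ has finite index in $\Gamma$, and so $\varphi$ descends to a \emph{surjective} morphism $\varphi \colon \Gamma \to \Gamma'$. Since $\Gamma$ admits a $\Q$-algebraic hull, it is a WTN-group, and $\tau(\Gamma') \leq \tau(\Gamma) = \{1\}$ shows that $\Gamma'$ is also a WTN FAR-group and hence admits a $\Q$-algebraic hull by Theorem \ref{algebraic_hull}. By Lemma \ref{fihull}, the hull of $\Gamma'$ is given by the inclusion $\iota' \colon \Gamma' \hookrightarrow \overline{\iota(\Gamma')} =: \textbf{H}'$, where $\textbf{H}'$ is a $\Q$-closed subgroup of finite index in $\textbf{H}$.

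Next I would apply Proposition \ref{isomorphism_hull} to the surjective morphism $\varphi \colon \Gamma \to \Gamma'$ together with the hulls $(\textbf{H}, \iota)$ and $(\textbf{H}', \iota')$. This yields an algebraic morphism $\Psi_0 \colon \textbf{H} \to \textbf{H}'$ satisfying $\iota' \circ \varphi = \Psi_0 \circ \iota$. Post-composing with the inclusion $j \colon \textbf{H}' \hookrightarrow \textbf{H}$ then produces the algebraic endomorphism $\Psi = j \circ \Psi_0 \colon \textbf{H} \to \textbf{H}$, which satisfies $\iota \circ \varphi = \Psi \circ \iota$ and therefore extends $\varphi$ as required.

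Finally, for uniqueness, suppose $\Psi_1, \Psi_2 \colon \textbf{H} \to \textbf{H}$ are two algebraic morphisms both extending $\varphi$ in the sense that $\Psi_i \circ \iota = \iota \circ \varphi$ for $i=1,2$. Then $\Psi_1$ and $\Psi_2$ agree on the subset $\iota(\Gamma)$, which is Zariski dense in $\textbf{H}$ by condition (i) of Definition \ref{def:algebraichull}. Since the equalizer of two algebraic morphisms is Zariski closed, we conclude $\Psi_1 = \Psi_2$. There is no real obstacle here; the only minor point to verify is that $\Gamma' = \varphi(\Gamma)$ inherits the WTN property needed to invoke Theorem \ref{algebraic_hull}, which is immediate from $\tau(\Gamma') \leq \tau(\Gamma)$.
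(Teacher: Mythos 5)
Your proof is correct and follows the paper's own route exactly: the paper's proof is the one-line ``combination of Proposition \ref{injfi} and Lemma \ref{fihull}'' (with Proposition \ref{isomorphism_hull} and Zariski density supplying the extension and its uniqueness, just as you spell out). One inessential remark: the claim $\tau(\varphi(\Gamma)) \leq \tau(\Gamma)$ is not literally immediate, since $\varphi(\Gamma)$ need not be normal in $\Gamma$ and normal torsion subgroups of a subgroup need not be normal in the ambient group; but this step is superfluous because Lemma \ref{fihull} already asserts that the finite-index subgroup $\varphi(\Gamma)$ has the hull $\overline{\iota(\varphi(\Gamma))}$.
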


\begin{proof}
This is a combination of Proposition \ref{injfi} and Lemma \ref{fihull}.
\end{proof}

\begin{prop}\label{unipotent_hull_fitt_alg_hull}
	Let $\Gamma$ be a  virtually FAR \emph{WTN}-group with $\Q$-algebraic hull $\textbf{H}_\Gamma$. If $\textbf{U}(\textbf{H}_\Gamma)$ is the unipotent radical of $\textbf{H}_\Gamma$, then $\textbf{U}(\textbf{H}_\Gamma) \cap \Gamma= \Fitt(\Gamma)$.
\end{prop}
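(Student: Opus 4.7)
The plan is to prove the two inclusions $\textbf{U}(\textbf{H}_\Gamma) \cap \Gamma \subseteq \Fitt(\Gamma)$ and $\Fitt(\Gamma) \subseteq \textbf{U}(\textbf{H}_\Gamma) \cap \Gamma$ separately. The first inclusion is immediate: $\iota^{-1}(\textbf{U}(\textbf{H}_\Gamma)) \cap \Gamma$ is normal in $\Gamma$ because $\textbf{U}(\textbf{H}_\Gamma)$ is normal in $\textbf{H}_\Gamma$, and it is nilpotent since it embeds into the unipotent group $\textbf{U}(\textbf{H}_\Gamma)$. Hence it is contained in the maximal normal nilpotent subgroup $\Fitt(\Gamma)$.

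For the reverse inclusion, I would prove the stronger assertion that the Zariski closure $\textbf{F} := \overline{\iota(\Fitt(\Gamma))}$ is contained entirely in $\textbf{U}(\textbf{H}_\Gamma)$. Since $\Fitt(\Gamma)$ is a normal nilpotent subgroup of $\Gamma$ and $\iota(\Gamma)$ is Zariski dense in $\textbf{H}_\Gamma$, the closure $\textbf{F}$ is a $\Q$-closed normal nilpotent subgroup of $\textbf{H}_\Gamma$. I first handle the identity component: the connected nilpotent group $\textbf{F}^\circ$ decomposes as a direct product $\textbf{U}(\textbf{F}^\circ) \times \textbf{T}$, where $\textbf{T}$ is its unique maximal torus. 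By uniqueness, $\textbf{T}$ is characteristic in $\textbf{F}^\circ$ and therefore normal in $\textbf{H}_\Gamma$. Rigidity of tori (fact \textbf{AG3}) applied to $\textbf{T} \triangleleft \textbf{H}_\Gamma$ forces $\textbf{H}_\Gamma/C_{\textbf{H}_\Gamma}(\textbf{T})$ to be finite, so the connected subgroup $\textbf{U}(\textbf{H}_\Gamma) \subseteq \textbf{H}_\Gamma^\circ$ must centralize $\textbf{T}$. The strong unipotent radical condition then yields $\textbf{T} \subseteq C_{\textbf{H}_\Gamma}(\textbf{U}(\textbf{H}_\Gamma)) \subseteq \textbf{U}(\textbf{H}_\Gamma)$, and since a torus sitting inside a unipotent group must be trivial, we conclude $\textbf{T} = 1$ and $\textbf{F}^\circ \subseteq \textbf{U}(\textbf{H}_\Gamma)$.

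To handle the full (possibly disconnected) group $\textbf{F}$, I take any $n \in \textbf{F}$ with Jordan decomposition $n = n_u n_s$. Letting $k = |\textbf{F}/\textbf{F}^\circ|$, the power $n^k \in \textbf{F}^\circ \subseteq \textbf{U}(\textbf{H}_\Gamma)$ is unipotent, so $n_s^k = 1$ and $n_s$ has finite order. For any $u \in \textbf{U}(\textbf{H}_\Gamma)$, connectedness of $\textbf{U}(\textbf{H}_\Gamma)$ forces its conjugation action on the finite component group $\textbf{F}/\textbf{F}^\circ$ to be trivial, so $n' := u n_s u^{-1} n_s^{-1}$ lies in $\textbf{F}^\circ \subseteq \textbf{U}(\textbf{H}_\Gamma)$. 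A standard fact about nilpotent algebraic groups in characteristic zero states that every semisimple element centralizes the identity component, so $n_s$ and $n'$ commute. The equation $u n_s u^{-1} = n' n_s$ then exhibits $u n_s u^{-1}$ as a product of commuting unipotent and semisimple factors; uniqueness of Jordan decomposition together with the semisimplicity of $u n_s u^{-1}$ forces the unipotent factor $n' = 1$. Hence $u$ commutes with $n_s$ for every $u \in \textbf{U}(\textbf{H}_\Gamma)$, placing $n_s \in C_{\textbf{H}_\Gamma}(\textbf{U}(\textbf{H}_\Gamma)) \subseteq \textbf{U}(\textbf{H}_\Gamma)$, and thus $n_s = 1$. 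Therefore $n = n_u \in \textbf{U}(\textbf{H}_\Gamma)$, proving $\textbf{F} \subseteq \textbf{U}(\textbf{H}_\Gamma)$ and in particular $\iota(\Fitt(\Gamma)) \subseteq \textbf{U}(\textbf{H}_\Gamma)$.

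The main obstacle is the appeal to the centrality of semisimple elements relative to the identity component in a nilpotent algebraic group. In the connected case this is built into the direct product decomposition; in the disconnected case it can be extracted from the observation that the nilpotent subgroup $\overline{\langle n_s \rangle} \cdot \textbf{F}^\circ$ cannot admit a nontrivial action of the finite $d$-group $\overline{\langle n_s \rangle}$ on either factor of $\textbf{F}^\circ = \textbf{U}(\textbf{F}^\circ) \times \textbf{T}$, since a nontrivial action on the torus would contradict nilpotency (via the same commutator calculation that rules out inversion actions) and a nontrivial action on the unipotent factor would require a finite-order nontrivial unipotent automorphism in characteristic zero.
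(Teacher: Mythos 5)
Your proof is correct, and the first inclusion is handled exactly as in the paper. For the reverse inclusion you take a genuinely different route. The paper passes to a maximal normal nilpotent algebraic subgroup $\textbf{N} \leq \textbf{H}_\Gamma$; maximality forces $\textbf{U}(\textbf{N}) = \textbf{U}(\textbf{H}_\Gamma)$, and the direct-product decomposition of the nilpotent group $\textbf{N}$ into its unipotent and semisimple parts then shows at once that every semisimple element of $\textbf{N}$ centralizes $\textbf{U}(\textbf{H}_\Gamma)$, hence is trivial by the strong unipotent radical condition; thus $\textbf{U}(\textbf{H}_\Gamma)$ is the unique maximal normal nilpotent algebraic subgroup and contains $\overline{\Fitt(\Gamma)}$. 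You instead work directly with $\textbf{F} = \overline{\Fitt(\Gamma)}$, whose own unipotent radical need not be all of $\textbf{U}(\textbf{H}_\Gamma)$, so you must argue separately that its semisimple elements centralize $\textbf{U}(\textbf{H}_\Gamma)$: rigidity of tori applied to the maximal torus of $\textbf{F}^\circ$ (which is normal in $\textbf{H}_\Gamma$ by uniqueness), and a commutator plus Jordan-decomposition computation for the finitely many components of $\textbf{F}$. Both arguments bottom out in the same place --- a semisimple element centralizing $\textbf{U}(\textbf{H}_\Gamma)$ must lie in it and hence be trivial --- but the paper's maximality trick buys a shorter derivation of the centralization, while your version avoids invoking the existence of a maximal normal nilpotent algebraic subgroup and makes the disconnected case explicit. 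The one fact you use as a black box, that semisimple elements of a possibly disconnected nilpotent algebraic group centralize its identity component, is no stronger than the direct-product decomposition the paper itself cites without proof, and your closing sketch of why it holds is sound.
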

\begin{proof}
		As $\textbf{U}(\textbf{H}_\Gamma) \cap \Gamma$ is a nilpotent normal subgroup of $\Gamma$, it is clear that  $\textbf{U}(\textbf{H}_\Gamma) \cap \Gamma \leq \Fitt(\Gamma)$. Hence, we focus on the other inclusion.
		
	Take any maximal nilpotent normal $\textbf{N}$ subgroup of $\textbf{H}_\Gamma$, then we show that $\textbf{N}$ is unipotent. By maximality, we get that $\textbf{N}$ is Zariski-closed, and moreover $\textbf{U}(\textbf{N}) = \textbf{U}(\textbf{H}_\Gamma)$. As $\textbf{N}$ is nilpotent, it is isomorphic to the direct sum of $\textbf{U}(\textbf{N})$ and the subgroup $\textbf{N}_s$ of semisimple elements. In particular, any element in $\textbf{N}_s$ would centralize $\textbf{U}(\textbf{N}) = \textbf{U}(\textbf{H}_\Gamma)$ and hence is trivial. 
	
	In particular, the unique maximal normal nilpotent subgroup of $\textbf{H}_\Gamma$ is given by $\textbf{U}(\textbf{H}_\Gamma)$. Hence, $\overline{\Fitt(\Gamma)} \leq \textbf{U}(\textbf{H}_\Gamma)$. Therefore,
	$$
	\Fitt(\Gamma) \leq \overline{\Fitt(\Gamma)} \cap \Gamma \leq \textbf{U}(\textbf{H}_\Gamma) \cap \Gamma
	$$ which ends this proposition.
\end{proof}

Via Corollary \ref{unipotent_on_fitting_subgroup}, once an embedding $\Gamma \leq \textbf{H}_\Gamma$ is fixed, we may identify $\Aut(\Gamma)$ with a subgroup of $\Aut(\textbf{H}_\Gamma)$. It is important to note that $\Aut(\Gamma)$ is not Zariski dense in $\Aut_{\mathbb{Q}}(\textbf{H}_\Gamma)$ since elements of $\Aut(\Gamma)$ preserve $\Fitt(\Gamma)$. That means they also preserve the Zariski closure $\textbf{F}$ of $\Fitt(\Gamma)$, which is not always the case for a general automorphism.

\begin{ex}
Consider the group $\Gamma = \Z^3 \rtimes_A \Z$, where $A \in \GL(3,\Z)$ is given by $$A = \begin{pmatrix}
	2 & 1 & 0\\1 & 1 & 0 \\ 0 & 0 & 1 
\end{pmatrix}.$$
Note that the $\Q$-algebraic hull has unipotent radical $\mathbb{C}^4$ where $\textbf{F} = \C^3 \times \{0\}$ and $Z(\textbf{H}) = \{0\} \times \{0\}\times \C^2$. In particular, $\textbf{F}$ is not preserved by every automorphism of $\textbf{H}_\Gamma$, for example the automorphism that permutes the two components of the center and is the identity on the other components.
	\end{ex}

\section{Thickenings and virtually nilpotent supplements}
\label{sec:thick}

In this section, we demonstrate for a finitely generated virtually FAR \emph{WTN}-group $\Gamma$ with $\Q$-algebraic hull $\textbf{H}_\Gamma$ there exists a finite extension $\tilde{\Gamma} \leq \textbf{H}_\Gamma$, which is called a \emph{thickening} of $\Gamma$, that satisfies the following properties:
\begin{enumerate}[(i)]
\item the group $\Aut(\tilde{\Gamma})$ is a finite extension of $\Aut(\Gamma)$;
\item there exists a virtually nilpotent supplement $C$ for $\Fitt(\tilde{\Gamma})$ in $\tilde{\Gamma}$ that is maximal with respect to inclusion;
\item there exists a Zariski-dense subgroup $\theta \leq U(H_\Gamma)$, called the \emph{strong unipotent shadow}, such that $ \theta \cap \overline{\Fitt(\tilde{\Gamma})} = \Fitt(\tilde{\Gamma})$, $\tilde{\Gamma}$ normalizes $\theta$, and if $\varphi \in \Aut(\tilde{\Gamma})$ satisfies $\varphi(C) = C$, then the extension $\Psi$ of $\varphi$ to $\textbf{H}_\Gamma$ satisfies $\Psi(\theta) = \theta$.
\end{enumerate}
Let $\Gamma$ be a finitely generated virtually FAR-group. Since $\Fitt(\Gamma)$ is a characteristic nilpotent subgroup, we may define
$$
A_{\Gamma | \Fitt(\Gamma)} = \left\{ \varphi \in \Aut(\Gamma) \: | \: \varphi|_{\Gamma / \Fitt(\Gamma)} = \text{id}_{\Gamma / \Fitt(\Gamma)} \right\}.
$$ Section \ref{section_s_arithmetic_subgroups} then uses the strong unipotent shadow to show that $A|_{\tilde{\Gamma} \: | \: \Fitt(\tilde{\Gamma})}$ is a $S$-arithmetic subgroup of $\textbf{A}_{\textbf{H}_\Gamma \: | \: \overline{\Fitt(\tilde{\Gamma})}}$. We refer to Section \ref{section_s_arithmetic_subgroups} for the exact definitions.

To simplify the following discussion, we define $\textbf{F} = \overline{\Fitt(\Gamma)} \leq \textbf{H}_\Gamma$ as the Zariski closure of the Fitting subgroup of $\Gamma$. In this case, we have that $\textbf{F}$ is a connected unipotent normal subgroup of $\textbf{H}_\Gamma$ which is defined over $\mathbb{Q}$. We also have the following property.
\begin{prop}
	The commutator of the connected component $\textbf{H}_\Gamma^\circ$ satisfies $[\textbf{H}_\Gamma^\circ, \textbf{H}_\Gamma^\circ] \leq \textbf{F}$. Moreover, if $\textbf{C}$ is a Cartan subgroup of $\textbf{H}_\Gamma$, then there is a decomposition
	$$
	\textbf{H}_\Gamma = \textbf{F} \cdot \textbf{C}.
	$$
\end{prop}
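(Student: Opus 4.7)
The plan is to first establish the commutator containment $[\textbf{H}_\Gamma^\circ, \textbf{H}_\Gamma^\circ] \leq \textbf{F}$, after which the decomposition $\textbf{H}_\Gamma = \textbf{F} \cdot \textbf{C}$ is an immediate application of property \textbf{SG5}. For that application, I would note that $\textbf{H}_\Gamma$ is virtually solvable (being the Zariski closure of the virtually solvable group $\Gamma$), that $\textbf{F}$ is contained in $\textbf{U}(\textbf{H}_\Gamma)$ since $\Fitt(\Gamma) \leq \textbf{U}(\textbf{H}_\Gamma)$ by Proposition \ref{unipotent_hull_fitt_alg_hull} and the unipotent radical is Zariski-closed, and that $\textbf{F}$ is normal in $\textbf{H}_\Gamma$ because $\Fitt(\Gamma)$ is characteristic in the Zariski-dense subgroup $\Gamma$.

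To establish the commutator containment, I would consider the $\mathbb{Q}$-defined algebraic quotient $\pi \colon \textbf{H}_\Gamma \to \textbf{H}_\Gamma / \textbf{F}$. Proposition \ref{unipotent_hull_fitt_alg_hull} yields
$$
\Gamma \cap \textbf{F} \leq \Gamma \cap \textbf{U}(\textbf{H}_\Gamma) = \Fitt(\Gamma),
$$
and the reverse inclusion being obvious, this gives $\pi(\Gamma) \cong \Gamma / \Fitt(\Gamma)$. As $\Gamma$ is a finitely generated FAR-group, it is minimax by Lemma \ref{rank_definitions_finitely_generated}, and so the cited result \cite[5.2.2]{lennox_robinson} implies that $\Gamma / \Fitt(\Gamma)$ is virtually abelian.

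Since $\pi(\Gamma)$ is Zariski-dense in $\textbf{H}_\Gamma / \textbf{F}$ and the Zariski closure of a finite-index abelian subgroup is itself an abelian subgroup of finite index in the ambient closure (by a routine coset argument), the quotient $\textbf{H}_\Gamma / \textbf{F}$ is virtually abelian. The image $\pi(\textbf{H}_\Gamma^\circ) = \textbf{H}_\Gamma^\circ \textbf{F} / \textbf{F}$ is connected and of finite index in $\textbf{H}_\Gamma / \textbf{F}$, hence equals the connected component. Virtual abelianness of $\textbf{H}_\Gamma/\textbf{F}$ then forces this connected component to be abelian, which is precisely $[\textbf{H}_\Gamma^\circ, \textbf{H}_\Gamma^\circ] \leq \textbf{F}$, completing the first claim and enabling the application of \textbf{SG5}.

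The main conceptual point is that the minimax structure of $\Gamma$ forces $\Gamma/\Fitt(\Gamma)$ to be virtually abelian, and this property transfers through Zariski density to the algebraic quotient $\textbf{H}_\Gamma/\textbf{F}$; everything else is formal, so I do not anticipate a serious obstacle.
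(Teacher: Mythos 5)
Your proof is correct, but it takes a genuinely different route from the paper's. The paper argues entirely ``upstairs'': since $\Gamma \cap \textbf{H}_\Gamma^\circ$ is Zariski dense in $\textbf{H}_\Gamma^\circ$ and commutators commute with Zariski closure, one gets
$[\textbf{H}_\Gamma^\circ, \textbf{H}_\Gamma^\circ] = \overline{[\Gamma \cap \textbf{H}_\Gamma^\circ, \Gamma \cap \textbf{H}_\Gamma^\circ]} \leq \overline{\textbf{U}(\textbf{H}_\Gamma) \cap \Gamma} = \overline{\Fitt(\Gamma)} = \textbf{F}$,
where the middle inclusion uses that the commutator subgroup of a connected solvable algebraic group is unipotent (Lie--Kolchin) together with Proposition \ref{unipotent_hull_fitt_alg_hull}; SG5 then finishes as in your argument. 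You instead pass to the algebraic quotient $\textbf{H}_\Gamma/\textbf{F}$ and import the group-theoretic structure theorem that a finitely generated solvable minimax group has virtually abelian quotient by its Fitting subgroup, transferring virtual abelianness through Zariski density. Both are valid; the paper's version is shorter and needs only the closure--commutator identity plus facts already recorded in Section \ref{background_linear_algebraic_groups}, whereas yours leans on the Lennox--Robinson structure theory but has the merit of making explicit \emph{why} the containment should hold (the quotient $\Gamma/\Fitt(\Gamma)$ is virtually abelian). Two small points to tighten: the citation \cite[5.2.2]{lennox_robinson} as quoted in the paper applies to finitely generated \emph{solvable} minimax groups, while the $\Gamma$ here is only virtually solvable, so you should insert the (routine) reduction to a normal solvable finite-index subgroup $\Gamma_0$ and note $\Fitt(\Gamma_0) = \Fitt(\Gamma) \cap \Gamma_0$; and when you say the abelian finite-index subgroup of $\pi(\Gamma)$ has abelian closure of finite index containing the connected component, that is exactly the right justification and worth stating rather than leaving as ``a routine coset argument.''
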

\begin{proof}
	Since $\Gamma$ is Zariski dense in $\textbf{H}_\Gamma$, we have 
	$$
	[\textbf{H}_\Gamma^\circ, \textbf{H}_\Gamma^\circ] = [\overline{\textbf{H}_\Gamma^\circ \cap \Gamma},\overline{ \textbf{H}_\Gamma^\circ \cap \Gamma}] = \overline{[\textbf{H}_\Gamma^\circ \cap \Gamma, \textbf{H}_\Gamma^\circ \cap \Gamma]} \leq \overline{U(H_\Gamma) \cap \Gamma} = \overline{\Fitt(\Gamma)} = \textbf{F}.
	$$
	The decomposition then follows from \textbf{SG5} of Subsection \ref{background_linear_algebraic_groups}.
\end{proof}

\subsection{The automorphism group of the thickening}
We start by introducing thickenings of Zariski dense subgroups in unipotent algebraic groups. Let $\textbf{U}$ be a $\mathbb{Q}$-defined unipotent linear algebraic group with a Zariski dense subgroup $N \leq \textbf{U}(\Q)$. We then have $\textbf{U}(\Q)$ is isomorphic to the rational Mal'tsev completion of $N$, i.e., $\textbf{U}$ is torsion-free, radicable and for each $x \in \textbf{U}$, there exists a $k \in \mathbb{N}$ such that $x^k \in N$. For any $m \in \mathbb{N}$, we define
$$
N^{\frac{1}{m}} = \left< x \in \textbf{U}(\Q) \: | \: x^m \in N \right>.
$$
The following property will be of use when defining the notion of a thickening which follows from \cite[Exercise 7, Chapter 6]{segal}. 
\begin{lemma}\label{thickening_index}
With notations as above, we have $[N^{\frac{1}{m}} : N] < \infty$ for all $m > 0$.
\end{lemma}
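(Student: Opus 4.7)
The plan is to induct on the Hirsch length $h = \dim_\Q \textbf{U}$. For the base case where $\textbf{U}$ is abelian, identifying $\textbf{U}(\Q)$ additively with $\Q^h$, the definition of $N^{1/m}$ simplifies to the subgroup $\frac{1}{m}N$, so that $[N^{1/m}:N] = [N:mN]$; the latter is finite of order at most $m^h$ by the standard fact that a torsion-free abelian group $N$ of finite rank $h$ has $N/mN$ finite.

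For the inductive step, I would fix a one-dimensional $\Q$-closed central subgroup $\textbf{Z} \leq Z(\textbf{U})$, which exists because the center of any non-trivial connected unipotent $\Q$-group is itself a non-trivial connected $\Q$-subgroup. Setting $\pi \colon \textbf{U} \to \textbf{V} := \textbf{U}/\textbf{Z}$ and $\bar{N} := \pi(N)$, the image $\bar{N}$ is Zariski dense in $\textbf{V}$, and the relation $\pi(x)^m = \pi(x^m)$ shows $\pi(N^{1/m}) \leq \bar{N}^{1/m}$. The inductive hypothesis applied to $\textbf{V}$ then yields $[\pi(N^{1/m}) : \pi(N)] \leq [\bar{N}^{1/m} : \bar{N}] < \infty$. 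Using the short exact sequence $1 \to N^{1/m} \cap \textbf{Z} \to N^{1/m} \to \pi(N^{1/m}) \to 1$ and its analogue for $N$, the problem reduces to bounding the central intersection $[N^{1/m} \cap \textbf{Z} : N \cap \textbf{Z}]$.

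This last step is the main obstacle, because central elements of $N^{1/m}$ can arise from commutators or products of non-central generators, not only as $m$-th roots of elements of $N \cap \textbf{Z}$. The plan is to exhibit an integer $d$, depending only on the nilpotency class $c$ of $\textbf{U}$, such that $(N^{1/m})^{m^d} \leq N$; once this is known, every $z \in N^{1/m} \cap \textbf{Z}$ satisfies $z^{m^d} \in N \cap \textbf{Z}$, so $N^{1/m} \cap \textbf{Z}$ embeds into $\{z \in \textbf{Z} : z^{m^d} \in N \cap \textbf{Z}\}$, which has finite index over $N \cap \textbf{Z}$ by the abelian base case applied with $m^d$ in place of $m$. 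To produce such a $d$, one applies the Hall--Petresco collection formula $(x_1 \cdots x_k)^m = x_1^m \cdots x_k^m \cdot \prod_{j \geq 2} w_j^{\binom{m}{j}}$ with $w_j \in \gamma_j(\textbf{U})$: the leading term lies in $N$ whenever each $x_i^m \in N$, while the correction factors live in $\gamma_2(\textbf{U})$, a unipotent group of strictly smaller nilpotency class; an induction on $c$ combined with the observation that commutators of $m$-th powers of elements in $N$ themselves lie in $N$ then yields a uniform exponent $m^d$ as required.
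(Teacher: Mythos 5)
The paper does not actually prove this lemma: it is stated as ``exactly \cite[Exercise 7, Chapter 6]{segal}'' and no argument is given, so there is no in-paper proof to compare against. Your proposal is a correct self-contained solution along the standard lines. The outer induction on $\dim_\Q \textbf{U}$ via a one-dimensional central $\mathbb{Q}$-subgroup, the index formula $[N^{1/m}:N]=[\pi(N^{1/m}):\pi(N)]\cdot[N^{1/m}\cap\textbf{Z}:N\cap\textbf{Z}]$, and the abelian base case (any subgroup of $\mathbb{Q}^h$ has rank at most $h$, so $N/mN$ is finite of order at most $m^h$) are all fine; note that Zariski density is used only to keep the inductive hypothesis applicable to $\bar N\leq\textbf{V}$. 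The crux is, as you say, the uniform exponent claim that some $m^{d(c)}$-th power of every element of $N^{1/m}$ lies in $N$; this is precisely P.~Hall's quantitative isolator lemma, and your Hall--Petrescu sketch is the standard proof of it. Two details deserve care if you write it out: first, the recursion should be organized as a downward induction on the weight $j$, proving that elements of $\gamma_j(\langle x_1,\dots,x_k\rangle)$ have a uniformly bounded power in $N$ (the base case $j=c$ uses that $[x_{i_1}^m,\dots,x_{i_c}^m]=[x_{i_1},\dots,x_{i_c}]^{m^c}\in N$ exactly, since $\gamma_{c+1}=1$); second, the binomial coefficients $\binom{m^d}{j}$ need not be divisible by the power of $m$ you want (already in the Heisenberg group with $m$ even, $(a^{1/m}b^{1/m})^{m^2}\notin H_3(\Z)$), so the exponent $d$ must be taken somewhat larger than $c$ --- but it still depends only on $c$, which is all your argument needs. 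Alternatively, you could simply invoke Hall's lemma for that step and keep only your dimension induction, which is essentially what Segal's exercise intends.
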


Now take $\Gamma$ as before and $\textbf{F}$ be the Zariski closure of $\Fitt(\Gamma)$ in the $\Q$-algebraic hull $\textbf{H}_\Gamma$. Since $\textbf{F}$ is a $\mathbb{Q}$-defined unipotent algebraic group, we have that $(\Fitt(\Gamma))^{\frac{1}{m}}$ is well defined.

\begin{defn}
A subgroup $\tilde{\Gamma}$ of $\textbf{H}_{\Gamma}$ which is of the form $\tilde{\Gamma} =(\Fitt(\Gamma))^{\frac{1}{m}} \cdot \Gamma$ is called a \emph{thickening} of $\Gamma$.
\end{defn}
\noindent 
This next lemma states that any thickening of a finitely generated virtually FAR \emph{WTN}-group $\Gamma$ is also finitely generated, as a straightforward consequence of Lemma \ref{thickening_index}.
\begin{lemma}\label{thickening_finite_index}
Let $\Gamma$ be any finitely generated virtually FAR \emph{WTN}-group with a thickening $\tilde{\Gamma}$. Then $[\tilde{\Gamma} : \Gamma] < \infty$. In particular, $\tilde{\Gamma}$ is finitely generated.
\end{lemma}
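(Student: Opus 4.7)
The plan is to deduce the index bound directly from Lemma \ref{thickening_index} applied inside $\textbf{F}$. Recall that $\textbf{F} = \overline{\Fitt(\Gamma)}$ is a $\Q$-defined unipotent algebraic group in which $\Fitt(\Gamma)$ is, by definition, Zariski-dense. Hence Lemma \ref{thickening_index} applies to $N = \Fitt(\Gamma)$ inside $\textbf{U} = \textbf{F}$, giving $[(\Fitt(\Gamma))^{\frac{1}{m}} : \Fitt(\Gamma)] < \infty$.

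Next I would check that $\tilde{\Gamma} = (\Fitt(\Gamma))^{\frac{1}{m}} \cdot \Gamma$ is a genuine subgroup of $\textbf{H}_\Gamma$. Since $\Fitt(\Gamma)$ is a characteristic subgroup of $\Gamma$, conjugation by any $g \in \Gamma$ preserves the defining condition $x^m \in \Fitt(\Gamma)$, so $\Gamma$ normalizes $(\Fitt(\Gamma))^{\frac{1}{m}}$. With this in hand, a short coset argument will give the bound. Writing a general element of $\tilde{\Gamma}$ as $a g$ with $a \in (\Fitt(\Gamma))^{\frac{1}{m}}$ and $g \in \Gamma$, two such elements lie in the same left coset of $\Gamma$ precisely when they differ by an element of $(\Fitt(\Gamma))^{\frac{1}{m}} \cap \Gamma$. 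This yields a bijection
$$
\tilde{\Gamma}/\Gamma \longleftrightarrow (\Fitt(\Gamma))^{\frac{1}{m}} / \bigl((\Fitt(\Gamma))^{\frac{1}{m}} \cap \Gamma\bigr).
$$
Since $\Fitt(\Gamma) \leq (\Fitt(\Gamma))^{\frac{1}{m}} \cap \Gamma$, the right-hand quotient is a further quotient of $(\Fitt(\Gamma))^{\frac{1}{m}}/\Fitt(\Gamma)$, so its cardinality is bounded by $[(\Fitt(\Gamma))^{\frac{1}{m}} : \Fitt(\Gamma)]$, which is finite by the previous paragraph. Thus $[\tilde{\Gamma} : \Gamma] < \infty$.

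For the last sentence of the lemma, I would invoke the standard fact that a finite extension of a finitely generated group is finitely generated: concretely, any finite set of generators of $\Gamma$ together with a finite set of coset representatives for $\Gamma$ in $\tilde{\Gamma}$ generate $\tilde{\Gamma}$. Since $\Gamma$ is finitely generated by hypothesis, this concludes the proof.

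There is no real obstacle here; the whole content is packaged in Lemma \ref{thickening_index}, and the remaining work is the elementary coset count together with the basic normalization check for $\Gamma$ acting on $(\Fitt(\Gamma))^{\frac{1}{m}}$.
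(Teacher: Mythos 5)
Your proof is correct and follows exactly the route the paper intends: the paper simply asserts the lemma is ``a straightforward consequence of Lemma \ref{thickening_index}'', and you supply precisely that deduction, namely the normalization of $(\Fitt(\Gamma))^{\frac{1}{m}}$ by $\Gamma$ and the coset bijection giving $[\tilde{\Gamma}:\Gamma] = [(\Fitt(\Gamma))^{\frac{1}{m}} : (\Fitt(\Gamma))^{\frac{1}{m}} \cap \Gamma] \leq [(\Fitt(\Gamma))^{\frac{1}{m}} : \Fitt(\Gamma)] < \infty$. The final step on finite generation is standard and correctly handled.
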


From now on, we fix a thickening $\tilde{\Gamma}$ of $\Gamma$. For any $\varphi \in \Aut(\Gamma)$, we write $\Psi \colon \textbf{H}_\Gamma \to \textbf{H}_{\Gamma}$ for the extension of $\varphi$ to $\textbf{H}_\Gamma$. Since $\Psi$ preserves the group of $\mathbb{Q}$-points of $\textbf{H}_\Gamma$ and preserves $\Gamma$, we claim that $\Psi$ must preserve $\tilde{\Gamma}$. Indeed, as $\tilde{\Gamma} = (\Fitt(\Gamma))^{\frac{1}{m}} \cdot \Gamma$ for some $m \in \mathbb{N}$, it is enough to demonstrate that $\Psi((\Fitt(\Gamma))^{\frac{1}{m}}) = (\Fitt(\Gamma))^{\frac{1}{m}}$, but this is immediate from the definitions.
  By restricting $\Psi$ to $\tilde{\Gamma}$, we see that there exists a natural inclusion $\Aut(\Gamma) \hookrightarrow \Aut(\tilde{\Gamma})$.

The following proposition shows that this identification $\Aut(\Gamma) \hookrightarrow \Aut(\tilde{\Gamma})$ gives a finite index subgroup. 
\begin{prop}\label{index_auto_group_thickening}
Let $\Gamma$ be a finitely generated virtually torsion-free FAR \emph{WTN}-group, and let $\tilde{\Gamma}$ be a thickening of $\Gamma$. Then the group 
$$
\Aut(\Gamma) = \{\psi \in \Aut(\tilde{\Gamma}) \: | \: \psi(\Gamma) = \Gamma \}
$$ 
is a subgroup of finite index in $\Aut(\tilde{\Gamma})$.
\end{prop}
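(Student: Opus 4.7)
The plan has two independent pieces: first, identify $\Aut(\Gamma)$ with the stated subgroup of $\Aut(\tilde{\Gamma})$; second, show that this subgroup has finite index.

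For the identification, I would use Corollary \ref{unipotent_on_fitting_subgroup}. Every $\varphi \in \Aut(\Gamma)$ extends uniquely to an algebraic automorphism $\Psi$ of $\textbf{H}_\Gamma$. Since $\Fitt(\Gamma)$ is characteristic in $\Gamma$, we have $\Psi(\Fitt(\Gamma)) = \Fitt(\Gamma)$, and hence $\Psi$ permutes the set $\{x \in \textbf{U}(\textbf{H}_\Gamma) \mid x^m \in \Fitt(\Gamma)\}$, so it preserves $\Fitt(\Gamma)^{1/m}$. Consequently $\Psi$ restricts to an automorphism of $\tilde{\Gamma} = \Fitt(\Gamma)^{1/m}\cdot \Gamma$ that stabilizes $\Gamma$. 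Conversely, restriction of such a $\psi \in \Aut(\tilde{\Gamma})$ to $\Gamma$ clearly lies in $\Aut(\Gamma)$, and this restriction is the inverse assignment by uniqueness of the algebraic extension (both $\Psi$ and $\psi$ agree on the Zariski-dense subgroup $\Gamma$).

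For the finite index statement, the key input is that $\tilde{\Gamma}$ is finitely generated and $n := [\tilde{\Gamma} : \Gamma]$ is finite, both given by Lemma \ref{thickening_finite_index}. Any $\psi \in \Aut(\tilde{\Gamma})$ sends $\Gamma$ to a subgroup $\psi(\Gamma) \leq \tilde{\Gamma}$ of the same index $n$. By the classical fact that a finitely generated group has only finitely many subgroups of any fixed finite index (via kernels of actions on $\tilde{\Gamma}/\Gamma$), the collection of index-$n$ subgroups of $\tilde{\Gamma}$ is finite. The group $\Aut(\tilde{\Gamma})$ acts on this finite set, and $\{\psi \in \Aut(\tilde{\Gamma}) \mid \psi(\Gamma) = \Gamma\}$ is exactly the stabilizer of $\Gamma$. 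By orbit-stabilizer, this stabilizer has finite index in $\Aut(\tilde{\Gamma})$.

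I do not foresee a real obstacle here: the extension step is essentially bookkeeping once one knows $\Fitt(\Gamma)^{1/m}$ is characterized intrinsically inside $\textbf{H}_\Gamma$ using $\Fitt(\Gamma)$, and the finite-index step is a direct orbit-stabilizer argument on a finite set. The only place one must be a little careful is to note that the two maps between $\Aut(\Gamma)$ and the stabilizer subgroup are mutually inverse, which is automatic from Zariski density of $\Gamma$ in $\textbf{H}_\Gamma$ together with the uniqueness clause in Corollary \ref{unipotent_on_fitting_subgroup}.
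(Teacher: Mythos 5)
Your proposal is correct and follows essentially the same route as the paper: the identification of $\Aut(\Gamma)$ with the stabilizer of $\Gamma$ inside $\Aut(\tilde{\Gamma})$ is handled via the unique algebraic extension to $\textbf{H}_\Gamma$ and the invariance of $(\Fitt(\Gamma))^{\frac{1}{m}}$ (which the paper does in the discussion immediately preceding the proposition), and the finite-index claim is the same orbit--stabilizer argument on the finite set of index-$n$ subgroups of the finitely generated group $\tilde{\Gamma}$.
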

\begin{proof}
Proposition \ref{thickening_finite_index} implies that $[\tilde{\Gamma}: \Gamma]< \infty$. Write  $m=[\tilde{\Gamma} : \Gamma]$, then there are only finitely many subgroups of $\tilde{\Gamma}$ of index $m$ as $\tilde{\Gamma}$ is finitely generated. Therefore, $\Aut(\tilde{\Gamma})$ acts on this set of subgroups where $\Aut(\Gamma)$ is the subgroup which stabilizes $\Gamma$. Therefore, $[\Aut(\tilde{\Gamma}) : \Aut(\Gamma)]< \infty$.
\end{proof}

\subsection{Thickenings of $\Gamma$ admit supplements}
We introduce the theory of supplements for the Fitting subgroup of a finitely generated virtually FAR \emph{WTN}-group.

\begin{defn}
Let $\Gamma$ be a virtually solvable group, and let $C \leq \Gamma$ be a virtually nilpotent subgroup. We call $C$ a \emph{virtually nilpotent supplement in $\Gamma$} if and only if $\Gamma = \Fitt(\Gamma) \cdot C$.
\end{defn}
\noindent For a general solvable group, virtually nilpotent supplements do not have to exist, even in the context of polycyclic groups for which an example is given after \cite[Corollary 10.4.4]{lennox_robinson}. However, by replacing $\Gamma$ by a thickening, we show that there exists a virtually nilpotent supplement that is maximal with respect to inclusion of subgroups.

\begin{prop}\label{supplement_thickening}
Let $\Gamma$ be a finitely generated virtually torsion-free FAR \emph{WTN}-group and take $\textbf{C} \leq \textbf{H}_\Gamma$ a $\mathbb{Q}$-defined Cartan subgroup of $\textbf{H}_\Gamma$. There exists a $m \in \mathbb{N}$ such that $\tilde{\Gamma} = (\Fitt(\Gamma))^{\frac{1}{m}} \cdot \Gamma$ satisfies
$$
\tilde{\Gamma} = (\Fitt(\Gamma))^{\frac{1}{m}} \cdot C
$$
with $C = \tilde{\Gamma} \cap \textbf{C}$ a virtually nilpotent complement for $\tilde{\Gamma}$. Moreover, the group $C$ is maximal with respect to inclusion under these conditions, Zariski dense in $\textbf{C}$, and all maximal virtually nilpotent suplements for $\tilde{\Gamma}$ arise from this construction.
\end{prop}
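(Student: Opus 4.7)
The strategy is to construct $m$ explicitly from the decomposition $\textbf{H}_\Gamma = \textbf{F} \cdot \textbf{C}$ of the previous proposition. Pick a finite generating set $\gamma_1,\ldots,\gamma_n$ of $\Gamma$ and write each $\gamma_i = f_i c_i$ with $f_i \in \textbf{F}(\Q)$ and $c_i \in \textbf{C}(\Q)$. Since $\tau(\Gamma)=1$ forces $\Fitt(\Gamma)$ to be torsion-free, and $\Fitt(\Gamma)$ is Zariski-dense in the unipotent group $\textbf{F}$, the rational Mal'tsev completion of $\Fitt(\Gamma)$ is precisely $\textbf{F}(\Q)$, so each $f_i$ has some power in $\Fitt(\Gamma)$. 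Taking $m_0$ to be a common multiple of these powers ensures $f_i \in (\Fitt(\Gamma))^{1/m_0}$ and hence $c_i = f_i^{-1}\gamma_i \in (\Fitt(\Gamma))^{1/m_0} \cdot \Gamma$.

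Next verify that the product $(\Fitt(\Gamma))^{1/m_0} \cdot \Gamma$ is a subgroup: for $x \in (\Fitt(\Gamma))^{1/m_0}$ and $\gamma \in \Gamma$, we have $\gamma x \gamma^{-1} \in \textbf{F}$ by normality of $\textbf{F}$, and $(\gamma x \gamma^{-1})^{m_0} = \gamma x^{m_0} \gamma^{-1} \in \Fitt(\Gamma)$, so $\gamma x \gamma^{-1} \in (\Fitt(\Gamma))^{1/m_0}$. Expanding any element of $\Gamma$ as a word in the $\gamma_i$ and pushing all $\textbf{F}$-factors to the left via this normalization action yields $\Gamma \subseteq (\Fitt(\Gamma))^{1/m_0} \cdot C_0$ with $C_0 = \tilde\Gamma \cap \textbf{C}$. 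Thus $\tilde\Gamma = (\Fitt(\Gamma))^{1/m_0} \cdot C_0$, and $C_0$ is nilpotent as a subgroup of the nilpotent Cartan $\textbf{C}$.

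For the Zariski density of $C$ in $\textbf{C}$, the quotient map $\pi \colon \textbf{H}_\Gamma \to \textbf{H}_\Gamma/\textbf{F} \cong \textbf{C}/(\textbf{C}\cap\textbf{F})$ satisfies $\pi(C_0) = \pi(\tilde\Gamma) \supseteq \pi(\Gamma)$, and the latter is Zariski dense by Zariski-density of $\Gamma$; hence $\overline{C_0} \cdot (\textbf{C}\cap\textbf{F}) = \textbf{C}$. To close the gap and obtain $\overline{C} = \textbf{C}$, enlarge $m_0$ to $m$ so that $(\Fitt(\Gamma))^{1/m}$ contains a finitely generated Zariski-dense subgroup of the $\Q$-defined unipotent group $\textbf{C}\cap\textbf{F}$ (such a subgroup exists, and each of its elements has a power in $\Fitt(\Gamma)$ by the Mal'tsev-completion argument above). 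Then $C = \tilde\Gamma \cap \textbf{C}$ contains this dense set, so $\overline{C} \supseteq \textbf{C}\cap\textbf{F}$, giving $\overline{C} = \textbf{C}$.

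Finally, the maximality of $C$ and the converse statement both rely on Cartan subgroup theory. Given a virtually nilpotent supplement $D \supseteq C$, the closure $\overline{D}$ is virtually nilpotent and contains $\overline{C} = \textbf{C}$; since $\textbf{C}$ is a maximal connected nilpotent subgroup of $\textbf{H}_\Gamma^\circ$ and self-normalizing there, this forces $\overline{D}^\circ = \textbf{C}$, and \textbf{AG3} then yields $\overline{D} \leq \textbf{C}$, so $D = \tilde\Gamma \cap \textbf{C} = C$. Conversely, for any maximal virtually nilpotent supplement $D$, the connected component $\overline{D}^\circ$ lies in a Cartan subgroup of $\textbf{H}_\Gamma^\circ$; by \textbf{SG4} this Cartan subgroup is conjugate to $\textbf{C}$, and replacing $\textbf{C}$ by this conjugate realizes $D$ through the same construction. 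The main obstacle I anticipate is this last step: tracking the Cartan structure under passage to Zariski closures and handling the possible non-connectedness of $\textbf{H}_\Gamma$ and $\overline{D}$, though the facts \textbf{SG1}--\textbf{SG5} combined with \textbf{AG3} should suffice once the finite-index bookkeeping is done carefully.
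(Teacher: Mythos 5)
Your construction of $m$ and the verification that $\tilde{\Gamma}=(\Fitt(\Gamma))^{1/m}\cdot C$ with $C=\tilde{\Gamma}\cap\textbf{C}$ follows the same route as the paper (the paper works with generators of $(\Gamma\cdot\textbf{F})/\textbf{F}$ rather than of $\Gamma$, but the mechanism is identical). Your density argument is genuinely different and is sound: the paper deduces density from maximality via the inclusion $\Fitt(\tilde\Gamma)\cap\textbf{C}\leq C$, whereas you enlarge $m$ so that $(\Fitt(\Gamma))^{1/m}\cap\textbf{C}$ already contains a finitely generated Zariski-dense subgroup of $\textbf{F}\cap\textbf{C}$; this buys you independence from the maximality claim, at the cost that density is only arranged for the one chosen Cartan subgroup $\textbf{C}$ rather than for every maximal supplement simultaneously. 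Your maximality argument via self-normalization of $\textbf{C}$ and the normalizer condition in the nilpotent group $\overline{D}^\circ$ also works, although two of your justifications are off: a Cartan subgroup is \emph{not} a maximal connected nilpotent subgroup of $\textbf{H}_\Gamma^\circ$ (the unipotent radical is typically a larger one), and \textbf{AG3} is not what gives $\overline{D}\leq\textbf{C}$; what does is that $\overline{D}$ normalizes the unique maximal torus $\textbf{T}$ of $\overline{D}^\circ=\textbf{C}$ and $\textbf{C}=N_{\textbf{H}_\Gamma}(\textbf{T})$ by definition.

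The genuine gap is the converse statement, which you flag but do not close. Your proposed step ``the connected component $\overline{D}^\circ$ lies in a Cartan subgroup of $\textbf{H}_\Gamma^\circ$'' is false for a general connected nilpotent subgroup: again $\textbf{U}(\textbf{H}_\Gamma)$ is connected and nilpotent but is contained in no Cartan subgroup unless $\textbf{H}_\Gamma$ is nilpotent. The property you must use is precisely that $D$ is a \emph{supplement}: $\tilde{\Gamma}=\Fitt(\tilde\Gamma)\cdot D$ is Zariski dense, hence $\textbf{H}_\Gamma=\textbf{F}\cdot\overline{D}$ with $\textbf{F}$ unipotent, so a maximal $d$-subgroup of $\overline{D}$ is already a maximal $d$-subgroup of $\textbf{H}_\Gamma$; in particular $\overline{D}$ contains a maximal $\mathbb{Q}$-defined torus $\textbf{T}$ of $\textbf{H}_\Gamma$, which is unique in the virtually nilpotent group $\overline{D}$ and hence normal in it, giving $\overline{D}\leq N_{\textbf{H}_\Gamma}(\textbf{T})=\textbf{C}'$ and therefore $D\leq\tilde{\Gamma}\cap\textbf{C}'$. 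One then still has to observe that $\tilde{\Gamma}\cap\textbf{C}'$ is itself a virtually nilpotent supplement, so that maximality of $D$ forces $D=\tilde{\Gamma}\cap\textbf{C}'$; and conjugacy of Cartan subgroups (\textbf{SG4}) is not needed for this and would not by itself transport your density enlargement of $m$ to the conjugate Cartan, since conjugation by an element of $[\textbf{H}_\Gamma^\circ,\textbf{H}_\Gamma^\circ]\leq\textbf{F}$ need not preserve $(\Fitt(\Gamma))^{1/m}$. Without the torus argument above, the converse direction does not go through.
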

\begin{proof}
We start by constructing a virtually nilpotent supplement for $\textbf{F}$ in $\Gamma \cdot \textbf{F}$. By definition we have a decomposition $\textbf{H}_\Gamma = \textbf{F} \cdot \textbf{C}$. Therefore, for any $g \in \Gamma \subset \textbf{H}_\Gamma$, we can write $g = n_g c_g$ where $n_g \in \textbf{F}$ and $c_g \in \textbf{C} \cap (\Gamma \cdot \textbf{F})$. Hence, we have $\Gamma \cdot \textbf{F} = \tilde{C} \cdot \textbf{F}$ where $\tilde{C} = \left(\Gamma \cdot \textbf{F}\right) \cap \textbf{C}$. 

Now we use $\tilde{C}$ to find a supplement for some thickening $\tilde{\Gamma}$. The natural map $\tilde{C} \to (\Gamma \cdot  \textbf{F}) / \textbf{F}$ is surjective. Given that the latter group is finitely generated, there exist elements $c_1, \ldots, c_k \in \tilde{C}$ such that the images generate $(\Gamma \cdot \textbf{F})/ \textbf{F}$. Write $C \subset \textbf{H}_\Gamma$ for the group generated by the $c_i$ for $i=1, \ldots, k$. If we write each $c_i = g_i n_i$ with $g_i \in \Gamma$ and $n_i \in \textbf{F}$, there exists $m \in \mathbb{N}$ such that $n_i \in (\Fitt(\Gamma))^{\frac{1}{m}}$ for any $i = 1, \ldots, k$. Write $\tilde{\Gamma} = \Gamma \cdot (\Fitt(\Gamma))^{\frac{1}{m}}$, then $C \subset \tilde{\Gamma}$ and hence also $(\Fitt(\Gamma))^{\frac{1}{m}} \cdot C \leq \tilde{\Gamma}$. Given that the map $C \to (\Gamma \cdot \textbf{F})  / \textbf{F}$ is surjective, we have that each $g \in \Gamma$ is of the form $g = cn$ where $c \in C$ and $n \in \textbf{F} \cap \tilde{\Gamma} = (\Fitt(\Gamma))^{\frac{1}{m}}$. Hence, $\tilde{\Gamma} = (\Fitt(\Gamma))^{\frac{1}{m}} \cdot C$, and thus, $C$ is a virtually nilpotent supplement for $\textbf{F}$ in $\tilde{\Gamma}$.

If $C$ is any virtually nilpotent supplement for $\Fitt(\tilde{\Gamma})$, we will show there there exists a $\mathbb{Q}$-defined Cartan subgroup $\textbf{C}$ of the $\Q$-algebraic hull $\textbf{H}_\Gamma$ of $\Gamma$ such that $C \leq \tilde{\Gamma} \cap \textbf{C}$. Let $\overline{C}$ be the Zariski closure of $C$, then by definition, $\overline{C}$ is defined over $\mathbb{Q}$. Given that $\tilde{\Gamma} = (\Fitt(\Gamma))^{\frac{1}{m}} \cdot C$ is Zariski dense in $\textbf{H}_\Gamma$, we have that $\textbf{H}_\Gamma = \textbf{F} \cdot \overline{C}$. Let $\textbf{S}$ be a maximal $d$-subgroup of $\overline{C}$, then by the previous equality $\textbf{S}$ is also a maximal $d$-subgroup in $\textbf{H}_\Gamma$. Thus, $\overline{C}$ contains a maximal $\mathbb{Q}$-defined torus $\textbf{T}$ of $\textbf{H}_\Gamma$. Given that $\overline{C}$ is virtually nilpotent, we have that $\textbf{T}$ is unique and hence a normal subgroup of $\overline{C}$. Write $\textbf{C} = N_{\textbf{H}_\Gamma}(\textbf{T})$ for the Cartan subgroup corresponding to $\textbf{T}$. We then have $\overline{C} \leq \textbf{C}$. Therefore, $C \leq \tilde{\Gamma} \cap \textbf{C}$. Hence, if $C$ is a virtually nilpotent supplement for $(\Fitt(\Gamma))^{\frac{1}{m}}$, then there exists a $\mathbb{Q}$-defined Cartan subgroup $\textbf{C} \leq \textbf{H}_\Gamma$ such that $C \leq \tilde{\Gamma} \cap \textbf{C}$. Moreover, the group $\textbf{C}$ is uniquely characterized by $C$. As $\tilde{\Gamma} \cap \textbf{C}$ is itself a virtually nilpotent supplement, the maximal ones are exactly equal to $\tilde{\Gamma} \cap \textbf{C}$. 

We are left to demonstrate that every maximal virtually nilpotent supplement $C$ of $(\Fitt(\Gamma))^{\frac{1}{m}}$ is Zariski dense in the Cartan subgroup $\textbf{C}$ described above. By the previous discussion, we know that a maximal $d$-subgroup $\textbf{S}$ of $\overline{C}$ is also a maximal $d$-subgroup of $\textbf{C}$. Thus, it suffices to show that $\textbf{U}(\overline{C}) = \textbf{U}(\textbf{C})$. 
So take any $u \in \textbf{U}(\textbf{C})$, which we can write as $u = fu_1$ where $u_1 \in \textbf{U}(\overline{C})$ and $f \in \textbf{F} \cap \textbf{C}$. Since $C$ is maximal, we have $\Fitt(\Gamma) \cap \textbf{C} \leq C$. As $\Fitt(\Gamma) \cap \textbf{C}$ is Zariski dense in $ \textbf{F} \cap \textbf{C}$, we get $\textbf{F} \cap \textbf{C} \leq \overline{C}$ which implies $f \in \overline{C}$ and hence also $u \in \overline{C}$. We conclude that $\overline{C} = \textbf{C}$.
\end{proof}
For the remainder of this article, we will be mainly using the existence of a maximal virtually nilpotent supplement for a certain thickening.
\begin{cor}
For every finitely generated virtually torsion-free FAR \emph{WTN}-group $\Gamma$, there exists a thickening of $\Gamma$ whose Fitting subgroup admits a maximal virtually nilpotent supplement.
\end{cor}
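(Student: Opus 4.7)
The plan is to read the corollary as essentially a restatement of Proposition~\ref{supplement_thickening}, once we verify that the subgroup $(\Fitt(\Gamma))^{\frac{1}{m}}$ appearing there coincides with the Fitting subgroup of the thickening $\tilde{\Gamma}$. Concretely, I would start by invoking $\textbf{SG1}$ and $\textbf{SG2}$ of Subsection~\ref{background_linear_algebraic_groups} to produce a $\mathbb{Q}$-defined maximal torus in $\textbf{H}_\Gamma$ and the associated $\mathbb{Q}$-defined Cartan subgroup $\textbf{C} \leq \textbf{H}_\Gamma$. Then I would apply Proposition~\ref{supplement_thickening} with this $\textbf{C}$ to obtain an integer $m$, the thickening $\tilde{\Gamma} = (\Fitt(\Gamma))^{\frac{1}{m}} \cdot \Gamma$, and the maximal virtually nilpotent supplement $C = \tilde{\Gamma} \cap \textbf{C}$ for $(\Fitt(\Gamma))^{\frac{1}{m}}$ in $\tilde{\Gamma}$.

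The remaining point is the identification $\Fitt(\tilde{\Gamma}) = (\Fitt(\Gamma))^{\frac{1}{m}}$. For this, I would first observe that $(\textbf{H}_\Gamma, \iota)$ is also the $\mathbb{Q}$-algebraic hull of $\tilde{\Gamma}$: Zariski density follows from that of $\Gamma$, the strong unipotent radical condition $(ii)$ is intrinsic to $\textbf{H}_\Gamma$, and $(iii)$ holds because $h(\tilde{\Gamma}) = h(\Gamma)$ as $\tilde{\Gamma}$ is a finite extension of $\Gamma$ by Lemma~\ref{thickening_finite_index}. (One should also note that $\tilde{\Gamma}$ is itself a WTN-group, which follows from the same rigidity-of-tori argument used earlier to establish the necessity of the WTN condition for the existence of a $\Q$-algebraic hull.) Applying Proposition~\ref{unipotent_hull_fitt_alg_hull} to $\tilde{\Gamma}$ then yields $\Fitt(\tilde{\Gamma}) = \tilde{\Gamma} \cap \textbf{U}(\textbf{H}_\Gamma)$.

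Finally, I would compute this intersection explicitly. Since $(\Fitt(\Gamma))^{\frac{1}{m}}$ lies inside the connected unipotent group $\textbf{F} \leq \textbf{U}(\textbf{H}_\Gamma)$, we have
\[
\tilde{\Gamma} \cap \textbf{U}(\textbf{H}_\Gamma) = \bigl((\Fitt(\Gamma))^{\frac{1}{m}} \cdot \Gamma\bigr) \cap \textbf{U}(\textbf{H}_\Gamma) = (\Fitt(\Gamma))^{\frac{1}{m}} \cdot \bigl(\Gamma \cap \textbf{U}(\textbf{H}_\Gamma)\bigr),
\]
and a second application of Proposition~\ref{unipotent_hull_fitt_alg_hull}, this time to $\Gamma$, shows that $\Gamma \cap \textbf{U}(\textbf{H}_\Gamma) = \Fitt(\Gamma) \leq (\Fitt(\Gamma))^{\frac{1}{m}}$. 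Hence $\Fitt(\tilde{\Gamma}) = (\Fitt(\Gamma))^{\frac{1}{m}}$, and the subgroup $C$ from Proposition~\ref{supplement_thickening} is the desired maximal virtually nilpotent supplement for $\Fitt(\tilde{\Gamma})$ in $\tilde{\Gamma}$. The main (and really only) subtlety here is checking that the Fitting subgroups line up correctly; everything else is essentially packaging Proposition~\ref{supplement_thickening}.
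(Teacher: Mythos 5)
Your proposal is correct and follows the same route the paper intends: the corollary is stated there as an immediate consequence of Proposition~\ref{supplement_thickening}, with no separate proof given. Your extra step identifying $\Fitt(\tilde{\Gamma})$ with $(\Fitt(\Gamma))^{\frac{1}{m}}$ via Proposition~\ref{unipotent_hull_fitt_alg_hull} and the modular-law computation $\bigl((\Fitt(\Gamma))^{\frac{1}{m}}\cdot\Gamma\bigr)\cap\textbf{U}(\textbf{H}_\Gamma)=(\Fitt(\Gamma))^{\frac{1}{m}}$ correctly fills in the one detail the paper leaves implicit (and later uses without comment in the definition of a good unipotent shadow).
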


Let $G$ be a group containing an abelian normal subgroup $A \trianglelefteq G$. We call a subgroup $H \subset G$ a complement for $A$ in $G$ if $H A = G$ and $H \cap A = \{e\}$. In particular, every complement leads to an isomorphism $G \approx A \rtimes H$ where $H$ acts on $A$ via conjugation. 
\begin{prop}
	\label{prop:segalgeneral}
Let $G$ be a nilpotent FAR group acting on a torsion-free abelian FAR group $A$. If $C_A(G)= 0$, then the number of complements for $A$ in $A \rtimes G$ up to conjugation is finite. 
\end{prop}
\begin{proof}
By \cite[10.3.6, page 216]{lennox_robinson} for $R = \Z$, the cohomology group $H^1(G,A)$ is bounded. Since $A$ has finite rank, this immediately implies that $H^1(G,A)$ is finite, and hence the proposition follows by the one-to-one correspondence between complements up to conjugation and $H^1(G,A)$.
\end{proof}

We end this section by demonstrating that there are at most finitely many $(\Fitt(\Gamma))^{\frac{1}{m}}$-conjugacy classes of virtually nilpotent supplements for an appropriate choice of thickening of $\Gamma$, by first doing the nilpotent case. Recall that an action of a group $\Gamma$ on $H$ is called \emph{nilpotent} if there exists a series of $\Gamma$-invariant normal subgroups $\{H_i\}$ of $H$ such that the induced action of $\Gamma$ on $H_i / H_{i-1}$ is trivial for all $i$. When $H = \Gamma$ and the action is conjugation, we see that $\Gamma$ acts nilpotently on itself if and only if $\Gamma$ is nilpotent.

\begin{prop}\label{finitely_many_nilpotent_complements}
Let $\Gamma$ be a finitely generated FAR \emph{WTN}-group, and let $N$ be a torsion-free nilpotent normal subgroup. There are at most finitely many $N$-conjugacy classes of maximal nilpotent supplements for $N$ in $\Gamma$.
\end{prop}
\begin{proof}
Denote the set of maximal nilpotent supplement for $N$ in $\Gamma$ as $\mathcal{C}$, which we can assume to be non-empty and thus $\Gamma/N$ is nilpotent. If $\Gamma$ acts nilpotently on $N$ by conjugation, then $G$ is itself nilpotent and thus $\mathcal{C} = \{\Gamma\}$. Hence we can assume that $\Gamma$ does not act nilpotently on $N$. 

Let $M = \pi^{-1}(\tau(N/\gamma_2(N)))$ where $\pi \colon N \to N/\gamma_2(N)$ is the natural projection.  We then see  by \cite[Proposition 3, page 49]{segal} that $\Gamma$ does not act nilpotently on $N/M$. Let $K/M$ be the maximal $\Gamma$-submodule of $N/M$ on which $\Gamma$ acts nilpotently. Since $K/M$ is maximal, we must have $C_{N/K}(\Gamma) = \{1\}$, and by \cite[Exercise 3, page 49]{segal} we may assume that $N/K$ has no non-trivial torsion. Hence, $h(K) < h(N)$. 

Now let $C \in \mathcal{C}$, then $C_{C}(N/K) =\{1\}$ as $C \subset \Gamma$. Since $C$ is nilpotent, we have that $C$ acts nilpotently on $(K \cdot C \cap N)/K$ and thus also $\Gamma = N \cdot C$ acts nilpotently $(K \cdot C \cap N)/K$. This implies by the assumption on $K$ that $(K \cdot C) \cap N = K$. Hence, $K \cdot C/K$ is a nilpotent complement for $N/K$ in $\Gamma/K$. Proposition \ref{prop:segalgeneral} then implies that the set of all such complements breaks up into finitely may conjugacy classes in $\Gamma/K$. Hence, there exist subgroups $C_1, \ldots, C_\ell \in \mathcal{C}$ with
$$
	\mathcal{C} = \bigcup_{i=1}^\ell \bigcup_{g \in N} \mathcal{C}_i^g
	$$
	where for $i = 1, \ldots, \ell$ the set $\mathcal{C}_i$ is equal to 
$$
\mathcal{C}_i = \{ C \in \mathcal{C} \: | \: KC = KC_i\}.
$$ 
We see that each member of $\mathcal{C}$ is a maximal nilpotent supplement for $K$ in the group $KC_i$. Since $h(K) < h(N)$, the inductive hypothesis implies the set $\mathcal{C}_i$ is a union of finitely many conjugacy classes of subgroups in $KC_i$. Subsequently, $\mathcal{C}$ is the union of finitely many conjugacy classes of subgroups.
\end{proof}

\begin{prop}\label{finitely_many_thickenings}
Let $\Gamma$ be a finitely generated FAR \emph{WTN}-group, and let $\tilde{\Gamma} = \Fitt(\Gamma)^{\frac{1}{m}} \cdot \Gamma$ be a thickening of $\Gamma$ as in Proposition \ref{supplement_thickening}. Then there are at most finitely many $\Fitt(\Gamma)^{\frac{1}{m}}$-conjugacy classes of maximal virtually nilpotent supplements in $\Gamma$.
\end{prop}
\begin{proof}
Let $C$ be a maximal virtually nilpotent supplement in $\tilde{\Gamma}$. Proposition \ref{supplement_thickening}  implies that $C = \tilde{\Gamma} \cap \textbf{C}$ where $\textbf{C}$ is a Cartan subgroup of $\textbf{H}_\Gamma$. Now consider $\tilde{\Gamma}_0 = \tilde{\Gamma} \cap \textbf{H}_\Gamma^\circ$ and $C_0 =C \cap \tilde{\Gamma}_0$. We see that $\tilde{\Gamma}_0$ is a finite index normal subgroup of $\tilde{\Gamma}$ where $\tau(\tilde{\Gamma}_0) = 1$ and that $C_0 \trianglelefteq C$.  Given that $\textbf{C}^\circ$ is a Cartan subgroup in $\textbf{H}_\Gamma^\circ$, Proposition \ref{supplement_thickening} implies that $C_0$ is a maximal nilpotent supplement in $\tilde{\Gamma}_0$.  We also see that $C = \tilde{\Gamma} \cap N_{\Gamma \cdot \textbf{F}}(C_0)$ is uniquely determined by $C_0$. By Proposition \ref{finitely_many_nilpotent_complements} applied to $\Fitt(\tilde{\Gamma})$, there are only finitely many $\Fitt(\Gamma)^{\frac{1}{m}}$-conjugacy classes of maximal nilpotent supplements $C_0 \leq \tilde{\Gamma}_0$. Hence, we have only finitely many $\Fitt(\Gamma)^{\frac{1}{m}}$-conjugacy classes of maximal virtually nilpotent supplements in $\tilde{\Gamma}$.
\end{proof}

\subsection{Unipotent shadows of solvable groups of finite abelian ranks}

Let $\textbf{C} = \overline{C} \leq \textbf{H}_\Gamma$ denote the Zariski closure of $C$. By Proposition \ref{supplement_thickening}, the group $\textbf{C}$ is a $\mathbb{Q}$-defined Cartan subgroup of $\textbf{H}_\Gamma$ and $C = \tilde{\Gamma} \cap \textbf{C}$. Now let $\textbf{S} \leq \textbf{C}$ be a maximal $\mathbb{Q}$-defined $d$-subgroup, which contains the torus $T = \textbf{S}^\circ$ that is central in $\textbf{C}^\circ$. In fact, $\textbf{C} = N_{\textbf{H}_\Gamma}(T)$. We now set $C_0 = C \cap \textbf{C}^\circ$ which is a finite index nilpotent normal subgroup of $C$.

Consider the splittings $\textbf{C} = \textbf{U}(\textbf{C}) \cdot \textbf{S}$ and $\textbf{C}^\circ = \textbf{U}(\textbf{C}) \cdot \textbf{S}^\circ.$ We can then write each $c \in \textbf{C}$ uniquely as $c = u_c  s_c$ where $u_c \in \textbf{U}(\textbf{C})$ and $s_c \in \textbf{S}$. We note that if $c \in \textbf{C}^\circ$, then $s_c \in \textbf{S}^\circ$. Therefore, we define
$$
U_{C, \textbf{S}} = \left< u_c \: | \: c \in C \right> \quad \text{ and } \quad U_{C_0} = \left< u_c \: | \: c \in C_0 \right>,
$$
where the latter does not depend on $\textbf{S}$, which is also clear from the proof below. 

\begin{lemma}\label{finite_generator_piece_unipotent_shadow}
The subgroup $U_{C_0} \leq U_{C, \textbf{S}}$ is of finite index and both are Zariski-dense in $\textbf{U}(\textbf{C})$. Finally, $U_{C, \textbf{S}}$ is normalized by $C$ and $U_{C_0}$ is centralized by $C_0$.
\end{lemma}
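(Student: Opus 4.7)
The central structural input is that $\textbf{C}$ decomposes as a direct product $\textbf{C} = \textbf{U}(\textbf{C}) \times \textbf{S}$. The torus $\textbf{S}^\circ$ is central in $\textbf{C}^\circ$ since the latter is connected nilpotent, and the nilpotency of $\textbf{C}$ forces the finite semisimple quotient $\textbf{S}/\textbf{S}^\circ$ also to act trivially on $\textbf{U}(\textbf{C})$ by conjugation; indeed, any finite-order semisimple automorphism of a unipotent group yielding a nilpotent extension must be trivial, as one sees by passing to the Lie algebra and observing that the condition $(s-1)^k = 0$ with $s$ semisimple forces every eigenvalue of $s$ to equal $1$. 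Thus $\pi_u : \textbf{C} \to \textbf{U}(\textbf{C})$, $c \mapsto u_c$, is a surjective morphism of algebraic groups with kernel $\textbf{S}$, and both $U_{C, \textbf{S}} = \pi_u(C)$ and $U_{C_0} = \pi_u(C_0)$ are honest subgroups of $\textbf{U}(\textbf{C})$.

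Zariski density is now automatic: $C$ is dense in $\textbf{C} = \overline{C}$ by construction, and $C_0 = C \cap \textbf{C}^\circ$ has finite index $[\textbf{C}:\textbf{C}^\circ]$ in $C$, so $C_0$ is dense in $\textbf{C}^\circ$. Pushing forward via $\pi_u$ shows $U_{C_0}$ is dense in $\pi_u(\textbf{C}^\circ) = \textbf{U}(\textbf{C})$, and the larger $U_{C, \textbf{S}} \supseteq U_{C_0}$ is dense a fortiori.

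For the finite index claim, I would pick coset representatives $c_1 = e, c_2, \ldots, c_n$ of $C_0$ in $C$; every element $u \in U_{C, \textbf{S}}$ has the form $u = \pi_u(c)$ for some $c = d c_i$ with $d \in C_0$, whence $u = \pi_u(d) \, u_{c_i} \in U_{C_0} \cdot u_{c_i}$ and hence $[U_{C, \textbf{S}} : U_{C_0}] \leq n < \infty$. The normalization of $U_{C, \textbf{S}}$ by $C$ is then immediate from the homomorphism property together with the centrality of $s_c$: one computes $c \, u_{c'} \, c^{-1} = u_c u_{c'} u_c^{-1} = \pi_u(c c' c^{-1}) \in U_{C, \textbf{S}}$.

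Finally, the centralization of $U_{C_0}$ by $C_0$ reduces, via the same centrality of $s_d \in \textbf{S}^\circ$, to the identity $u_d u_{d'} u_d^{-1} = u_{d'}$ for all $d, d' \in C_0$, i.e., to the abelianness of $U_{C_0}$. I expect this to be the main obstacle, since $\textbf{U}(\textbf{C})$ is a priori any nilpotent unipotent algebraic group. The natural strategy is to exploit the isomorphism $U_{C_0} \cong C_0 / (C_0 \cap \textbf{S})$ (whose kernel is central in $C_0$) together with the maximality of the virtually nilpotent supplement $C$ from Proposition \ref{supplement_thickening}, in order to force the projection of $C_0$ into $\textbf{U}(\textbf{C})$ to be commutative.
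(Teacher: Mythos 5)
Your argument hinges on the assertion that $\textbf{C} = \textbf{U}(\textbf{C}) \times \textbf{S}$ is a direct product, deduced from ``the nilpotency of $\textbf{C}$''. That premise is not available: \textbf{SG2} gives nilpotency only for the Cartan subgroup of the \emph{connected} group, i.e.\ of $\textbf{C}^\circ$, and the setup of this section explicitly allows $\textbf{C}$ to be disconnected with $C$ only virtually nilpotent (this is why $C_0 = C \cap \textbf{C}^\circ$ is introduced at all). A component of $\textbf{S}$ outside $\textbf{S}^\circ$ is a finite-order semisimple element of a group that is merely \emph{virtually} nilpotent, and such an element can act nontrivially on $\textbf{U}(\textbf{C})$ --- compare $\Q^2 \rtimes \Z/2\Z$ with the coordinate swap, which is virtually abelian but where the order-two semisimple part does not centralize the unipotent radical. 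Hence $c \mapsto u_c$ is a homomorphism only on $\textbf{C}^\circ$; on all of $C$ one only has the cocycle identity $u_{cd} = u_c\,(s_c u_d s_c^{-1})$. Your density argument (pushing $C_0$ through the projection $\textbf{C}^\circ \to \textbf{U}(\textbf{C})$) survives and agrees with the paper, but your finite-index and normalization arguments use $\pi_u$ as a homomorphism on all of $C$ and so do not go through as written. The paper instead runs everything through the cocycle identity together with the observation that the conjugation action of $S = \lambda(C)$ on $\textbf{U}(\textbf{C})$ factors through the finite quotient $S/S_0$, because $S_0 \leq \textbf{S}^\circ$ is central in $\textbf{C}^\circ$; your two arguments would need to be repaired along those lines.

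For the final claim, your reduction is correct: since $s_c$ is central in $\textbf{C}^\circ$, conjugation by $c \in C_0$ on $U_{C_0}$ coincides with conjugation by $u_c$, so ``$C_0$ centralizes $U_{C_0}$'' is equivalent to $U_{C_0}$ (equivalently its Zariski closure $\textbf{U}(\textbf{C})$) being abelian. You then stop, proposing but not executing a strategy via the maximality of $C$. This is an admitted gap, so the proposal is incomplete even after the repairs indicated above. That said, you have isolated a genuine subtlety: the paper's justification of this point invokes only the centrality of $\textbf{S}^\circ$ in $\textbf{C}^\circ$, which disposes of the semisimple part $s_c$ but does not visibly address the commutation of the unipotent parts $u_c$ and $u_d$; the abelianness of $\textbf{U}(\textbf{C})$ that your reduction exposes is exactly what would have to be supplied to make either argument complete.
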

\begin{proof}
As $\textbf{C}^\circ$ is nilpotent, the map $\psi \colon C_0 \to \textbf{U}(\textbf{C})$ given by $\psi(c) = u_c$ is a group morphism. Since $\textbf{U}(\textbf{C}) = \textbf{U}(\textbf{C}^\circ) = \textbf{U}(\overline{C_0})$, it follows that $U_{C_0}$ is Zariski dense in $\textbf{U}(\textbf{C})$, and hence also the larger group $U_{C,\textbf{S}}$.

Let $\pi \colon C \to \textbf{S}$ be the group morphism given by $\pi(c) = s_c$ and write $T = \pi(C) \leq \textbf{S}$ and $T_0 = \pi(C_0) \leq \textbf{S}$. A direct computation shows that for all $c,d \in C$, it holds that
\begin{equation}\label{finite_generator_piece_unipotent_shadow_formula}
	u_{cd} = u_c s_c u_d s_c^{-1}.
\end{equation} The group $T$ acts on $\textbf{U}(\textbf{C})$ by conjugation. Since $\textbf{S}^\circ$ is central in $\textbf{C}^\circ$, the action by conjugation factors through the finite group $T / T_0$, also implying the last statement. Equation (\ref{finite_generator_piece_unipotent_shadow_formula}) shows that the induced action of $T / T_0$ on $\textbf{U}(\textbf{C})$ preserves the subgroup $U_{C, \textbf{S}}$. Hence, $U_{C, \textbf{S}}$ is normalized by the group $C$. The finite index condition also follows from Equation (\ref{finite_generator_piece_unipotent_shadow_formula}).
\end{proof}

\begin{defn}\label{unipotent_shadow_defn}
Following the notation of \cite[Definition 7.3]{baues_grunewald}, we define
$$
\theta_{C, \textbf{S}} = \left<(\Fitt(\Gamma))^{\frac{1}{m}}, U_{C, \textbf{S}} \right> \quad \text{ and } \quad \theta_{C_0} = \left<(\Fitt(\Gamma))^{\frac{1}{m}}, U_{C_0} \right>.
$$
We call the groups $\theta_{C, \textbf{S}}$ \emph{unipotent shadows} of $\Gamma$.
\end{defn}

By Lemma \ref{finite_generator_piece_unipotent_shadow} we know that $\theta_{C, \textbf{S}}$ is a Zariski dense subgroup of $\textbf{U}$
containing $\theta_{C_0}$ as a normal subgroup of finite index. However, we sometimes need the following additional property:

\begin{defn}
We call $\theta_{C, \textbf{S}}$ a \emph{good unipotent shadow} if the following conditions hold:
$$
\theta_{C, \textbf{S}} \cap \textbf{F} = (\Fitt(\Gamma))^{\frac{1}{m}} = \Fitt(\tilde{\Gamma}).
$$
\end{defn}

We may find good unipotent shadows if we take another thickening of the group $\Gamma$.

\begin{prop}\label{thickening_unipotent_shadow_strong}
Let $\Gamma$ be a finitely generated FAR \emph{WTN}-group. Then there is a thickening $\tilde{\Gamma} = (\Fitt(\Gamma))^{\frac{1}{m}} \cdot \Gamma$ with a maximal virtually nilpotent supplement $C \leq \tilde{\Gamma}$ and a maximal $\mathbb{Q}$-defined $d$-subgroup $\textbf{S} \leq \overline{C}$ such that the subgroup $\theta_{C, \textbf{S}}$ is a good unipotent shadow.
\end{prop}
\begin{proof}
We starting by choosing a $k \in \mathbb{N}$ such that the thickening $(\Fitt(\Gamma))^{\frac{1}{k}} \cdot \Gamma$ admits a maximal virtually nilpotent supplement $C$ which is guaranteed by Proposition \ref{supplement_thickening}. Let $\theta_{C, \textbf{S}}$ be given as in Definition \ref{unipotent_shadow_defn}, and let $\textbf{S}$ be a maximal $\mathbb{Q}$-defined $d$-subgroup in the Zariski-closure $\textbf{C}$ of $C$.

The subgroup $\theta_{C, \textbf{S}}$  is Zariski dense in $\textbf{U}(\textbf{H}_\Gamma)$. Note that $(\Fitt(\Gamma))^{\frac{1}{k}}$ forms a normal subgroup of $\theta_{C, \textbf{S}}$. Moreover, the group $\theta_{C_0}/(\Fitt(\Gamma))^{\frac{1}{k}}$ and hence $\theta_{C, \textbf{S}}/(\Fitt(\Gamma))^{\frac{1}{k}}$ is finitely generated, as the image of $\Gamma \cap \textbf{H}^\circ_\Gamma$ under the projection map $\pi_u: \textbf{H}^\circ_\Gamma/\textbf{F} \to \textbf{U}(\textbf{H}_\Gamma) / \textbf{F}$. In particular, there exists a $m \in \mathbb{N}$ which is divisible by $k$ such that $\theta_{C, \textbf{S}} \cap \textbf{F} \leq (\Fitt(\Gamma))^{\frac{1}{m}}$, exactly as in the proof of Proposition \ref{supplement_thickening}. Now take $\tilde{\Gamma} = (\Fitt(\Gamma))^{\frac{1}{m}} \cdot \Gamma$, of which we will show that it satisfies the conditions of the proposition. 

Note that $C$ is a virtually nilpotent supplement in $\tilde{\Gamma}$ and hence Proposition \ref{supplement_thickening} implies that $C \subset C_1 = \textbf{C} \cap \tilde{\Gamma}$ with $C_1$ a maximal virtually nilpotent supplement. Every element $c \in C_1$ can be written as $c_1 = hc$ where $c \in C$ and $h \in (\Fitt(\Gamma))^{\frac{1}{m}}$. This implies that $\theta_{C_1,\textbf{S}} \cap \textbf{F} = (\theta_{C,\textbf{S}} \cap (\Fitt(\Gamma))^{\frac{1}{m}})(\Fitt(\Gamma))^{\frac{1}{m}}$. By construction of $m$ we know that this equals $(\Fitt(\Gamma))^{\frac{1}{m}}$, which exactly means that $\theta_{C_1,\textbf{S}}$ is a good unipotent shadow.
\end{proof}

We finish with the following compatibility results that shows the interaction between automorphisms of a thickening and the strong unipotent shadow.

\begin{prop}\label{prop7.5_baues_grunewald}
Let $\Gamma$ be a finitely generated virtually torsion-free FAR \emph{WTN}-group. Let $\tilde{\Gamma} = (\Fitt(\Gamma))^{\frac{1}{m}} \cdot \Gamma$ be a thickening of $\Gamma$ with a virtually nilpotent supplement $C \leq \tilde{\Gamma}$ for $\Fitt(\tilde{\Gamma})$ and corresponding strong unipotent shadow $\theta_{C, \textbf{S}}$. We then have the following:
\begin{enumerate}[(1)]
\item Let $\varphi \in \Aut(\tilde{\Gamma})$ satisfy $\varphi(C) = C$, and suppose that $\Psi$ is the extension of $\varphi$ to an automorphism of $\textbf{H}_\Gamma$. We then have $\Psi(\theta_{C_0}) = \theta_{C_0}$.

\item There exists a finite index subgroup of the group of all automorphisms $\varphi \in \Aut(\tilde{\Gamma})$ with $\varphi(C) = C$ such that the extension $\Psi$ to $\textbf{H}_\Gamma$ satisfies $\Psi(\theta_{C, \textbf{S}}) = \theta_{C, \textbf{S}}$. 

\item The group $\tilde{\Gamma}$ normalizes $\theta_{C, \textbf{S}}$.
\end{enumerate}
\end{prop}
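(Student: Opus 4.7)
My plan is to verify the three parts in order. For part (1), the main observation is that algebraic automorphisms commute with the Jordan decomposition. Since $\Fitt(\tilde{\Gamma})$ is characteristic in $\tilde{\Gamma}$, we have $\Psi(\Fitt(\tilde{\Gamma})) = \Fitt(\tilde{\Gamma})$. Also, as $\Psi$ agrees with $\varphi$ on $\tilde{\Gamma}$, we get $\Psi(\textbf{C}) = \Psi(\overline{C}) = \overline{\varphi(C)} = \textbf{C}$, hence $\Psi(\textbf{C}^\circ) = \textbf{C}^\circ$ and $\Psi(C_0) = C_0$. Because $\textbf{C}^\circ$ is a connected nilpotent algebraic group, the decomposition $\textbf{C}^\circ = \textbf{U}(\textbf{C}) \times \textbf{S}^\circ$ coincides with the Jordan decomposition, so the projection $\pi_u \colon \textbf{C}^\circ \to \textbf{U}(\textbf{C})$ onto the unipotent part is an algebraic morphism. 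Consequently, for any $c \in C_0$, we get $\Psi(u_c) = \Psi(\pi_u(c)) = \pi_u(\Psi(c)) = u_{\Psi(c)} \in U_{C_0}$. Thus $\Psi(U_{C_0}) \subseteq U_{C_0}$, with equality following by applying the same argument to $\varphi^{-1}$. Combined with $\Psi(\Fitt(\tilde{\Gamma})) = \Fitt(\tilde{\Gamma})$, this yields $\Psi(\theta_{C_0}) = \theta_{C_0}$.

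For part (2), which I expect to be the main technical step, set $A = \{\varphi \in \Aut(\tilde{\Gamma}) : \varphi(C) = C\}$ and let $n = [\theta_{C,\textbf{S}} : \theta_{C_0}]$, which is finite by Lemma \ref{finite_generator_piece_unipotent_shadow}. Consider the isolator
\[
\theta_{C_0}^{1/n} = \{u \in \textbf{U}(\textbf{C}) : u^n \in \theta_{C_0}\}.
\]
Since $\theta_{C_0}$ is Zariski dense in $\textbf{U}(\textbf{C})$, Lemma \ref{thickening_index} gives $[\theta_{C_0}^{1/n} : \theta_{C_0}] < \infty$. For every $\varphi \in A$, part (1) gives $\Psi(\theta_{C_0}) = \theta_{C_0}$, and since $u^n \in \theta_{C_0}$ if and only if $\Psi(u)^n = \Psi(u^n) \in \theta_{C_0}$, the extension $\Psi$ preserves $\theta_{C_0}^{1/n}$ as well. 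On the other hand, every element of $\theta_{C,\textbf{S}}$ has $n$-th power in $\theta_{C_0}$ by Lagrange, so $\theta_{C,\textbf{S}} \subseteq \theta_{C_0}^{1/n}$. Hence $\Psi(\theta_{C,\textbf{S}})$ is a subgroup of $\theta_{C_0}^{1/n}$ containing $\theta_{C_0}$, and there are only finitely many such subgroups because $\theta_{C_0}^{1/n}/\theta_{C_0}$ is finite. The action of $A$ by permutation on this finite set has a kernel of finite index, and this kernel is the desired subgroup; the obstacle here is that $\Psi$ need not preserve the chosen $d$-subgroup $\textbf{S}$ itself, which the isolator trick sidesteps.

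Part (3) is straightforward: since $\tilde{\Gamma} = \Fitt(\tilde{\Gamma}) \cdot C$, it suffices to show that both factors normalize $\theta_{C,\textbf{S}}$. The group $\Fitt(\tilde{\Gamma})$ is contained in $\theta_{C,\textbf{S}}$ by definition, and therefore normalizes it trivially. For $C$, Lemma \ref{finite_generator_piece_unipotent_shadow} says that $C$ normalizes $U_{C,\textbf{S}}$, while $C \leq \tilde{\Gamma}$ normalizes $\Fitt(\tilde{\Gamma})$ because the latter is normal in $\tilde{\Gamma}$. Hence $C$ normalizes $\theta_{C,\textbf{S}} = \langle \Fitt(\tilde{\Gamma}), U_{C,\textbf{S}}\rangle$, completing the proof.
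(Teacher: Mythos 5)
Your proof is correct and follows essentially the same route as the paper: part (1) via $\Psi(\textbf{C}^\circ)=\textbf{C}^\circ$ and compatibility with the unipotent projection, part (2) via trapping the shadow between a finite-index subgroup fixed by part (1) and a finite isolator extension of it so that the stabilizer has finite index, and part (3) verbatim. One small correction: the isolator $\theta_{C_0}^{1/n}$ must be taken inside $\textbf{U}(\textbf{H}_\Gamma)$, not $\textbf{U}(\textbf{C})$, since $\theta_{C_0}$ contains $(\Fitt(\Gamma))^{1/m}$ and is Zariski dense in the full unipotent radical; with that replacement the argument goes through unchanged (the paper instead applies the isolator trick to $U_{C_0}\leq U_{C,\textbf{S}}$, which is equivalent).
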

\begin{proof}
For part (1), we note that $\Psi(\overline{C}) = \overline{\Psi(C)}$. Since $\textbf{C} = \overline{C}$, we have $\Psi(\textbf{C}) = \textbf{C}$. Since algebraic automorphisms preserve the connected component of the identity, we have $\Psi(\textbf{C}^\circ) = \textbf{C}^\circ$. We may write $\textbf{C}^\circ = \textbf{U}(\textbf{C}) \cdot \textbf{S}^\circ$. Since $\Psi$ sends unipotents to unipotents, we have $\Psi(\textbf{U}(\textbf{C})) = \textbf{U}(\textbf{C})$. Hence, we must have that $U_{C_0}$ is preserved by $\Psi$. Since $\Psi$ also stabilizes $(\Fitt(\Gamma))^{\frac{1}{m}}$, we are done with (1).

For part (2), we let 
$$
K = \{\varphi \in \Aut(\tilde{\Gamma}) \: | \: \varphi(C) = C\},
$$
and note that $\theta_{C_0}$ is finite index in $\theta_{C, \textbf{S}}$ because $[U_{C, \textbf{S}} : U_{C_0}] < \infty$. Hence there exists a natural number $d$ such that $U_{C, \textbf{S}} \leq (U_{C_0})^{\frac{1}{d}}$, and moreover, Lemma \ref{thickening_index} implies that $[(U_{C_0})^{\frac{1}{d}} : U_{C_0}] < \infty$. Hence, $U_{C_0} \leq U_{C, \textbf{S}} \leq (U_{C_0})^{\frac{1}{d}}$. For every $\varphi \in K$, part (1) implies that $\varphi(U_{C_0}) = U_{C_0} $ and hence also $\varphi((U_{C_0})^{\frac{1}{d}}) = (U_{C_0})^{\frac{1}{d}}$. Thus, $K$ acts on the set of groups $H$ such that $U_{C_0} \leq H \leq (U_{C_0})^{\frac{1}{d}}$, which is a finite set. Hence, the stabilizer of $U_{C, \textbf{S}}$ will be a finite index subgroup of $K$. 

For part (3), we see that $U_{C, \textbf{S}}$ is normalized by $C$ by Lemma \ref{finite_generator_piece_unipotent_shadow}. Since we also have that $(\Fitt(\Gamma))^{\frac{1}{m}}$ is normalized by $C$, it then follows $C$ normalizes $\theta_{C, \textbf{S}} = (\Fitt(\Gamma))^{\frac{1}{m}} \cdot U_{C, \textbf{S}}$. Since $(\Fitt(\Gamma))^{\frac{1}{m}} \leq \theta_{C, \textbf{S}}$, we see that $(\Fitt(\Gamma))^{\frac{1}{m}}$ normalizes $\theta_{C, \textbf{S}}$. Because $\tilde{\Gamma} = (\Fitt(\Gamma))^{\frac{1}{m}} \cdot C$, it then follows that $\tilde{\Gamma}$ normalizes $\theta_{C, \textbf{S}}$.
\end{proof}

\section{$S$-arithmetic subgroups of $\Aut(\Gamma)$}\label{section_s_arithmetic_subgroups}

In this section, we construct certain $S$-arithmetic subgroups of $\Aut(\Gamma)$ by embedding it in the group $\Aut({\textbf{H}_\Gamma})$. We start by recalling how to realize the latter as an algebraic group.

\subsection{The algebraic group $\Aut(\textbf{H})$}
\label{closed_subgroups_Aut}

We first describe properties of the automorphism group of virtually solvable linear algebraic groups that have a strong unipotent radical. This section is a recollection of results in \cite{baues_grunewald}. From now on, let $\textbf{H}$ be a virtually solvable linear algebraic group defined over $\mathbb{Q}$. We define $\Aut(\textbf{H})$ as the group of algebraic automorphisms of $\textbf{H}$ defined over $\Q$. We will always assume that $\textbf{H}$ has a strong unipotent radical $\textbf{U} = \textbf{U}(\textbf{H})$, meaning that the centralizer of $\textbf{U}$ lies in $\textbf{U}$ itself.

Let $\mathfrak{u}$ be the Lie algebra of the unipotent radical $\textbf{U}$. We have that $\Aut(\mathfrak{u}) \leq \GL(\mathfrak{u})$ is a $\mathbb{Q}$-defined linear algebraic group. It then follows that the exponential map $\exp \colon \mathfrak{u} \to \textbf{U}$ is an algebraic isomorphism of varieties, where $\mathfrak{u}$ gets its structure as a $\mathbb{Q}$-defined vector space.  Via this isomorphism that sends $\Psi \in \Aut(\textbf{U})$ to $\exp^{-1} \circ \Psi \circ \exp \in \Aut(\mathfrak{u})$, we see that $\Aut(\textbf{U})$ has a natural $\mathbb{Q}$-defined linear algebraic group structure.

There exists a maximal $\mathbb{Q}$-defined closed d-group $\textbf{S} \leq \textbf{H}$ so that $\textbf{H} = \textbf{U} \cdot \textbf{S}$. Fix a choice of $\textbf{S}$ from now on, and define
$$
\Aut(\textbf{H})_{\textbf{S}} = \{\Psi \in \Aut(\textbf{H}) \: | \: \Psi(\textbf{S}) = \textbf{S}\}.
$$
By \cite[Lemma 3.2]{baues_grunewald} the restriction map $\Psi \to \Psi|_{\textbf{U}}$ identifies $\Aut(\textbf{H})_{\textbf{S}}$ with a $\mathbb{Q}$-closed subgroup of $\Aut(\textbf{U})$. Since $\Aut(\textbf{H})_{\textbf{S}}$ acts on $\textbf{U}$ by algebraic automorphisms, the semi-direct product $\textbf{U} \rtimes \Aut(\textbf{H})_{\textbf{S}}$ is a $\mathbb{Q}$-defined linear algebraic group. For any $u \in \textbf{U}$, we write $\Inn_u(h) = uhu^{-1}$ for all $h \in \textbf{H}$. Letting 
\begin{equation*}
	\Theta \colon  \textbf{U} \rtimes \Aut(\textbf{H})_{\textbf{S}} \to \Aut(\textbf{H})
\end{equation*}
be given by 
\begin{equation}\label{theta_map}
	\Theta(u, \Psi) = \Inn_u \circ \Psi,
\end{equation}
\cite[Lemma 3.3 and 3.4]{baues_grunewald} then imply that $\Theta$ is surjective with a $\mathbb{Q}$-defined unipotent subgroup of $\textbf{U} \rtimes \Aut(\textbf{H})_{\textbf{S}}$ as its kernel. We write
$$
\textbf{A}_{\textbf{H}} = (\textbf{U} \rtimes \Aut(\textbf{H})_{\textbf{S}}) / \ker \Theta
$$ for the natural quotient. In particular, \cite[Theorem 3.5]{baues_grunewald} implies there is an induced isomorphism of $\mathbb{Q}$-defined linear algebraic groups $\bar{\Theta} \colon \textbf{A}_{\textbf{H}} \to \Aut(\textbf{H})$. 

Now take $\textbf{F} \leq \textbf{U}$ a $\mathbb{Q}$-closed normal subgroup of $\textbf{H}$ such that $[\textbf{H}^\circ, \textbf{H}^\circ] \leq \textbf{F}$. We then see by \textbf{SG5} that $\textbf{H} \cong \textbf{F} \cdot \textbf{C}$ for every $\mathbb{Q}$-closed Cartan subgroup $\textbf{C}$ of $\textbf{H}$.  We let $N_{\Aut(\textbf{H})}(\textbf{F})$ to denote the subgroup of elements in $\Aut(\textbf{H})$ that preserve $\textbf{F}$. We also define
\begin{align*}
	\textbf{A}_{\textbf{H}| \textbf{F}}& = \left\{ \Psi \in N_{\Aut(\textbf{H})}(\textbf{F}) \mid \Psi|_{\textbf{H} / \textbf{F}} = \text{id}_{\textbf{H} / \textbf{F}} \right\},\\
	\textbf{A}_{\textbf{S}} &= \left\{ \Psi \in N_{\Aut(\textbf{H})}(\textbf{F}) \mid  \Psi|_{\textbf{H}^\circ / \textbf{F}} = \text{id}_{\textbf{H}^\circ / \textbf{F}}, \Psi(\textbf{S}) = \textbf{S} \right\},\\
	\textbf{A}_{\textbf{S}}^1 &= \left\{ \Psi \in \textbf{A}_{\textbf{S}} \mid  \Psi|_{\textbf{S}} = \text{id}_{\textbf{S}}\right\}.
\end{align*}
One can see that all of these groups are $\mathbb{Q}$-closed subgroups of $\Aut(\textbf{H})$.

For the following proposition, we introduce some notation. Let $A$ be any group containing $B$ as a subgroup. For $a \in A$, we write $\Inn_a \in \Aut(A)$ for the inner automorphism of $A$ given by $\Inn_a(g) = aga^{-1}$. Given an element $b \in B$, we set $\Inn_b^A \in \Aut(A)$ for the corresponding inner automorphism of $A$ which we to distinguish it from the induced inner automorphism  of $B$. We write $\Inn_B^{\textbf{A}}$ for the subgroup of $\Aut(A)$ consisting of all elements $\Inn_b^A$ for $b \in B$. With these definitions in mind, we finish with \cite[Proposition 3.13]{baues_grunewald} which is stated as follows.
\begin{prop}\label{linear_alg_structure_A_HF}
	Let $\textbf{H}$ be a $\mathbb{Q}$-defined solvable linear algebraic group with unipotent radical $\textbf{U}$. Let $\textbf{F} \leq \textbf{U}$ be a $\mathbb{Q}$-closed subgroup which contains $[\textbf{H}^\circ, \textbf{H}^\circ]$. Then
	$$
	\textbf{A}_{\textbf{H} | \textbf{F}} = \Inn_{\textbf{F}}^{\textbf{H}} \cdot \textbf{A}_{\textbf{S}}^1.
	$$
\end{prop}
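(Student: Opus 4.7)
The plan is to prove the two inclusions separately, where the containment $\supseteq$ is essentially immediate from the definitions and the hypothesis that $\textbf{F}$ is normal in $\textbf{H}$, and where the non-trivial direction $\subseteq$ uses the $[\textbf{H}^\circ,\textbf{H}^\circ]$-conjugacy of maximal $\mathbb{Q}$-closed $d$-subgroups (property \textbf{SG4}).

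For the inclusion $\Inn_{\textbf{F}}^{\textbf{H}} \cdot \textbf{A}_{\textbf{S}}^1 \subseteq \textbf{A}_{\textbf{H}|\textbf{F}}$, I would first observe that each $\Inn_{f}^{\textbf{H}}$ with $f \in \textbf{F}$ preserves $\textbf{F}$ (since $\textbf{F}$ is normal) and induces the identity on $\textbf{H}/\textbf{F}$ because $[\textbf{F},\textbf{H}] \subseteq \textbf{F}$. Next, for $\Psi \in \textbf{A}_{\textbf{S}}^1$, the hypothesis that $\Psi$ is the identity on $\textbf{H}^\circ/\textbf{F}$ in particular gives $\Psi|_{\textbf{U}/\textbf{F}} = \text{id}$, while $\Psi|_{\textbf{S}} = \text{id}$ together with the decomposition $\textbf{H} = \textbf{U} \cdot \textbf{S}$ then force $\Psi$ to be the identity on all of $\textbf{H}/\textbf{F}$.

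For the harder inclusion $\textbf{A}_{\textbf{H}|\textbf{F}} \subseteq \Inn_{\textbf{F}}^{\textbf{H}} \cdot \textbf{A}_{\textbf{S}}^1$, take $\Psi \in \textbf{A}_{\textbf{H}|\textbf{F}}$. Since $\Psi$ is an algebraic automorphism, $\Psi(\textbf{S})$ is again a maximal $\mathbb{Q}$-closed $d$-subgroup of $\textbf{H}$, so by \textbf{SG4} there exists $f \in [\textbf{H}^\circ,\textbf{H}^\circ] \subseteq \textbf{F}$ with $\Inn_f(\Psi(\textbf{S})) = \textbf{S}$. Put $\Psi' = \Inn_f^{\textbf{H}} \circ \Psi$; then $\Psi' \in \textbf{A}_{\textbf{H}|\textbf{F}}$ (as $\Inn_f^{\textbf{H}} \in \textbf{A}_{\textbf{H}|\textbf{F}}$ by the first paragraph) and $\Psi'(\textbf{S}) = \textbf{S}$.

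It remains to show that any $\Psi' \in \textbf{A}_{\textbf{H}|\textbf{F}}$ with $\Psi'(\textbf{S}) = \textbf{S}$ actually restricts to the identity on $\textbf{S}$, which is the one subtle point. I would argue as follows: since $\textbf{F} \subseteq \textbf{U}$, the hypothesis that $\Psi'$ acts trivially on $\textbf{H}/\textbf{F}$ implies it acts trivially on $\textbf{H}/\textbf{U}$. For $s \in \textbf{S}$, the element $\Psi'(s)s^{-1}$ lies in $\textbf{S}$ (since $\Psi'(\textbf{S}) = \textbf{S}$) and in $\textbf{U}$ (by triviality mod $\textbf{U}$), hence in $\textbf{U} \cap \textbf{S} = \{1\}$, giving $\Psi'|_{\textbf{S}} = \text{id}$. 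Combined with $\Psi' \in \textbf{A}_{\textbf{H}|\textbf{F}}$ this places $\Psi'$ in $\textbf{A}_{\textbf{S}}^1$, so $\Psi = \Inn_{f^{-1}}^{\textbf{H}} \circ \Psi'$ exhibits the desired decomposition. The main technical subtlety is the first step, namely invoking \textbf{SG4} precisely with conjugating element in $[\textbf{H}^\circ,\textbf{H}^\circ] \subseteq \textbf{F}$; this is exactly what makes the hypothesis $[\textbf{H}^\circ,\textbf{H}^\circ] \subseteq \textbf{F}$ enter the argument, and without it one could only conjugate into $\textbf{U}$ rather than into $\textbf{F}$.
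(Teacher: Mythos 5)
Your proof is correct. The paper does not actually prove this proposition itself---it is quoted verbatim as \cite[Proposition 3.13]{baues_grunewald}---and your argument (conjugate $\Psi(\textbf{S})$ back to $\textbf{S}$ by an element of $[\textbf{H}^\circ,\textbf{H}^\circ]\leq \textbf{F}$ via \textbf{SG4}, then use $\textbf{U}\cap\textbf{S}=\{1\}$ to see the corrected automorphism fixes $\textbf{S}$ pointwise) is precisely the standard proof from that source, with the easy reverse inclusion handled correctly via normality of $\textbf{F}$ and the decomposition $\textbf{H}=\textbf{U}\cdot\textbf{S}$.
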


\subsection{$S$-arithmetic subgroups of $\Aut(\textbf{H}_\Gamma)$}

Let $\Gamma$ be a finitely generated virtually torsion-free FAR \emph{WTN}-group with spectrum $S$ and notations as before.
By replacing $\Gamma$ by a thickening if necessary, we may assume that $\Gamma$ admits a maximal virtually nilpotent supplement $C$ to $\Fitt(\Gamma)$. 
 We choose a $\mathbb{Q}$-defined $d$-subgroup $\textbf{S} \leq \textbf{C}$ with an associated unipotent shadow $\theta$ as we constructed in the previous section. In particular, we may assume the given unipotent shadow is good. 

In this section, we demonstrate in the given setup that $A_{\Gamma \: | \: \Fitt(\Gamma)}$ is $S$-arithmetic. Therefore, let $\textbf{U} = \textbf{U}(\textbf{H}_\Gamma)$. The unipotent shadow $\theta \leq \textbf{U}$ gives us $S$-arithmetic subgroups of particular $\mathbb{Q}$-closed subgroups of $\Aut(\textbf{H}_\Gamma)$. Using these groups and notions, we will then demonstrate how $\Aut(\Gamma)$ sits in its Zariski closure in $\Aut(\textbf{H}_\Gamma)$. Towards this end, for a subgroup $B \leq \Aut(\textbf{H}_\Gamma)$, we define
$$
B[\theta] = \left\{ \Psi \in B \: |  \: \Psi(\theta) = \theta \right\},
$$
which is the stabilizer of $\theta$ in the group $B$.
This leads to the first construction of $S$-arithmetic subgroups of the automorphism group. 

\begin{prop}\label{prop8.2_baues_grunewald}
Let the data $(\Gamma, C, \textbf{S})$ be defined as above, and suppose that $\Fitt(\Gamma)$ is a $S$-arithmetic lattice in its Zariski closure in $\textbf{H}_\Gamma$. We have
$$
\Inn_{\Fitt(\Gamma)}^{\textbf{H}_\Gamma} \cdot \textbf{A}_{\textbf{S}}^1[\theta] \leq \textbf{A}_{\textbf{H}_\Gamma | \textbf{F}}
$$
is a $S$-arithmetic subgroup of $\textbf{A}_{\textbf{H}_\Gamma | \textbf{F}}$.
\end{prop}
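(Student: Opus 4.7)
The plan is to use the factorization $\textbf{A}_{\textbf{H}_\Gamma \,|\, \textbf{F}} = \Inn_{\textbf{F}}^{\textbf{H}_\Gamma} \cdot \textbf{A}_{\textbf{S}}^1$ provided by Proposition~\ref{linear_alg_structure_A_HF}, and to prove that each of the two factors of $\Inn_{\Fitt(\Gamma)}^{\textbf{H}_\Gamma} \cdot \textbf{A}_{\textbf{S}}^1[\theta]$ is $S$-arithmetic in its corresponding $\Q$-closed factor, then combine the two by a semidirect-product argument along the lines of property \textbf{AR4}.

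For the normal factor, the conjugation map $\textbf{F} \to \Inn_{\textbf{F}}^{\textbf{H}_\Gamma}$, $f \mapsto \Inn_f^{\textbf{H}_\Gamma}$, is a surjective $\Q$-defined morphism of $\Q$-defined linear algebraic groups with kernel $\textbf{F} \cap Z(\textbf{H}_\Gamma)$. Since by hypothesis $\Fitt(\Gamma)$ is an $S$-arithmetic lattice in $\textbf{F}$, property \textbf{AR1} immediately implies that the image $\Inn_{\Fitt(\Gamma)}^{\textbf{H}_\Gamma}$ is $S$-arithmetic in $\Inn_{\textbf{F}}^{\textbf{H}_\Gamma}$.

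For the other factor, I would realize $\textbf{A}_{\textbf{S}}^1$ as a $\Q$-closed subgroup of $\Aut(\textbf{U})$ via the restriction identification from Section~\ref{closed_subgroups_Aut}, where $\textbf{U} = \textbf{U}(\textbf{H}_\Gamma)$. The unipotent shadow $\theta$ is a finitely generated, Zariski-dense subgroup of $\textbf{U}$ of full Hirsch length $\dim_\Q(\textbf{U})$; since $(\Fitt(\Gamma))^{\frac{1}{m}}$ is $S$-arithmetic in $\textbf{F}$ and $\theta / (\Fitt(\Gamma))^{\frac{1}{m}}$ projects onto an $S$-arithmetic lattice in $\textbf{U}/\textbf{F}$, a Mal'cev basis argument via the exponential isomorphism $\mathfrak{u} \cong \textbf{U}$ shows that $\theta$ is commensurable with $\textbf{U}(\Z[\frac{1}{S}])$. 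Consequently, the stabilizer of $\theta$ in $\Aut(\textbf{U})$ is commensurable with $\Aut(\textbf{U})(\Z[\frac{1}{S}])$ and hence $S$-arithmetic in $\Aut(\textbf{U})$; intersecting with the $\Q$-closed subgroup $\textbf{A}_{\textbf{S}}^1$ yields that $\textbf{A}_{\textbf{S}}^1[\theta]$ is $S$-arithmetic in $\textbf{A}_{\textbf{S}}^1$.

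To combine the two pieces, I would observe that $\Inn_{\textbf{F}}^{\textbf{H}_\Gamma}$ is normal in $\textbf{A}_{\textbf{H}_\Gamma \,|\, \textbf{F}}$ via the identity $\Psi \circ \Inn_f^{\textbf{H}_\Gamma} \circ \Psi^{-1} = \Inn_{\Psi(f)}^{\textbf{H}_\Gamma}$, and that the goodness of $\theta$ (that is, $\theta \cap \textbf{F} = \Fitt(\Gamma)$) guarantees that every $\Psi \in \textbf{A}_{\textbf{S}}^1[\theta]$ preserves $\Fitt(\Gamma)$ and therefore normalizes $\Inn_{\Fitt(\Gamma)}^{\textbf{H}_\Gamma}$. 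The hardest point is then to apply \textbf{AR4} cleanly: the intersection $\Inn_{\textbf{F}}^{\textbf{H}_\Gamma} \cap \textbf{A}_{\textbf{S}}^1$ corresponds to $\Inn_f$ for $f$ centralizing $\textbf{S}$, so I expect the main obstacle to be controlling this intersection (either by showing it is finite up to commensurability, or by passing to a suitable finite-index subgroup where the decomposition is honestly semidirect). Once that is secured, \textbf{AR4} applied to the two $S$-arithmetic subgroups produced above concludes that $\Inn_{\Fitt(\Gamma)}^{\textbf{H}_\Gamma} \cdot \textbf{A}_{\textbf{S}}^1[\theta]$ is $S$-arithmetic in $\textbf{A}_{\textbf{H}_\Gamma \,|\, \textbf{F}}$.
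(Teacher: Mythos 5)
Your overall architecture (split into the normal factor $\Inn_{\Fitt(\Gamma)}^{\textbf{H}_\Gamma}$ and the factor $\textbf{A}_{\textbf{S}}^1[\theta]$, then recombine) matches the paper, and the treatment of the normal factor via \textbf{AR1} is fine. But the step establishing $S$-arithmeticity of $\textbf{A}_{\textbf{S}}^1[\theta]$ contains a genuine error: the unipotent shadow $\theta$ is \emph{not} commensurable with $\textbf{U}(\Z[\frac{1}{S}])$ unless $\Gamma$ is virtually nilpotent or $S=\emptyset$. Indeed $\theta\cap\textbf{F}=(\Fitt(\Gamma))^{\frac{1}{m}}$ is $S$-arithmetic in $\textbf{F}$, but $\theta/(\Fitt(\Gamma))^{\frac{1}{m}}$ is a \emph{finitely generated} Zariski-dense subgroup of the $\Q$-vector space $\textbf{U}/\textbf{F}$, hence free abelian of rank $\dim_\Q(\textbf{U}/\textbf{F})=h(\Gamma/\Fitt(\Gamma))$ and only commensurable with the $\Z$-points, not the $\Z[\frac{1}{S}]$-points, of $\textbf{U}/\textbf{F}$. (Already for $\Gamma=\mathrm{BS}(1,2)\times\Z$ one gets $\theta\sim\Z[\frac12]\times\Z$ inside $\textbf{U}\cong\Q^2$, whose stabilizer in $\GL(2,\Q)$ is upper triangular with $\pm1$ in the lower corner — far from commensurable with $\GL(2,\Z[\frac12])$.) So the stabilizer of $\theta$ in all of $\Aut(\textbf{U})$ is not $S$-arithmetic, and your "first stabilize in $\Aut(\textbf{U})$, then intersect with $\textbf{A}_{\textbf{S}}^1$" route collapses. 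The paper's proof avoids this by working inside $\textbf{A}_{\textbf{S}}^1$ from the start: elements of $\textbf{A}_{\textbf{S}}^1$ act as the identity on $\textbf{U}/\textbf{F}$, so the non-$S$-arithmetic "$\Z^b$ direction" of $\theta$ imposes essentially no condition, and stabilizing $\theta$ reduces to stabilizing $\theta\cap\textbf{F}=\Fitt(\Gamma)$ (goodness of $\theta$), which \emph{is} $S$-arithmetic in $\textbf{F}$; this yields $N_{\textbf{A}_{\textbf{S}}^1}(\log\theta)$ commensurable with $\textbf{A}_{\textbf{S}}^1(\Z[\frac{1}{S}])$.

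On the recombination step, you correctly identify the intersection $\Inn_{\textbf{F}}^{\textbf{H}_\Gamma}\cap\textbf{A}_{\textbf{S}}^1$ as an obstacle to applying \textbf{AR4} inside $\textbf{A}_{\textbf{H}_\Gamma|\textbf{F}}$, but you leave it unresolved. The paper's resolution is to apply \textbf{AR4} upstairs in the honest semidirect product $\textbf{F}\rtimes\textbf{A}_{\textbf{S}}^1$, where $\Fitt(\Gamma)\rtimes\textbf{A}_{\textbf{S}}^1[\theta]$ is $S$-arithmetic, and then push forward along the surjective algebraic morphism $\Theta\colon\textbf{F}\rtimes\textbf{A}_{\textbf{S}}^1\to\textbf{A}_{\textbf{H}_\Gamma|\textbf{F}}$ of Proposition \ref{linear_alg_structure_A_HF}, invoking \textbf{AR1}. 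You should adopt both fixes: derive $S$-arithmeticity of $\textbf{A}_{\textbf{S}}^1[\theta]$ intrinsically using the triviality of the $\textbf{A}_{\textbf{S}}^1$-action on $\textbf{U}/\textbf{F}$, and recombine via $\Theta$ rather than via an internal product decomposition.
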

\begin{proof}
We start by demonstrating that $\textbf{A}_{\textbf{S}}^1[\theta]$ is $S$-arithmetic in $\textbf{A}_{\textbf{S}}^1$. 
Given that $\textbf{A}_{\textbf{S}}^1$ acts on $\textbf{U} = \textbf{U}(\textbf{H}_\Gamma)$ by automorphisms, we get an induced action on the Lie algebra of $\textbf{U}$, denoted $\mathfrak{u}$, via $\exp^{-1} \circ \Psi \circ \exp$ where $\Psi \in \textbf{A}_{\textbf{S}}^1$. Embedding $\textbf{A}_{\textbf{S}}^1[\theta]$ as a subgroup of $\Aut(\Fitt(\Gamma))$, we have that $\textbf{A}_{\textbf{S}}^1[\theta]$ corresponds to
$$
N_{ \textbf{A}_{\textbf{S}}^1}(\log ( \theta)) = \{ \Psi \in \textbf{A}_{\textbf{S}}^1 \: | \: \Psi(\log(\theta)) = \log (\theta) \}.
$$
By our given definitions, $\textbf{A}_{\textbf{S}}^1[\theta]$ normalizes both $\textbf{F}$ and $\theta$. In particular, this group also normalizes $\textbf{F} \cap \theta$. Since $\theta$ is a good unipotent shadow, we have $\textbf{F} \cap \theta = \Fitt(\Gamma)$, which is $S$-arithmetic in $\textbf{F}$. Moreover, $\theta/ \Fitt(\Gamma)$ is finitely generated and Zariski-dense in the abelian group $\textbf{U} / \textbf{F}$, where $\textbf{A}_{\textbf{S}}^1$ acts as identity. It then follows that $N_{ \textbf{A}_{\textbf{S}}^1}(\log ( \theta))$ is $S$-arithmetic in $\textbf{A}_{\textbf{S}}^1$ as it is commensurable to $\textbf{A}^1_{\textbf{S}}\left(\Z[\frac{1}{S}]\right).$

Since $\Fitt(\Gamma)$ is $S$-arithmetic in $\textbf{F}$, we see that
$$
\Fitt(\Gamma) \rtimes \textbf{A}_{\textbf{S}}^1[\theta] \leq \textbf{F} \rtimes \textbf{A}_{\textbf{S}}^1
$$
is a $S$-arithmetic group. Let $\Theta \colon \textbf{F} \rtimes \textbf{A}_{\textbf{S}}^1 \to \textbf{A}_{\textbf{H}_\Gamma \: | \: \textbf{F}}$ be the algebraic morphism induced by Equation (\ref{theta_map}). Proposition \ref{linear_alg_structure_A_HF} then implies it is surjective. Since $\Inn_{\Fitt(\Gamma)}^{\textbf{H}_\Gamma} \cdot \textbf{A}_{\textbf{S}}^1[\theta]$ is the image of a $S$-arithmetic group under an algebraic morphism, it is $S$-arithmetic as desired.
\end{proof}

Let us now compare $\Aut(\Gamma)$ to the above $S$-arithmetic group. As in the introduction, we define
$$
A_{\Gamma  |  \Fitt(\Gamma)} = \left\{ \varphi \in \Aut(\Gamma)  \: | \:  \varphi_{\Gamma / \Fitt(\Gamma)} = \text{id}_{\Gamma / \Fitt(\Gamma)} \right\}.
$$
We then see that $A_{\Gamma  |  \Fitt(\Gamma)}$ is a characteristic subgroup of $\Aut(\Gamma)$. We also set
$$
A_{\Gamma  | \Fitt(\Gamma)}^{C} = \left\{ \varphi \in A_{\Gamma  | \Fitt(\Gamma)} \: | \: \varphi(C) = C\right\}.
$$

We now show that $\Inn_{\Fitt(\Gamma)}^\Gamma \cdot A_{\Gamma  | \Fitt(\Gamma)}^C$ has finite index in $A_{\Gamma  |  \Fitt(\Gamma)}$.
\begin{lemma}\label{inn_and_fix_c_finite_agf}
Let the data $(\Gamma, C, \textbf{S})$ be defined as above, then $\Inn_{\Fitt(\Gamma)}^{\Gamma} \cdot A_{\Gamma  | \Fitt(\Gamma)}^C$ has finite index in $A_{\Gamma  |  \Fitt(\Gamma)}$.
\end{lemma}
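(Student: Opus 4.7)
The plan is to use an orbit-stabilizer argument built on Proposition \ref{finitely_many_thickenings}. Since we are in the setup where $\Gamma$ has already been replaced by a thickening admitting a maximal virtually nilpotent supplement $C$ to $\Fitt(\Gamma)$, Proposition \ref{finitely_many_thickenings} applies directly to give that the set $\mathcal{S}$ of all maximal virtually nilpotent supplements for $\Fitt(\Gamma)$ in $\Gamma$ splits into only finitely many $\Fitt(\Gamma)$-conjugacy classes.

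Next, I would show that $A_{\Gamma|\Fitt(\Gamma)}$ acts naturally on the finite set $\mathcal{S}/{\sim}$ of $\Fitt(\Gamma)$-conjugacy classes. For $\varphi \in A_{\Gamma|\Fitt(\Gamma)}$ and $C' \in \mathcal{S}$, the image $\varphi(C')$ is again a maximal virtually nilpotent supplement: indeed $\varphi(\Fitt(\Gamma)) = \Fitt(\Gamma)$ by characteristicity, so $\varphi(C')\cdot\Fitt(\Gamma) = \varphi(C'\cdot\Fitt(\Gamma)) = \varphi(\Gamma) = \Gamma$, and maximality is automorphism-invariant. The action is well-defined on conjugacy classes since for $n \in \Fitt(\Gamma)$ the element $\varphi(n) \in \Fitt(\Gamma)$ conjugates $\varphi(C')$ to $\varphi(nC'n^{-1})$. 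This gives a homomorphism from $A_{\Gamma|\Fitt(\Gamma)}$ to a finite symmetric group, and in particular the stabilizer $\mathrm{Stab}([C])$ of the $\Fitt(\Gamma)$-conjugacy class of $C$ has finite index in $A_{\Gamma|\Fitt(\Gamma)}$.

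Finally, for any $\varphi \in \mathrm{Stab}([C])$, choose $n \in \Fitt(\Gamma)$ with $\varphi(C) = nCn^{-1}$, and set $\psi = \Inn_{n^{-1}}^{\Gamma} \circ \varphi$. Then $\psi(C) = C$, and since conjugation by $n \in \Fitt(\Gamma)$ acts trivially on $\Gamma/\Fitt(\Gamma)$, the automorphism $\psi$ still lies in $A_{\Gamma|\Fitt(\Gamma)}$, so $\psi \in A_{\Gamma|\Fitt(\Gamma)}^{C}$. Writing $\varphi = \Inn_{n}^{\Gamma}\circ \psi$ shows $\mathrm{Stab}([C]) \subseteq \Inn_{\Fitt(\Gamma)}^{\Gamma}\cdot A_{\Gamma|\Fitt(\Gamma)}^{C}$, and combining with the finite-index statement above concludes the proof.

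The argument is essentially formal once Proposition \ref{finitely_many_thickenings} is in hand, so I do not anticipate a serious obstacle. The main subtlety to verify carefully is that the action of $A_{\Gamma|\Fitt(\Gamma)}$ genuinely descends to an action on the finite set of $\Fitt(\Gamma)$-conjugacy classes (rather than the larger set $\mathcal{S}$ itself), which hinges precisely on $\Fitt(\Gamma)$ being characteristic.
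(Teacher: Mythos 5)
Your proof is correct and follows essentially the same route as the paper: both arguments invoke Proposition \ref{finitely_many_thickenings} to get finitely many $\Fitt(\Gamma)$-conjugacy classes of maximal virtually nilpotent supplements, let $A_{\Gamma|\Fitt(\Gamma)}$ act on this finite set, and then correct an element of the stabilizer of $[C]$ by an inner automorphism $\Inn_n$ with $n\in\Fitt(\Gamma)$ to land in $A_{\Gamma|\Fitt(\Gamma)}^{C}$. Your write-up is in fact more explicit than the paper's (which only considers the kernel of the action and asserts the containment without the conjugating-element computation), but the underlying argument is the same.
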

\begin{proof}
We see that $A_{\Gamma  |  \Fitt(\Gamma)}$ acts on the set of $\Fitt(\Gamma)$-conjugacy classes of virtually nilpotent supplements of $\Fitt(\Gamma)$. Proposition \ref{finitely_many_thickenings} implies that there are finitely many maximal $\Fitt(\Gamma)$-conjugacy classes of virtually nilpotent supplements $C$ for $\Fitt(\Gamma)$. In particular, the kernel of this action is a finite index subgroup of $A_{\Gamma  | \Fitt(\Gamma)}$, which moreover is contained in $\Inn_{\Fitt(\Gamma)}^\Gamma \cdot A^C_{\Gamma | \Fitt(\Gamma)}$. Hence, $\Inn_{\Fitt(\Gamma)}^\Gamma \cdot A_{\Gamma |  \Fitt(\Gamma)}$ is a finite index subgroup of $A_{\Gamma | \Fitt(\Gamma)}$.
\end{proof}

We now explore the relationship between the groups $A_{\Gamma  |  \Fitt(\Gamma)}^C[\theta]$ and $A_{\Gamma  | \Fitt(\Gamma)}$.

\begin{lemma}\label{lemma8.4_baues_grunewald}
Let the data $(\Gamma, C, \textbf{S})$ be described as above, then $\Inn_{\Fitt(\Gamma)}^\Gamma \cdot A_{\Gamma  |  \Fitt(\Gamma)}^C[\theta]$ has finite index in $A_{\Gamma  |  \Fitt(\Gamma)}$.
\end{lemma}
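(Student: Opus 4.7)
The plan is to stratify the argument using Lemma \ref{inn_and_fix_c_finite_agf} together with part (2) of Proposition \ref{prop7.5_baues_grunewald}. First, I would invoke Lemma \ref{inn_and_fix_c_finite_agf} to perform an initial reduction: since $\Inn_{\Fitt(\Gamma)}^\Gamma \cdot A_{\Gamma|\Fitt(\Gamma)}^C$ already sits as a finite index subgroup of $A_{\Gamma|\Fitt(\Gamma)}$, it suffices to show that
$$\Inn_{\Fitt(\Gamma)}^\Gamma \cdot A_{\Gamma|\Fitt(\Gamma)}^C[\theta] \leq \Inn_{\Fitt(\Gamma)}^\Gamma \cdot A_{\Gamma|\Fitt(\Gamma)}^C$$
has finite index.

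Next, I would extract the key input from part (2) of Proposition \ref{prop7.5_baues_grunewald}, which says that the automorphisms of (the thickened) $\Gamma$ preserving $C$ whose algebraic extension to $\textbf{H}_\Gamma$ also preserves $\theta$ form a finite index subgroup of the stabilizer of $C$ in $\Aut(\Gamma)$. Intersecting this finite index subgroup with $A_{\Gamma|\Fitt(\Gamma)}^C$ immediately yields $\left[A_{\Gamma|\Fitt(\Gamma)}^C : A_{\Gamma|\Fitt(\Gamma)}^C[\theta]\right] < \infty$.

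Finally, I would upgrade this finite index statement to one involving $\Inn_{\Fitt(\Gamma)}^\Gamma$ on the left. Since $\Fitt(\Gamma)$ is characteristic in $\Gamma$, the identity $\psi \circ \Inn_f^\Gamma \circ \psi^{-1} = \Inn_{\psi(f)}^\Gamma$ (valid for all $\psi \in \Aut(\Gamma)$ and $f \in \Fitt(\Gamma)$) shows that $\Inn_{\Fitt(\Gamma)}^\Gamma$ is normal in $\Aut(\Gamma)$, so in particular the products above are subgroups. A routine coset argument then finishes the proof: choosing a transversal $\psi_1, \dots, \psi_n$ for $A_{\Gamma|\Fitt(\Gamma)}^C[\theta]$ in $A_{\Gamma|\Fitt(\Gamma)}^C$, every element of $\Inn_{\Fitt(\Gamma)}^\Gamma \cdot A_{\Gamma|\Fitt(\Gamma)}^C$ can be written as $\iota \psi_i \eta$ with $\iota \in \Inn_{\Fitt(\Gamma)}^\Gamma$ and $\eta \in A_{\Gamma|\Fitt(\Gamma)}^C[\theta]$, bounding the index by $n$.

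The only nontrivial ingredient is the application of Proposition \ref{prop7.5_baues_grunewald}(2); the reduction via Lemma \ref{inn_and_fix_c_finite_agf} and the final coset bookkeeping are formal. I do not expect any genuine obstacles beyond verifying that the thickening hypothesis is already in force (which was arranged at the start of Section \ref{section_s_arithmetic_subgroups}), so that Proposition \ref{prop7.5_baues_grunewald} can be invoked directly with $\tilde{\Gamma} = \Gamma$.
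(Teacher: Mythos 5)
Your proposal is correct and follows exactly the paper's argument: the paper's proof of this lemma is literally ``immediate from Lemma \ref{inn_and_fix_c_finite_agf} and Proposition \ref{prop7.5_baues_grunewald},'' and you have simply spelled out the same two ingredients together with the routine coset bookkeeping. No further comment is needed.
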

\begin{proof}
This is immediate from Lemma \ref{inn_and_fix_c_finite_agf} and Proposition \ref{prop7.5_baues_grunewald}.
\end{proof}

We note that there exists a semidirect product decomposition of $\textbf{C}$ written as $\textbf{C} = \textbf{U}(\textbf{C}) \cdot \textbf{S}$ with associated projection morphism $\pi_{\textbf{S}} \colon \textbf{C} \to \textbf{S}$. We define
$$
T = \pi_{\textbf{S}}(C) \quad \text{ and } \quad T_0 = \pi_{\textbf{S}}(C \cap \textbf{C}^\circ).
$$
Recall that $\theta = \theta_{C, \textbf{S}} = \left< \Fitt(\Gamma), U_{C, \textbf{S}} \right>$ where $U_{C, \textbf{S}} = \left\{ u_c \: | \: c \in C\right\}$. Every $c \in C$ can be written uniquely as $c= u_c  \pi_\textbf{S}(c)$ and thus $c \: \pi_{\textbf{S}}(c)^{-1} = u_c \in \theta.$

We now consider the following observations.
\begin{lemma}\label{baums_grunewald8.5}
Let the data $(\Gamma, C, \textbf{S})$, $T$, and $T_0$ be described as above. We then have that:
\begin{enumerate}[(1)]
\item The group $T$ normalizes $\Fitt(\Gamma) \cap C$ and $T_0$ centralizes $\Fitt(\Gamma) \cap C$;
\item If $\Psi$ is the extension of the automorphism $\varphi \in A_{\Gamma | \Fitt(\Gamma)}^C[\theta]$ to an automorphism of $\textbf{H}_\Gamma$, then $\Psi(t) t^{-1} \in \Fitt(\Gamma) \cap C$ for each $t \in T$. Moreover,  if $t \in T_0$, then $\Psi(t) t^{-1} = 1$.
\end{enumerate} 
\end{lemma}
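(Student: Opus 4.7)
The plan is to first identify $N := \Fitt(\Gamma) \cap C$ with $\Gamma \cap \textbf{U}(\textbf{C})$, using $C = \Gamma \cap \textbf{C}$ (from Proposition~\ref{supplement_thickening}) together with the observation that $\textbf{F} \cap \textbf{C} \subseteq \textbf{U}(\textbf{C})$ (since an element of $\textbf{C}$ lying in the unipotent group $\textbf{F}$ must be unipotent in $\textbf{C}$). In particular $N \leq \textbf{U}(\textbf{C}) \leq \textbf{C}^\circ$. I will also use throughout that $\pi_\textbf{S}\colon \textbf{C} \to \textbf{S}$ is a group morphism with kernel $\textbf{U}(\textbf{C})$ (from $\textbf{C} = \textbf{U}(\textbf{C}) \rtimes \textbf{S}$), and that $\textbf{S}^\circ$ is central in the connected nilpotent group $\textbf{C}^\circ$.

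For part $(1)$, the centralization claim is essentially immediate: since $T_0 = \pi_\textbf{S}(C \cap \textbf{C}^\circ) \subseteq \textbf{S}^\circ$ and $N \leq \textbf{C}^\circ$, the centrality of $\textbf{S}^\circ$ in $\textbf{C}^\circ$ forces $T_0$ to centralize $N$. For general $t \in T$, I would choose $c \in C$ with $\pi_\textbf{S}(c) = t$, so that $c = u_c t$ with $u_c \in U_{C,\textbf{S}} \leq \theta$. For $n \in N$, the normality of $N$ in $C$ gives $cnc^{-1} \in N$, and a short rearrangement yields
$$tnt^{-1} = u_c^{-1}(cnc^{-1})u_c.$$
Both $u_c$ and $cnc^{-1} \in N \leq \Fitt(\Gamma) \leq \theta$ lie in $\theta$, so $tnt^{-1} \in \theta$; meanwhile $tnt^{-1}$ also lies in $\textbf{F}$ (by normality of $\textbf{F}$ in $\textbf{H}_\Gamma$) and in $\textbf{U}(\textbf{C})$ (by normality of $\textbf{U}(\textbf{C})$ in $\textbf{C}$). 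The good-shadow identity $\theta \cap \textbf{F} = \Fitt(\Gamma)$ then forces $tnt^{-1} \in \Fitt(\Gamma) \cap \textbf{U}(\textbf{C}) = N$.

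For part $(2)$, since $\varphi$ is the identity modulo $\Fitt(\Gamma)$ on the Zariski-dense subgroup $\Gamma$, the extension $\Psi$ is the identity modulo $\textbf{F}$ on $\textbf{H}_\Gamma$; moreover $\varphi(C) = C$ forces $\Psi(\textbf{C}) = \textbf{C}$. Hence $\Psi(t)t^{-1} \in \textbf{F} \cap \textbf{C} \subseteq \textbf{U}(\textbf{C})$, and applying $\pi_\textbf{S}$ to the congruence $\Psi(t) \equiv t \pmod{\textbf{F}}$ shows the $\textbf{S}$-component of $\Psi(t)$ equals $t$. Writing $\Psi(t) = u_0 t$, we have $u_0 := \Psi(t)t^{-1} \in \textbf{U}(\textbf{C}) \cap \textbf{F}$. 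To locate $u_0$ inside $N$, I would pick $c \in C$ with $\pi_\textbf{S}(c) = t$ and write $\varphi(c) = cn$ with $n \in N$. Equating $\Psi(c) = \Psi(u_c)\Psi(t) = \Psi(u_c) u_0 t$ with $\Psi(c) = cn = u_c(tnt^{-1})t$ yields
$$u_0 = \Psi(u_c)^{-1}\, u_c\, (tnt^{-1}).$$
Each factor lies in $\theta$: $\Psi(u_c) \in \theta$ by the assumption $\Psi(\theta) = \theta$, $u_c \in \theta$ by construction, and $tnt^{-1} \in N \leq \theta$ by part $(1)$. Hence $u_0 \in \theta \cap \textbf{F} = \Fitt(\Gamma)$, and combined with $u_0 \in \textbf{U}(\textbf{C})$ this gives $u_0 \in N$, as required.

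For the sharper conclusion $\Psi(t) = t$ when $t \in T_0$, I would choose $c \in C \cap \textbf{C}^\circ$ with $\pi_\textbf{S}(c) = t \in \textbf{S}^\circ$. Centrality of $t$ in $\textbf{C}^\circ$ makes $c = u_c t$ a Jordan decomposition, and since $T_0$ centralizes $N$ by part $(1)$, the rewriting $\Psi(c) = cn = u_c t n = (u_c n) t$ is again a Jordan decomposition, with $u_c n$ unipotent and commuting with the semisimple element $t$. Uniqueness of Jordan decomposition, combined with its preservation under the algebraic automorphism $\Psi$, forces $\Psi(t) = t$. The main obstacle throughout is the bookkeeping: one must check that $\Psi$ preserves each of $\textbf{U}(\textbf{H}_\Gamma)$, $\textbf{C}$, $\textbf{U}(\textbf{C})$ and $\theta$, and invoke the good-shadow identity $\theta \cap \textbf{F} = \Fitt(\Gamma)$ at precisely the right step so as to cut down from membership in $\theta$ to membership in $\Fitt(\Gamma)$.
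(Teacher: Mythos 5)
Your argument is correct and follows essentially the same route as the paper: write $c = u_c t$ with $u_c \in U_{C,\textbf{S}} \leq \theta$, conjugate into $\theta$, and cut down to $\Fitt(\Gamma)$ via the good-shadow identity $\theta \cap \textbf{F} = \Fitt(\Gamma)$ together with the relation $\varphi(c)c^{-1} \in \Fitt(\Gamma)\cap C$ and $\Psi(\theta)=\theta$. Your Jordan-decomposition phrasing of the final claim for $t\in T_0$ is just a repackaging of the paper's observation that $\Psi$ preserves semisimple parts and $\textbf{F}\cap\textbf{S}^\circ=\{1\}$, and your upfront identification $\Fitt(\Gamma)\cap C=\Gamma\cap\textbf{U}(\textbf{C})$ cleanly replaces the paper's appeal to maximality of $C$ at the end of part (1).
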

\begin{proof}
Let $t \in T$ and take $c \in C$ such that $\pi_{\textbf{S}}(c) = t$. Write $d= u_c = c \: \pi_{\textbf{S}}(c)^{-1} = c t^{-1}\in U_{C,\textbf{S}} \subset\theta$. Since $\Fitt(\Gamma) \cap C$ is normal in $C$, the element $c$ normalizes $\Fitt(\Gamma) \cap C$. Hence,
$$
t(\Fitt(\Gamma) \cap C) t^{-1} = t c^{-1} ( \Fitt(\Gamma) \cap C)c t^{-1} = d^{-1} (\Fitt(\Gamma) \cap C) d.
$$
Since $d \in \theta$, we must have $d^{-1} (\Fitt(\Gamma) \cap C) d \leq \theta$. We also have that $d^{-1} (\Fitt(\Gamma) \cap C) d \leq \textbf{F}$ where $\textbf{F}$ is the Zariski closure of $\Fitt(\Gamma)$ in $\textbf{H}_\Gamma$. Hence, $d^{-1} (\Fitt(\Gamma) \cap C) d \leq \theta \cap \textbf{F} = \Fitt(\Gamma)$. Given that $d^{-1}(\Fitt(\Gamma) \cap C) d \leq \textbf{C}$, we then have $d^{-1}(\Fitt(\Gamma) \cap C) d \leq D = \Fitt(\Gamma) \cap \textbf{C}$. We see that $D$ is virtually nilpotent and is normalized by $C$. Hence, $\left<C, D \right> \leq \Gamma$ is a virtually nilpotent supplement of $\Fitt(\Gamma)$ in $\Gamma$. Since $C$ was chosen to be maximal, we must have $D = \Fitt(\Gamma) \cap \textbf{C} \leq C$ which gives the first part. For the second part, we have that $\Fitt(\Gamma) \cap C \leq \textbf{C}^\circ$ which is abelian, hence we are done with part (1).

Let $\varphi$ and $\Psi$ be as in part (2). Taking notations as in the first part, we have
\begin{equation}\label{eqn_8.5}
\Psi(d)^{-1} \varphi(c)c^{-1} d = \Psi(t) t^{-1}.
\end{equation}
By the assumption on $\varphi$ we know that $\varphi(c)c^{-1} \in \Fitt(\Gamma)$. Since $\varphi(c) c^{-1} \in C$ as well, we have $\varphi(c)c^{-1} \in \Fitt(\Gamma) \cap C \leq \theta$. Since $\varphi \in A_{\Gamma | \Fitt(\Gamma)}^C[\theta]$, we have by Proposition \ref{prop7.5_baues_grunewald}  that $\Psi(d) \in \theta$ for $d \in \theta$. We note that $\Psi = \text{id}_{\textbf{F}} \text{ mod } \textbf{F}$. Hence, $\Psi(t)t^{-1} \in \textbf{F}$. Therefore, $\Psi(t)t^{-1} \in \theta \cap \textbf{F}$. Since $\theta$ is a good unipotent shadow, we have $\Psi(t)t^{-1} \in \Fitt(\Gamma)$. As clearly also $\Psi(t) t^{-1} \in \textbf{C}$, we finish by noting that $\Fitt(\Gamma) \cap \textbf{C}$ is contained in $C$. For the second part, we note that if $s \in \textbf{S}^\circ$, then $\Psi(s)s^{-1} \in \textbf{F} \cap \textbf{S}^\circ = \{1\}$.
\end{proof}

We need the following lemma which shows that $H^1(T,\Gamma)$ is finite where $T$ is a finite group and $\Gamma$ is a torsion-free, polyrational nilpotent group on which $T$ acts by automorphisms.
\begin{lemma}\label{lemma_6.1_BG}
Let $T$ be a finite group and $\Gamma$ be a torsion-free poly-(cyclic or quasicyclic) nilpotent group on which $T$ acts by automorphisms. Then $H^1(T,\Gamma)$ is finite.
\end{lemma}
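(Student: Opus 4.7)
The plan is to induct on the nilpotency class $c$ of $\Gamma$, with the abelian case as base step and the standard non-abelian cohomology long exact sequence powering the inductive step.

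For the base case $c = 1$, we have $\Gamma = A$ a torsion-free abelian FAR-group, hence of finite Pr\"ufer rank $r = h(A)$. Two facts about $H^1(T, A)$ will imply its finiteness. First, the group of $1$-cocycles $Z^1(T, A)$ sits inside the abelian group of all functions $T \to A$, which is isomorphic to $A^{|T|}$ and hence has Pr\"ufer rank $|T| \cdot r$; the subgroup $Z^1(T, A)$ and its quotient $H^1(T, A) = Z^1(T, A) / B^1(T, A)$ therefore have finite Pr\"ufer rank. Second, the standard corestriction--restriction argument shows $|T| \cdot H^1(T, A) = 0$. An abelian group of bounded exponent and finite Pr\"ufer rank decomposes as a direct sum of finitely many bounded $p$-primary components, each of bounded $p$-rank, and is therefore finite.

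For the inductive step, take $\Gamma$ nilpotent of class $c \geq 2$ and set $Z = Z(\Gamma)$. Since $T$ acts by automorphisms, $Z$ is $T$-invariant. Because centralizers in torsion-free nilpotent groups are isolated, $Z$ is an isolated subgroup; in particular $\Gamma / Z$ is again torsion-free nilpotent, of class $c-1$, and lies in the same class of FAR-groups. The exact sequence of pointed sets in non-abelian cohomology gives
\[
H^1(T, Z) \longrightarrow H^1(T, \Gamma) \longrightarrow H^1(T, \Gamma / Z).
\]
Because $Z$ is central in $\Gamma$, any cocycle $\tilde{\xi} \colon T \to \Gamma$ lifting a cocycle $\xi$ for $\Gamma / Z$ acts trivially on $Z$ by conjugation, so the twisted $T$-action on $Z$ coincides with the original. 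Consequently, every non-empty fiber of the map $H^1(T, \Gamma) \to H^1(T, \Gamma / Z)$ is a principal homogeneous space under $H^1(T, Z)$. Combining the base case applied to $Z$ with the induction hypothesis applied to $\Gamma/Z$ yields $|H^1(T, \Gamma)| \leq |H^1(T, Z)| \cdot |H^1(T, \Gamma / Z)| < \infty$.

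The main obstacle is the base case, specifically showing that $H^1(T, A)$ has finite Pr\"ufer rank for $A$ a torsion-free abelian FAR-group, as opposed to the easier finitely generated case already treated in Baues--Gr\"unewald. The elementary observation that $Z^1(T, A)$ embeds into $A^{|T|}$ is what replaces finite generation here; combined with the exponent bound $|T| \cdot H^1(T, A) = 0$, it suffices to deduce finiteness.
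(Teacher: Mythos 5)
Your proof is correct, but it follows a genuinely different route from the paper's. The paper argues via the split extension $\Delta = \Gamma \rtimes T$: a cocycle $\zeta$ corresponds to the finite subgroup $T_\zeta = \{(\zeta(t),t)\}$, cohomologous cocycles correspond to $\Gamma$-conjugate subgroups, and finiteness of $H^1(T,\Gamma)$ is then read off from the fact (from Lennox--Robinson, 10.1.17) that the maximal torsion subgroups of a group with a finite series whose factors are infinite cyclic or locally finite fall into finitely many conjugacy classes; the only work is checking that the $T_\zeta$ are maximal torsion subgroups, which uses torsion-freeness of $\Gamma$. You instead run a cohomological d\'evissage: induction on the nilpotency class through the (isolated, $T$-invariant) center, with the non-abelian cohomology exact sequence and the observation that centrality kills the twisting on $Z$, reducing everything to the abelian base case, which you settle by combining the exponent bound $|T|\cdot H^1(T,A)=0$ with the finite Pr\"ufer rank of $Z^1(T,A) \leq A^{|T|}$. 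Both arguments are sound. The paper's approach is shorter because it outsources the hard content to a structural conjugacy theorem for minimax groups; yours is more self-contained and makes explicit exactly which feature of FAR-groups replaces finite generation in the classical Baues--Gr\"unewald setting (namely, finite Pr\"ufer rank of the cocycle group). One small imprecision: the fibers of $H^1(T,\Gamma)\to H^1(T,\Gamma/Z)$ are transitive $H^1(T,Z)$-sets rather than principal homogeneous spaces, since stabilizers need not be trivial; the cardinality bound $|H^1(T,\Gamma)| \leq |H^1(T,Z)|\cdot|H^1(T,\Gamma/Z)|$ you need is unaffected.
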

\begin{proof}
Let $\Delta = \Gamma \rtimes T$ be the split extension which corresponds to the given action of $T$ on $\Delta$ where we consider $\Gamma$ as a normal subgroup of $\Delta$. We see that a $1$-cocycle $\zeta \colon T \to \Gamma$ allows us to define a finite subgroup $T_\zeta = \{(\zeta(t),t) \: | \: t \in T\}$. We then see that two $1$-cocycles $\zeta_1$ and $\zeta_2$ are cohomologous if and only if $T_{\zeta_1}$ and $T_{\zeta_2}$ are conjugate by an element of $\Gamma.$ By assumption, the group $\Gamma$ admits a series of finite length whose factors are either infinite cyclic or locally finite. In particular, we see that $\Delta$ admits a series of finite length whose factors are either infinite cyclic or locally finite, and thus \cite[10.1.17]{lennox_robinson} shows that the maximal torsion subgroup of $\Delta$ fall into finitely many conjugacy classes. We claim that groups of the form $T_{\zeta}$ are maximal in $\Delta$, and towards this end, suppose that $T^\prime$ is a subgroup of $\Delta$ containing $T_\zeta$. Since $\Gamma$ is torsion-free, we see that if $\pi \colon \Delta \to T$ is the natural projection, then $\ker \pi \cap T^\prime = \{1\}$. Hence, $\pi|_{T^\prime}$ is an isomorphism onto its image. Since $|T_\zeta| \leq |T^\prime| \leq |T_\zeta|$ and $T_\zeta \leq T$, we have $T^\prime \cong T_\zeta$. Hence, $T_\zeta$ is maximal. We then see that there are at most finitely many $\Delta$-conjugacy classes of $T_\zeta$ for any $1$-cocycle $\zeta$. Since $\Gamma$ has finite index in $\Delta,$ there are at most finitely many $\Gamma$-conjugacy classes of groups of the form $T_\zeta$ giving our result.
\end{proof}

We let
$$
A_{\Gamma  | \Fitt(\Gamma)}^C[\theta]^1 = \left\{ \varphi \in A_{\Gamma  |  \Fitt(\Gamma)}^C[\theta] \: | \: \Psi(\textbf{S}) = \textbf{S}, \Psi|_{\textbf{S}} = \text{id}_{\textbf{S}}  \right\}.
$$
In the above definition, $\Psi$ is as always the extension of the automorphism $\varphi \in \Aut(\Gamma)$ to an automorphism of $\textbf{H}_\Gamma$.

\begin{lemma}\label{lemma_8.5_baues_grunewald}
Let the data $(\Gamma, C, \textbf{S})$ be described as above, then $\Inn^\Gamma_{\Fitt(\Gamma) \cap C} \cdot A_{\Gamma  |  \Fitt(\Gamma)}^C[\theta]^1$ is finite index in $A_{\Gamma  |  \Fitt(\Gamma)}^C[\theta]$.
\end{lemma}
\begin{proof}
It is clear that $\Inn_{\Fitt(\Gamma) \cap C}$ is contained in $A_{\Gamma  | \Fitt(\Gamma)}^C[\theta]$. 
The finite group $T / T_0$ acts on $\Fitt(\Gamma) \cap C$ by conjugation by Lemma \ref{baums_grunewald8.5}. The group $\Fitt(\Gamma) \cap C$ is a torsion-free nilpotent group that is poly-(cyclic or quasi-cyclic). We have by Lemma \ref{lemma_6.1_BG}  that $H^1(T/T_0, \Fitt(\Gamma) \cap C)$ is finite.

Let $\varphi \in A_{\Gamma \: |\: \Fitt(\Gamma)}^C[\theta]$ with extension given by $\Psi$. Lemma \ref{baums_grunewald8.5} implies that there is a map $D_\varphi \colon T \to \Fitt(\Gamma) \cap C$ given by setting $D_\varphi(s) = \Psi(s) s^{-1}$. We define a map $D \colon A_{\Gamma  |  \Fitt(\Gamma)}^C[\theta] \to Z^1(T / T_0, \Fitt(\Gamma) \cap C)$ by $D(\varphi) = D_\varphi$, of which we will show below that it is indeed well-defined.
We claim that $D$ satisfies the following $3$ properties:
\begin{enumerate}
\item[(1)] The map $D$ is indeed a well-defined map;
\item[(2)] the map $D$ induces a map
$$
\tilde{D} \colon    A_{\Gamma  |  \Fitt(\Gamma)}^C[\theta] / \Inn_{\Fitt(\Gamma) \cap C}^\Gamma \cdot A_{\Gamma |  \Fitt(\Gamma)}^C[\theta]^1  \to H^1(T / T_0, \Fitt(\Gamma) \cap C);
$$
\item[(3)] the map $\tilde{D}$ is injective.
\end{enumerate}

We first demonstrate (1) of the above list by demonstrating that $D_\varphi$ is a $1$-cocycle for all $\varphi \in  A_{\Gamma \: |\: \Fitt(\Gamma)}^C[\theta]$. Let $s_1, s_2 \in T/ T_0$, and consider $D_\varphi(s_1 s_2)$. We write
\begin{eqnarray*}
D_{\varphi}(s_1 s_2) &=& \Psi(s_1 s_2) s_2^{-1} s_1^{-1}\\
&=& \Psi(s_1) \Psi(s_2) s_2^{-1} s_1^{-1}\\
&=& \Psi(s_1)s_1^{-1} s_1 \Psi(s_2) s_2^{-1} s_1^{-1}\\
&=& D_{\varphi}(s_1) (D_{\varphi}(s_2))^{s_1}
\end{eqnarray*}
as desired.

For property (2), suppose that $\varphi_1 =\zeta \circ \varphi_2$ where 
$$
\varphi_1, \varphi_2 \in A_{\Gamma  |  \Fitt(\Gamma)}^C[\theta] \quad \text{ and } \quad \zeta \in \Inn_{\Fitt(\Gamma) \cap C}^\Gamma \cdot A_{\Gamma  |  \Fitt(\Gamma)}^C[\theta]^1.
$$ 
Let $\Psi_i$ be the extension of $\varphi_i$ for $i = 1,2$ and $\chi$ be the extension of $\zeta$ to $\textbf{H}_\Gamma,$ respectively. We may write $\chi = \Inn(g) \circ \rho$ where $g \in \Fitt(\Gamma) \cap C$ and $\rho$ is the extension to $\textbf{H}_\Gamma$ of an element of $A_{\Gamma  |  \Fitt(\Gamma)}^C[\theta]^1$. We note that
\begin{eqnarray*}
D_{\varphi_1}(s) &=& \Psi_1(s)s^{-1}\\
&=& \chi \circ \Psi_2(s) s^{-1} \\
&=& g (\rho \circ \Psi_2(s)) g^{-1} s^{-1}\\
&=& g (\rho \circ \Psi_2(s)) s^{-1} sg^{-1} s^{-1}.
\end{eqnarray*}
Since $\rho$ extends to an element of $A_{\Gamma  | \Fitt(\Gamma)}^C[\theta]^1$ and $\Psi_2(s) \in T \subset \textbf{S}$, we have $\rho \circ \Psi_2(s) = \Psi_2(s)$. Therefore,
\begin{eqnarray*}
D_{\varphi_1}(s) &=& g (\rho \circ \Psi_2(s)) s^{-1} sg^{-1} s^{-1}\\
&=&g \Psi_2(s) s^{-1} sg^{-1} s^{-1}\\
&=& g D_{\varphi_2}(s) (g^{-1})^{s}.
\end{eqnarray*}
Hence, $D_{\varphi_1}$ and $D_{\varphi_2}$ are cohomologous. Therefore, $D$ induces a map
$$
\tilde{D} \colon A_{\Gamma  |  \Fitt(\Gamma) \cap C}[\theta] / \Inn_{\Fitt(\Gamma) \cap C}^\Gamma \cdot A_{\Gamma \: | \: \Fitt(\Gamma)}^C[\theta]^1 \to H^1(T / T_0, \Fitt(\Gamma) \cap C).
$$ 

To finish, we now need to show that $\tilde{D}$ is injective. To proceed, we suppose that $D_{\varphi_1}$ and $D_{\varphi_2}$ are cohomologous. That implies there exists an element $v \in \Fitt(G) \cap C$ such that $D_{\varphi_1}(s) = v^{-1} D_{\varphi_2}(s) v^s.$ We write
\begin{eqnarray*}
D_{\varphi_1}(s) &=& v^{-1} D_{\varphi_2}(s) v^s\\
&=& v^{-1} \Psi_2(s)s^{-1}svs^{-1}\\
&=& v^{-1} \Psi_2(s) vs^{-1}\\
&=& D_{\Inn(v^{-1}) \circ \varphi_2}.
\end{eqnarray*}
Hence, $D_{\varphi_1} = D_{\Inn(v^{-1}) \circ \varphi_2}$. Therefore, 
$$
\varphi_1 \equiv \varphi_2 \text{ mod } \Inn_{\Fitt(\Gamma) \cap C}^\Gamma \cdot A_{\Gamma  | \Fitt(\Gamma)}^C[\theta]^1,
$$
and subsequently, $\tilde{D}$ is injective.

Since $\tilde{D}$ is an injective map into a finite set, the subgroup $\Inn_{\Fitt(\Gamma) \cap C}^G \cdot A_{\Gamma  | \Fitt(\Gamma)}^C[\theta]^1$ is finite index in $A_{\Gamma  | \Fitt(\Gamma)}^C$.
\end{proof}

By putting together Lemma \ref{inn_and_fix_c_finite_agf}, Lemma \ref{lemma8.4_baues_grunewald}, and Lemma \ref{lemma_8.5_baues_grunewald}, we obtain the following lemma.
\begin{lemma}\label{lemma8.7_baues_grunewald}
Let the data $(\Gamma, C, \textbf{S})$ be described as as above, then $\Inn_{\Fitt(\Gamma)}^\Gamma \cdot A_{\Gamma \: | \: \Fitt(\Gamma)}^C[\theta]^1$ is a finite index subgroup of $A_{\Gamma  | \Fitt(\Gamma)}$.
\end{lemma}

The link between the group $\Inn_{\Fitt(\Gamma)}^\Gamma \cdot A_{\Gamma  |  \Fitt(\Gamma)}^C[\theta]^1$ and $\Inn_{\Fitt(\Gamma)}^{\textbf{H}_\Gamma} \cdot \textbf{A}_{\textbf{S}}^1[\theta]$ is given by the following proposition.
\begin{prop}\label{lemma8.8_baues_grunewald}
Let the data $(\Gamma, C, \textbf{S})$ be described as as above, then we have $A_{\Gamma | \Fitt(\Gamma)}^C[\theta]^1 = \textbf{A}_{\textbf{S}}^1[\theta]$.
\end{prop}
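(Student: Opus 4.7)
The plan is to prove this set equality by showing both inclusions, where the more delicate direction will be $\textbf{A}_{\textbf{S}}^1[\theta] \subset A_{\Gamma | \Fitt(\Gamma)}^C[\theta]^1$, since this requires showing that any $\Psi \in \textbf{A}_{\textbf{S}}^1[\theta]$ actually restricts to an automorphism of $\Gamma$. The easy direction $A_{\Gamma | \Fitt(\Gamma)}^C[\theta]^1 \subset \textbf{A}_{\textbf{S}}^1[\theta]$ proceeds by unpacking definitions: if $\varphi$ lies in the left set with extension $\Psi$, then $\Psi$ normalizes $\textbf{F}$ (it preserves $\Fitt(\Gamma)$ which is Zariski dense in $\textbf{F}$), and $\Psi$ acts as identity on $\textbf{H}_\Gamma / \textbf{F}$ by Zariski density of the congruence $\Psi(g) \equiv g \pmod{\Fitt(\Gamma)}$ for $g \in \Gamma$; the conditions on $\textbf{S}$ and $\theta$ are built into the definition of the superscript $1$ and the bracket $[\theta]$.

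For the reverse inclusion, let $\Psi \in \textbf{A}_{\textbf{S}}^1[\theta]$. The first key step is to show $\Psi(\Fitt(\Gamma)) = \Fitt(\Gamma)$, which follows immediately from $\Psi(\theta) = \theta$, $\Psi(\textbf{F}) = \textbf{F}$, and the good shadow property $\theta \cap \textbf{F} = \Fitt(\Gamma)$. The second and main step is to show $\Psi(C) \subset \Gamma$. Take $c \in C$ and decompose $c = u_c s_c$ with $u_c \in \textbf{U}(\textbf{C}) \subset \theta$ and $s_c \in \textbf{S}$. Since $\Psi|_{\textbf{S}} = \text{id}$, we compute
\[
\Psi(c)\, c^{-1} = \Psi(u_c)\, s_c\, s_c^{-1}\, u_c^{-1} = \Psi(u_c)\, u_c^{-1}.
\]
Now $\Psi(u_c) \in \Psi(\theta) = \theta$ and also $\Psi(u_c) \equiv u_c \pmod{\textbf{F}}$ (since $\Psi$ is identity on $\textbf{H}^\circ/\textbf{F}$ and $u_c \in \textbf{U} \subset \textbf{H}^\circ$), so $\Psi(u_c) u_c^{-1} \in \theta \cap \textbf{F} = \Fitt(\Gamma)$. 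Hence $\Psi(c) \in \Fitt(\Gamma) \cdot C = \Gamma$. Combined with the first step and $\Gamma = \Fitt(\Gamma) \cdot C$, this gives $\Psi(\Gamma) \subset \Gamma$; applying the same argument to $\Psi^{-1}$ yields equality, so $\varphi := \Psi|_\Gamma \in \Aut(\Gamma)$.

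The final step is to verify $\varphi$ satisfies all the conditions to lie in $A_{\Gamma | \Fitt(\Gamma)}^C[\theta]^1$. For $\varphi(C) = C$: since $\textbf{C} = N_{\textbf{H}_\Gamma}(\textbf{S}^\circ)$ and $\Psi(\textbf{S}^\circ) = \textbf{S}^\circ$, we get $\Psi(\textbf{C}) = \textbf{C}$, and then $\varphi(C) = \Psi(\Gamma \cap \textbf{C}) = \Gamma \cap \textbf{C} = C$. For $\varphi \in A_{\Gamma | \Fitt(\Gamma)}$: using $\textbf{H}_\Gamma = \textbf{U} \cdot \textbf{S}$ from \textbf{SG3}, every element of $\Gamma$ decomposes as $us$ with $\Psi(us) = \Psi(u) s \equiv us \pmod{\textbf{F}}$, and intersecting with $\Gamma$ gives $\varphi(g) g^{-1} \in \Gamma \cap \textbf{F} = \Fitt(\Gamma)$. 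The conditions $\Psi(\textbf{S}) = \textbf{S}$, $\Psi|_{\textbf{S}} = \text{id}$, and $\Psi(\theta) = \theta$ are given.

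The main obstacle is Step 2 above, and it is the point at which the goodness of the unipotent shadow is used essentially; without the equality $\theta \cap \textbf{F} = \Fitt(\Gamma)$ one only obtains $\Psi(c) c^{-1} \in \theta$, which need not lie in $\Gamma$. This is also the reason that earlier in the section the thickening was chosen precisely so that a good unipotent shadow exists (Proposition \ref{thickening_unipotent_shadow_strong}).
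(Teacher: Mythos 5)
Your proof is correct and follows essentially the same route as the paper's: both directions are handled by the identical key computation $\Psi(c)c^{-1}=\Psi(u_c)u_c^{-1}\in\theta\cap\textbf{F}=\Fitt(\Gamma)$, with the goodness of the shadow used at exactly the same point, and your extra care about surjectivity (applying the argument to $\Psi^{-1}$) and the final membership checks only makes explicit what the paper leaves terse. One small imprecision: the inclusion $\textbf{U}(\textbf{C})\subset\theta$ you invoke is false for the full algebraic group $\textbf{U}(\textbf{C})$; what you actually need, and what holds by definition, is that $u_c\in U_{C,\textbf{S}}\leq\theta$ for $c\in C$.
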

\begin{proof}
By definition of $\textbf{A}_{\textbf{S}}^1$ 
we have $A_{\Gamma  |  \Fitt(\Gamma)}^C[\theta]^1 \leq \textbf{A}_{\textbf{S}}^1[\theta]$. For the other direction, let $\Psi \in \textbf{A}_{\textbf{S}}^1[\theta]$. Since $\theta$ is a good unipotent shadow, we have $\theta \cap \textbf{F} = \Fitt(\Gamma)$. We note that 
$$
\Psi(\Fitt(\Gamma)) = \Psi(\textbf{F} \cap \theta) = \Psi(\textbf{F}) \cap \Psi(\theta) = \textbf{F} \cap \theta = \Fitt(\Gamma).
$$
Given that $\Psi_{\textbf{H}^\circ/ \textbf{F}} = \text{id}_{\textbf{H}^\circ / \textbf{F}}$, we have that $\Psi_{\theta / \Fitt(\Gamma)} = \text{id}_{\theta / \Fitt(\Gamma)}$. We let $c \in C$ where $c = us$ for $u \in \theta$ and $s \in \textbf{S}$. We then see that $\Psi(u) = xu$ where $x \in \Fitt(\Gamma)$. We then write
$$
\Psi(c) = \Psi(us) = \Psi(u) \Psi(s) = \Psi(u) s = xus = xc \in \Gamma.
$$
Since $\Psi(\textbf{C}) = \textbf{C}$, we have $\Psi(c) \in \Gamma \cap \textbf{C}= C$. Thus, $\Psi$ stabilizes $C$. Therefore,
$$
\Psi(\Gamma) = \Psi(\Fitt(\Gamma) C) = \Psi(\Fitt(\Gamma)) \cdot \Psi(C) = \Fitt(\Gamma) \cdot C = \Gamma.
$$
Hence, $\Psi$ is the extension of a map in $A_{\Gamma  |  \Fitt(\Gamma)}^C[\theta]^1$.
\end{proof}

As a natural corollary, we have the following.
\begin{cor}\label{s_arithmetic_A_H|G}
Let the data $(\Gamma, C, \textbf{S})$ be described as as above, and suppose that $\Fitt(\Gamma)$ is a $S$-arithmetic lattice in its Zariski closure in $\textbf{H}_\Gamma$, then the group $A_{\Gamma  | \Fitt(\Gamma)}$ is a $S$-arithmetic subgroup of $\textbf{A}_{\textbf{H}_\Gamma |  \textbf{F}}$.
\end{cor}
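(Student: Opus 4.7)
The plan is to assemble the three key results immediately preceding the corollary: Lemma \ref{lemma8.7_baues_grunewald}, Proposition \ref{lemma8.8_baues_grunewald}, and Proposition \ref{prop8.2_baues_grunewald}. Concretely, Lemma \ref{lemma8.7_baues_grunewald} tells us that the group $\Inn_{\Fitt(\Gamma)}^\Gamma \cdot A_{\Gamma \,|\, \Fitt(\Gamma)}^C[\theta]^1$ has finite index in $A_{\Gamma \,|\, \Fitt(\Gamma)}$. Applying Proposition \ref{lemma8.8_baues_grunewald}, the second factor can be rewritten as $\textbf{A}_{\textbf{S}}^1[\theta]$, so the whole expression becomes $\Inn_{\Fitt(\Gamma)}^\Gamma \cdot \textbf{A}_{\textbf{S}}^1[\theta]$. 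This is the bridge from the abstract automorphism group $\Aut(\Gamma)$ to the algebraic setting inside $\Aut(\textbf{H}_\Gamma)$.

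Next, I would identify $\Inn_{\Fitt(\Gamma)}^\Gamma$ with $\Inn_{\Fitt(\Gamma)}^{\textbf{H}_\Gamma}$ as subgroups of $\Aut(\textbf{H}_\Gamma)$. Indeed, under the canonical embedding $\Aut(\Gamma) \hookrightarrow \Aut(\textbf{H}_\Gamma)$ provided by Corollary \ref{unipotent_on_fitting_subgroup}, the inner automorphism of $\Gamma$ given by conjugation by $g \in \Fitt(\Gamma)$ extends uniquely to the inner automorphism of $\textbf{H}_\Gamma$ by the same element $g$, so the two groups of inner automorphisms coincide. Consequently,
$$
\Inn_{\Fitt(\Gamma)}^\Gamma \cdot \textbf{A}_{\textbf{S}}^1[\theta] \;=\; \Inn_{\Fitt(\Gamma)}^{\textbf{H}_\Gamma} \cdot \textbf{A}_{\textbf{S}}^1[\theta],
$$
and the right-hand side is $S$-arithmetic in $\textbf{A}_{\textbf{H}_\Gamma \,|\, \textbf{F}}$ by Proposition \ref{prop8.2_baues_grunewald}, thanks to the standing hypothesis that $\Fitt(\Gamma)$ is an $S$-arithmetic lattice in $\textbf{F}$.

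To conclude, I note that $A_{\Gamma \,|\, \Fitt(\Gamma)}$ visibly sits inside $\textbf{A}_{\textbf{H}_\Gamma \,|\, \textbf{F}}$: any $\varphi \in A_{\Gamma \,|\, \Fitt(\Gamma)}$ extends to $\Psi \in \Aut(\textbf{H}_\Gamma)$ preserving the Zariski closure $\textbf{F} = \overline{\Fitt(\Gamma)}$, and the Zariski density of $\Gamma$ in $\textbf{H}_\Gamma$ forces $\Psi$ to act as the identity on $\textbf{H}_\Gamma/\textbf{F}$. Since commensurability is transitive, $A_{\Gamma \,|\, \Fitt(\Gamma)}$ is commensurable with the $S$-arithmetic group $\Inn_{\Fitt(\Gamma)}^{\textbf{H}_\Gamma} \cdot \textbf{A}_{\textbf{S}}^1[\theta]$, and therefore commensurable with $\textbf{A}_{\textbf{H}_\Gamma \,|\, \textbf{F}}(\Z[\tfrac{1}{S}])$. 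This is exactly the definition of $A_{\Gamma \,|\, \Fitt(\Gamma)}$ being an $S$-arithmetic subgroup of $\textbf{A}_{\textbf{H}_\Gamma \,|\, \textbf{F}}$.

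The proof is essentially a bookkeeping exercise once all the preparatory material is in place; the genuine difficulty has already been absorbed into the earlier results, in particular Proposition \ref{prop8.2_baues_grunewald} (where the good unipotent shadow $\theta$ witnesses $S$-arithmeticity) and Lemma \ref{lemma_8.5_baues_grunewald} (where the finite cohomology set $H^1(T/T_0, \Fitt(\Gamma) \cap C)$ controls the passage to $A_{\Gamma\,|\,\Fitt(\Gamma)}^C[\theta]^1$). The only point requiring care is verifying that $\Inn_{\Fitt(\Gamma)}^\Gamma$ and $\Inn_{\Fitt(\Gamma)}^{\textbf{H}_\Gamma}$ coincide as subgroups of $\Aut(\textbf{H}_\Gamma)$, but this is a direct consequence of uniqueness of the extension of automorphisms in Corollary \ref{unipotent_on_fitting_subgroup}.
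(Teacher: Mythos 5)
Your proposal is correct and follows essentially the same route as the paper: Lemma \ref{lemma8.7_baues_grunewald} for the finite-index reduction, Proposition \ref{lemma8.8_baues_grunewald} to rewrite the factor as $\textbf{A}_{\textbf{S}}^1[\theta]$, the identification $\Inn_{\Fitt(\Gamma)}^\Gamma = \Inn_{\Fitt(\Gamma)}^{\textbf{H}_\Gamma}$ via uniqueness of extensions, and Proposition \ref{prop8.2_baues_grunewald} to conclude $S$-arithmeticity. No gaps.
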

\begin{proof}
Lemma \ref{lemma8.7_baues_grunewald} implies that $\Inn_{\Fitt(\Gamma)}^\Gamma \cdot A_{\Gamma  |  \Fitt(\Gamma)}^C[\theta]^1$ is a finite index subgroup of $A_{\Gamma \: | \: \Fitt(\Gamma)}$. Proposition \ref{lemma8.8_baues_grunewald} implies that 
$$
\Inn_{\Fitt(\Gamma)}^\Gamma \cdot A_{\Gamma  |  \Fitt(\Gamma)}^C[\theta]^1 = \Inn_{\Fitt(\Gamma)}^{\Gamma} \cdot \textbf{A}_{\textbf{S}}^1[\theta].
$$
Since every automorphism of $\Gamma$ extends uniquely to an automorphism of $\textbf{H}_\Gamma$, we have that $\Inn_{\Fitt(\Gamma)}^{\Gamma} = \Inn_{\Fitt(\Gamma)}^{\textbf{H}_\Gamma}$. Hence,
$$
\Inn_{\Fitt(\Gamma)}^\Gamma \cdot A_{\Gamma  |  \Fitt(\Gamma)}^C[\theta]^1 = \Inn_{\Fitt(\Gamma)}^{\textbf{H}_\Gamma} \cdot \textbf{A}_{\textbf{S}}^1[\theta].
$$
Proposition \ref{prop8.2_baues_grunewald} implies that $\Inn_{\Fitt(\Gamma)}^\Gamma \cdot A_{\Gamma  |   \Fitt(\Gamma)}^C[\theta]^1$ is a $S$-arithmetic subgroup of $\textbf{A}_{\textbf{H}_\Gamma  |  \textbf{F}}$. Since $\Inn_{\Fitt(\Gamma)}^\Gamma \cdot A_{\Gamma  | \Fitt(\Gamma)}^C[\theta]^1$ is a finite index subgroup of $A_{\Gamma  | \Fitt(\Gamma)}$, we have that $A_{\Gamma  |  \Fitt(\Gamma)}$ is a $S$-arithmetic subgroup of $\textbf{A}_{\textbf{H}_\Gamma  | \textbf{F}}$ as desired.
\end{proof}

%

\section{The proof of Theorem \ref{theorem1.4_baues_grunewald}}\label{sec_theorem1.4_proof}

\subsection{For \emph{WTN}-groups}
This section is devoted to the proof of Theorem \ref{theorem1.4_baues_grunewald} for \emph{WTN}-groups. Towards this end, let $\Gamma$ be a finitely generated FAR \emph{WTN}-group. We embed $\Gamma$ into the group of $\mathbb{Q}$-points of its $\Q$-algebraic hull $\textbf{H}_\Gamma$ with the induced embedding $\Aut(\Gamma) \leq \Aut(\textbf{H}_\Gamma)$. We let $\textbf{U}(\textbf{H}_\Gamma)$ be the unipotent radical of $\textbf{H}_\Gamma$. We fix a thickening $\tilde{\Gamma} = (\Fitt(\Gamma))^{\frac{1}{m}} \cdot \Gamma$ of $\Gamma$ in $\textbf{H}_\Gamma$. Moreover, we choose this thickening to satisfy the data $(\tilde{\Gamma}, C, \textbf{S})$ as defined in Section \ref{sec:thick}. Indeed, the group $C \leq \tilde{\Gamma}$ is a maximal virtually nilpotent supplement of $\Fitt(\Gamma)$. We fix $\textbf{C}$ as the Zariski closure of $C$, fix $\textbf{S} \leq \textbf{C}$ a maximal $\mathbb{Q}$-defined $d$-subgroup, and fix $\theta = \theta_{\tilde{\Gamma}}$ as a good unipotent shadow for $\tilde{\Gamma}$. We know that such a thickening exists by Proposition \ref{thickening_unipotent_shadow_strong}.

\begin{prop}\label{finite_index_aut(g)}
Let $\Gamma$ be a finitely generated virtually FAR \emph{WTN}-group. We then have $\Inn_\Gamma \cdot A_{\Gamma  | \Fitt(\Gamma)}$ has finite index in $\Aut(\Gamma)$.
\end{prop}
\begin{proof}
We claim that
$$
A_{\Gamma  |  \Fitt(\Gamma)} = \Aut(\Gamma) \cap \textbf{A}_{\textbf{H}_\Gamma  | \textbf{F}} = \Aut(\Gamma) \cap \textbf{A}_{\textbf{H}_\Gamma  |  \textbf{U}(\textbf{H}_\Gamma)}.
$$
We note that if $\varphi \in A_{\Gamma  | \Fitt(\Gamma)}$, then $\varphi|_{\Gamma / \Fitt(\Gamma)} = \text{id}_{\Gamma / \Fitt(\Gamma)}$. As $\Gamma/\Fitt(\Gamma)$ is Zariski dense in $\textbf{H}_\Gamma / \textbf{F}$, it is clear that $\Psi$ induces the identity on $\textbf{H}_\Gamma / \textbf{F}$. Hence, we have $\varphi \in \Aut(\Gamma) \cap \textbf{A}_{\textbf{H}_\Gamma | \textbf{F}}$, leading to the inclusion $A_{\Gamma |  \Fitt(\Gamma)} \leq \Aut(\Gamma) \cap \textbf{A}_{\textbf{H}_\Gamma  | \textbf{F}}$. 

For the opposite inclusion, Proposition \ref{unipotent_hull_fitt_alg_hull} implies that $\textbf{U}(\textbf{H}_\Gamma) \cap \Gamma = \textbf{F} \cap \Gamma = \Fitt(\Gamma)$ where $\textbf{F}$ is the Zariski closure of $\Fitt(\Gamma)$ in $\textbf{H}_\Gamma$. Thus, $\Psi_{\textbf{H}_\Gamma / \textbf{F}}$ when restricted to $\Gamma / \Fitt(\Gamma)$ is equal to $\varphi|_{\Gamma / \Fitt(\Gamma)}$
and thus $\varphi \in A_{\Gamma |  \Fitt(\Gamma)}$ and $A_{\Gamma |  \Fitt(\Gamma)} = \Aut(\Gamma) \cap \textbf{A}_{\textbf{H}_\Gamma | \textbf{F}}$. The second equality follows similarly.

Let $\pi_{\textbf{S}} \colon \textbf{H}_\Gamma \to \textbf{S}$ be the natural projection, and let $K= \pi_{\tilde{\textbf{S}}}(\Gamma)$ and $H = \pi_{\textbf{S}}(\Gamma_0)$ where $\Gamma_0 =\Gamma \cap \textbf{H}_\Gamma^\circ$. Observe that $\textbf{S}$ is a $\mathbb{Q}$-defined $d$-group and $(\textbf{S})^\circ$ is a $\mathbb{Q}$-defined torus. Let $\varphi \in \Aut(\Gamma)$, and let $\Psi \in \Aut(\textbf{H}_\Gamma)$ be its extension to $\textbf{H}_\Gamma$. We then see the algebraic isomorphism induces an algebraic isomorphism $\Psi_{\textbf{S}}$ of the quotient $\textbf{S}$, and since $\Psi$ is the extension of $\varphi$, we must have that $\Psi$ preserves $K$ and $H$. We denote the restriction of $\Psi_{\textbf{S}}$ to $K$ as $\varphi_K$, and observe the map $\lambda \colon \Aut(\Gamma) \to \Aut(K)$ given by $\lambda(\varphi) = \varphi_K$ is a morphism where $\varphi_K(H) = H$. When $\varphi \in A_{\Gamma  |  \Fitt(\Gamma)}$, we have that $\Psi_{\textbf{S}} = \text{id}_{\textbf{S}}$ which implies that $\varphi_K = \text{id}_K$. Hence, $\ker \lambda = A_{\Gamma  |  \Fitt(\Gamma)}$.  

By the rigidity of tori, it holds that 
$$
|N_{\Aut(\Gamma)}((\textbf{S})^\circ) : Z_{\Aut(G)}(\textbf{S})^\circ)| < \infty.
$$
Hence, a finite index subgroup of $\Aut(\Gamma) / A_{\Gamma  |  \Fitt(\Gamma)}$ acts by identity on $H$. Letting 
$$
\Aut(K, H) = \left\{ \varphi \in \Aut(K) \: | \: \varphi(H) = H, \varphi|_H = \text{id}_H \right\}
$$
and observing that $H$ is a finitely generated torsion-free abelian subgroup of $K$ of finite index, \cite[Lemma 6.2.]{baues_grunewald} implies that $\Inn_H^K$ is finite index in $\Aut(K,H)$. We then see that $\lambda(\Inn_{\Gamma_0}^\Gamma) = \Inn_H^K$ by definition. Therefore, we have that $\lambda(\Inn_{\Gamma_0}^\Gamma)$ has finite index in $\lambda(\Aut(\Gamma))$. At this point, we have that $\Inn_{\Gamma_0}^\Gamma \cdot A_{\Gamma |  \Fitt(\Gamma)}$ is the pull back of a finite index subgroup which means that it is finite index in $\Aut(\Gamma)$. Since $\Inn_{\Gamma_0}^\Gamma \cdot A_{\Gamma  | \Fitt(\Gamma)} \leq \Inn_{\Gamma} \cdot A_{\Gamma  | \Fitt(\Gamma)}$, we have that $\Inn_{\Gamma} \cdot A_{\Gamma  | \Fitt(\Gamma)}$ is finite index in $\Aut(\Gamma)$ as desired.
\end{proof}

For the convenience of the reader, we restate Theorem \ref{theorem1.4_baues_grunewald}.
\begin{thm}\label{finite_index_aut(gamma)}
Let $\Gamma$ be a finitely generated solvable group of finite abelian ranks with spectrum $S$ and trivial maximal normal torsion subgroup such that $\Fitt(\Gamma)$ is unipotently $S$-arithmetic. The group $A_{\Gamma  |  \Fitt(\Gamma)}$ is a normal subgroup of $\Aut(\Gamma)$ that is $S$-arithmetic in its Zariski closure in $\Aut(\textbf{H}_\Gamma)$. There exists a nilpotent group $B \leq \Gamma$ such that $A_{\Gamma |  \Fitt(\Gamma)} \cdot \Inn_B^\Gamma$ has finite index in $\Aut(\Gamma)$. 
\end{thm}
\begin{proof}
We see that $\Aut(\Gamma)$ is contained in $\Aut(\tilde{\Gamma})$ as a subgroup of finite index as a consequence of Proposition \ref{index_auto_group_thickening}. We note that $\Fitt(\tilde{\Gamma}) \cap \Gamma = \Fitt(\Gamma)$ which implies that $A_{\Gamma  |  \Fitt(\Gamma)} = A_{\tilde{\Gamma} | \Fitt(\tilde{\Gamma})} \cap \Aut(\Gamma)$ is finite index in $A_{\tilde{\Gamma}  | \Fitt(\tilde{\Gamma})}$. Corollary \ref{s_arithmetic_A_H|G} the implies that $A_{\Gamma  |  \Fitt(\Gamma)}$ is a $S$-arithmetic subgroup of $\textbf{A}_{\textbf{H}_\Gamma | \textbf{F}}$.

Proposition \ref{finite_index_aut(g)} implies that $\Inn_\Gamma \cdot  A_{\Gamma  |  \Fitt(\Gamma)}$ is finite index in $\Aut(\Gamma)$. Moreover $\Tilde{\Gamma} = \Fitt(\tilde{\Gamma}) \cdot C$ contains $\Gamma$ as a finite index subgroup. Hence, $\Fitt(\Gamma) \cdot (\Gamma \cap C)$ is finite index in $\Gamma$. Since $\Inn_{\Fitt(\Gamma)}$ is a subgroup of $A_{\Gamma  | \Fitt(\Gamma)}$, we have 
$$
\Inn_{\Gamma}\cdot A_{\Gamma  | \Fitt(\Gamma)} = \Inn_{\Gamma \cap C}^\Gamma \cdot A_{\Gamma  |  \Fitt(\Gamma)}.
$$
A finite index nilpotent subgroup $B \leq C \cap \Gamma$ leads to the result.
\end{proof}

By the same argument, $\Aut(\Gamma)$ contains a finite index subgroup which is a subgroup of $\Aut\left(\textbf{H}_\Gamma\right) (\Z[1/S])$ and thus lies within an $S$-arithmetic group.

\subsection{For virtually torsion-free groups}
In this subsection, we transfer our arithmeticity results from finitely generated virtually  FAR \emph{WTN}-groups $\Gamma$ to the more general case where we do no longer assume that $\Gamma$ is \emph{WTN} but is virtually torsion-free, i.e.~with $\tau(\Gamma)$ finite. 
We note that $\tau(\Gamma)$ is characteristic and that the quotient group $\hat{\Gamma} = \Gamma / \tau(\Gamma)$ is WTN. If $S$ is the spectrum for $\Gamma$, we then see that $S$ is the spectrum for $\hat{\Gamma}$. From now on, we assume that $\Fitt(\Gamma)$ is unipotently $S$-arithmetic. 

First we fix some notation. For any normal subgroup $N \triangleleft G$ with quotient $\bar{G} = G/N$, we denote the inclusion and quotient maps as $i_N: N \to G$ and $\pi_{\bar{G}}: G \to \bar{G}$. If $N$ is moreover characteristic and $\varphi \in \Aut(G)$ an automorphism, then we write $\varphi_N$ and $\varphi_{\bar{G}}$ for the induced automorphisms of $N$ and $\bar{G}$ respectively. As $\tau(\Gamma)$ is characteristic, we have an induced morphism $\kappa: \Aut(\Gamma) \to \Aut(\hat{\Gamma})$ defined as $\kappa(\varphi) = \varphi_{\hat{\Gamma}}$, which will be the main tool for this section.

As the group $\Gamma$ is $\mathbb{Z}[\frac{1}{S}]$-linear, it has a finite index normal subgroup $\Gamma_0$ that is torsion-free, and in particular, satisfies $\Gamma_0 \cap \tau(\Gamma) = \{1\}$. We can assume that $\Gamma_0$ is characteristic, by replacing it by $\Gamma^{n}$ where $n$ is the index of $\Gamma_0$ in $\Gamma$. Denote 
$$
Q = \Gamma / \Gamma_0, \quad \hat{\Gamma}_0 = \Gamma_0 / \tau(\Gamma) \quad \text{ and } \quad \tilde{Q} = \hat{\Gamma}/\hat{\Gamma}_0.
$$
By the discussion above, $\hat{\Gamma}_0$ is isomorphic to $\Gamma_0$ via the map $\pi_{\hat{\Gamma}}$ and $\tilde{Q}$ is a quotient of $Q$ by the finite normal subgroup $\pi_Q(\tau(\Gamma))$. The quotient morphisms $\pi_{\hat{\Gamma}} \colon \Gamma \to \hat{\Gamma}$ and $\pi_Q \colon \Gamma \to Q$ induces an injective morphism
$$
i \colon \Gamma \to \hat{\Gamma} \times Q .
$$
On the other hand, we also have a map
\begin{align*} j \colon \Aut(\Gamma) &\to \Aut(\hat{\Gamma}) \times \Aut(Q) \\ \varphi &\mapsto (\varphi_{\hat{\Gamma}}, \varphi_Q) = (\kappa(\varphi),\varphi_Q),
\end{align*}
where an easy computation shows that it is injective. We define finite index subgroups of $\Aut(\Gamma)$ and $\Aut(\hat{\Gamma})$ as follows:
\begin{eqnarray*}
A_0 &=&\{\varphi \in \Aut(\Gamma) \: | \: \varphi_{Q} = \text{id}_{Q}\} \leq \Aut(\Gamma),\\
\tilde{A}_0 &=& \{\varphi \in \Aut(\hat{\Gamma}) \: | \: \varphi_{\tilde{Q}} = \text{id}_{\tilde{Q}}\} \leq \Aut(\hat{\Gamma}).
\end{eqnarray*}

Note that $A_0$ is the kernel of the projection on the second component $\Aut(\hat{\Gamma}) \times \Aut(Q) \to \Aut(Q)$ composed with $j$.

\begin{lemma}\label{11.3_bause_grunewald}
Using the above notation, the following statements hold.
\begin{enumerate}[(1)]
\item The group $i(\Gamma)$ is of finite index in $\hat{\Gamma} \times Q$.
\item The group $\pi_{\hat{\Gamma}} \left( \Fitt(\Gamma)\right)$ has finite index in $\Fitt(\hat{\Gamma})$.
\item It holds that the image $\kappa\left(A_0\right) = \tilde{A}_0$, and in particular, $\kappa(\Aut(\Gamma))$ is of finite index in $\Aut(\hat{\Gamma})$.
\item The subgroup $A_{\Gamma | \Fitt(\Gamma)} \leq \Aut(\Gamma)$ is mapped by $\kappa$ onto a finite index subgroup of $A_{\hat{\Gamma} | \Fitt(\hat{\Gamma})}$.
\end{enumerate}
\end{lemma}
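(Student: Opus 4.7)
The key structural observation underlying all four parts is that $i(\Gamma)$ identifies with the fiber product of $\hat{\Gamma}$ and $Q$ over the finite common quotient $\tilde{Q}$, via the maps $\hat{\Gamma} \twoheadrightarrow \hat{\Gamma}/\hat{\Gamma}_0 = \tilde{Q}$ and $Q \twoheadrightarrow Q/\pi_Q(\tau(\Gamma)) = \tilde{Q}$. Indeed, a pair $(x,y) \in \hat{\Gamma}\times Q$ lies in $i(\Gamma)$ iff $x$ and $y$ have the same image in $\tilde{Q}$. Part (1) is then immediate: $i(\Gamma)$ is the preimage of the diagonal $\Delta_{\tilde{Q}}\subseteq \tilde{Q}\times\tilde{Q}$, so $[\hat{\Gamma}\times Q:i(\Gamma)]\leq |\tilde{Q}|^{\phantom{1}}$.

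For part (2), the image $\pi_{\hat{\Gamma}}(\Fitt(\Gamma))$ is automatically a normal nilpotent subgroup of $\hat{\Gamma}$, hence contained in $\Fitt(\hat{\Gamma})$. For the reverse finite-index inclusion, I would introduce $F = \pi_{\hat{\Gamma}}^{-1}(\Fitt(\hat{\Gamma}))$, which is characteristic in $\Gamma$ and contains $\tau(\Gamma)$. Then $F\cap\Gamma_0$ is normal in $\Gamma$, has finite index in $F$, and injects via $\pi_{\hat{\Gamma}}$ into $\Fitt(\hat{\Gamma})$ because $\Gamma_0\cap\tau(\Gamma)=\{1\}$. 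In particular $F\cap\Gamma_0$ is nilpotent and therefore lies in $\Fitt(\Gamma)$, whose image under $\pi_{\hat{\Gamma}}$ already contains $\hat{\Gamma}_0\cap\Fitt(\hat{\Gamma})$, a finite-index subgroup of $\Fitt(\hat{\Gamma})$.

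Part (3) uses the fiber-product description directly. The inclusion $\kappa(A_0)\subseteq\tilde{A}_0$ is immediate since $\tilde{Q}$ is a quotient of $Q$. Conversely, given $\psi\in\tilde{A}_0$, the automorphism $\psi\times\mathrm{id}_Q$ of $\hat{\Gamma}\times Q$ preserves the diagonal condition in $\tilde{Q}\times\tilde{Q}$ (because $\psi$ acts trivially on $\tilde{Q}$), so it restricts to an automorphism of $i(\Gamma)\cong\Gamma$, defining $\varphi\in A_0$ with $\kappa(\varphi)=\psi$. Since $\hat{\Gamma}_0=\hat{\Gamma}^n$ is characteristic and $\tilde{Q}$ is finite, $\tilde{A}_0$ is the kernel of the morphism $\Aut(\hat{\Gamma})\to\Aut(\tilde{Q})$ into a finite group, so $\tilde{A}_0$ has finite index in $\Aut(\hat{\Gamma})$, and the second assertion of (3) follows from $\kappa(\Aut(\Gamma))\supseteq\kappa(A_0)=\tilde{A}_0$.

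Finally, for part (4), the forward inclusion $\kappa(A_{\Gamma|\Fitt(\Gamma)})\subseteq A_{\hat{\Gamma}|\Fitt(\hat{\Gamma})}$ follows from (2), since $\hat{\Gamma}/\Fitt(\hat{\Gamma})$ is a quotient of $\Gamma/(\Fitt(\Gamma)\cdot\tau(\Gamma))$. For the reverse finite-index statement, my plan is first to show that $A_{\Gamma|\Fitt(\Gamma)}$ has finite index in $\kappa^{-1}(A_{\hat{\Gamma}|\Fitt(\hat{\Gamma})})$: the quotient embeds into those automorphisms of the finitely generated group $\Gamma/\Fitt(\Gamma)$ that act trivially on the finite quotient $\Gamma/F$, and such automorphisms correspond to $1$-cocycles $\Gamma/\Fitt(\Gamma)\to F/\Fitt(\Gamma)$ with the conjugation action. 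Since $F/\Fitt(\Gamma)$ is finite by (2) and $\Gamma/\Fitt(\Gamma)$ is finitely generated, this cocycle set is finite. Combining with (3) then gives that $\kappa(A_{\Gamma|\Fitt(\Gamma)})$ has finite index in $A_{\hat{\Gamma}|\Fitt(\hat{\Gamma})}\cap\kappa(\Aut(\Gamma))$, which is already finite-index in $A_{\hat{\Gamma}|\Fitt(\hat{\Gamma})}$. I expect the main obstacle to be the cocycle finiteness step in (4): care is required because $F/\Fitt(\Gamma)$ is a priori non-abelian, but once the cocycle identity $\bar{g}\mapsto\alpha(\bar{g})\bar{g}^{-1}$ is checked directly, finite generation of $\Gamma/\Fitt(\Gamma)$ bounds the number of such cocycles by $|F/\Fitt(\Gamma)|^k$ for any finite generating set of size $k$.
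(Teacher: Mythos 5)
Your proof is correct, and while parts (1) and (2) match the paper's argument closely, you handle (3) and (4) differently. For (3), the paper lifts $\psi \in \tilde{A}_0$ by hand: it first defines $\varphi_0 \in \Aut(\Gamma_0)$ through the isomorphism $\pi_{\hat{\Gamma}}|_{\Gamma_0} \colon \Gamma_0 \to \hat{\Gamma}_0$ and then extends to all of $\Gamma$ using a coset decomposition $\Gamma = \bigsqcup_i \gamma_i \Gamma_0$ and correction terms $\delta_i$, which requires a "routine verification" that the result is an automorphism. Your fiber-product description of $i(\Gamma)$ as the pullback of $\hat{\Gamma} \to \tilde{Q} \leftarrow Q$ packages this verification into the single observation that $\psi \times \mathrm{id}_Q$ preserves the diagonal condition; this is cleaner and avoids the case-by-case check, at the cost of first establishing the fiber-product identification (which you do correctly, and which also gives (1) with the explicit index $|\tilde{Q}|$). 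For (4), the paper only says that one should "adapt the reasoning of (3)" to the quotient $\Gamma/\Fitt(\Gamma) \to \hat{\Gamma}/\Fitt(\hat{\Gamma})$ with finite kernel, whereas you give a concrete argument: an element of $\kappa^{-1}(A_{\hat{\Gamma}|\Fitt(\hat{\Gamma})})$ induces on $\Gamma/\Fitt(\Gamma)$ an automorphism $\beta$ with $\beta(\bar{g})\bar{g}^{-1} \in F/\Fitt(\Gamma)$, the assignment $\bar{g} \mapsto \beta(\bar{g})\bar{g}^{-1}$ is a (possibly non-abelian) $1$-cocycle determined by its values on a finite generating set, and $F/\Fitt(\Gamma)$ is finite by (2) together with the finiteness of $\tau(\Gamma)$; distinct cosets of $A_{\Gamma|\Fitt(\Gamma)}$ give distinct cocycles, so the index is at most $|F/\Fitt(\Gamma)|^k$. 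This is a legitimate and more self-contained route to the finite-index claim, and combining it with (3) exactly as you do yields that $\kappa(A_{\Gamma|\Fitt(\Gamma)})$ has finite index in $A_{\hat{\Gamma}|\Fitt(\hat{\Gamma})}$. The only cosmetic caveat is that "such automorphisms correspond to $1$-cocycles" should be read as an injection of coset representatives into the (finite) cocycle set rather than a bijection, but that is all your argument uses.
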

\begin{proof}
The first statement is clear, as the group $Q$ is finite and $i(\Gamma_0) = \hat{\Gamma}_0 \times \{1\}$ is already a finite index subgroup of $\hat{\Gamma}$. 

For the second statement, note that $\pi_{\hat{\Gamma}}(\Fitt(\Gamma))$ is a normal nilpotent subgroup in $\hat{\Gamma}$, implying that $\pi_{\hat{\Gamma}}(\Fitt(\Gamma)) \leq \Fitt(\hat{\Gamma})$. Hence, it suffices to show that this inclusion is finite index. For this, take $N_0 = \Fitt(\hat{\Gamma}) \cap \hat{\Gamma}_0$, which is a nilpotent normal subgroup of $\hat{\Gamma}$ that lies in $\hat{\Gamma}_0$. Since $\hat{\Gamma}_0$ is finite index in $\hat{\Gamma}$, the subgroup $N_0$ has finite index in $\Fitt(\hat{\Gamma})$, and thus, $\pi_{\hat{\Gamma}}^{-1}(N_0)$ is also finite index in $\pi_{\hat{\Gamma}}^{-1}(\Fitt(\hat{\Gamma}))$. Note that $\pi_{\hat{\Gamma}}$ is injective on the group $\Gamma_0$, and thus, $\pi_{\hat{\Gamma}}^{-1}(N_0)$ is a nilpotent normal subgroup of $\Gamma_0$. As $\Gamma_0$ is characteristic, it follows that $\pi_{\hat{\Gamma}}^{-1}(N_0) \subset \Fitt(\Gamma)$ and the latter is hence also of finite index in $\pi_{\hat{\Gamma}}^{-1}(\Fitt(\hat{\Gamma}))$. We thus get the second statement.

For the third statement, we need to show that $\kappa(A_0) = \tilde{A}_0$, which easily implies the second statement. It follows immediately that $\kappa(A_0) \leq  \tilde{A}_0$ as $\tilde{Q}$ is a quotient of $Q$. Hence, it suffices to show that for each $\varphi \in \tilde{A}_0$ there exists $\psi \in A_0$ such that $\kappa(\psi) = \varphi$. For any $\gamma \in \Gamma$, we write $\tilde{\gamma} = \pi_{\tilde{\Gamma}}(\gamma)$. Given that $\pi_{\tilde{\Gamma}}$ forms an isomorphism between $\Gamma_0$ and $\tilde{\Gamma}_0$, the automorphism $\varphi$ defines $\varphi_0 \in \Aut(\Gamma_0)$ uniquely with the property that
$$
\pi_{\tilde{\Gamma}}(\varphi_0(\gamma)) = \varphi(\tilde{\gamma})
$$ 
for $\gamma \in \Gamma_0$. Now let $\Gamma =  \bigsqcup_{i=1}^k \displaystyle \gamma_i \Gamma_0$ be a coset decomposition. There exists a unique $\delta_i \in \Gamma_0$ such that $\varphi_0(\tilde{\gamma}_i) = \tilde{\gamma}_i \tilde{\delta}_i$. Now define the map $\psi: \Gamma \to \Gamma$ as
$$
\psi(\gamma_i s) = \gamma_i \delta_i \varphi_0(s)
$$
with $s \in \Gamma_0$. A routine verification shows that $\psi$ is an automorphism of $\Gamma$ which is a lift of $\varphi$.

For the final statement, from (2) we find a surjective map $\Gamma / \Fitt(\Gamma) \to \hat{\Gamma}/ \Fitt(\hat{\Gamma})$ which has finite kernel. Given that this morphism is surjective, the morphism $\Aut(\Gamma) \to \Aut(\hat{\Gamma})$ sends $A_{\Gamma | \Fitt(\Gamma)}$ to $A_{\hat{\Gamma} | \Fitt(\hat{\Gamma})}$. Let $A_1$ be the preimage of $A_{\hat{\Gamma} | \Fitt(\hat{\Gamma}}$ in $\Aut(\Gamma)$, then one can easily show that a finite index subgroup of $A_1$ acts as the identity on $\hat{\Gamma} / \Fitt(\hat{\Gamma})$. Hence, $A_{\Gamma | \Fitt(\Gamma)}$ has finite index in $A_1$ which implies (4).
\end{proof}

We now demonstrate for a finitely generated virtually torsion-free FAR-group $\Gamma$ that $\Aut(\Gamma)$ and $\Aut(\Gamma / \tau(\Gamma))$ are commensurable.
\begin{prop}\label{aut_gamma_arithmetic}
Let $\Gamma$ be a finitely generated virtually torsion-free FAR-group. The groups $\Aut(\Gamma)$ and $\Aut(\hat{\Gamma})$ are commensurable, and in particular, the group $\Aut(\hat{\Gamma})$ is an $S$-arithmetic group if and only if $\Aut(\Gamma)$ is $S$-arithmetic.
\end{prop}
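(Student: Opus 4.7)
The plan is to analyze the morphism $\kappa \colon \Aut(\Gamma) \to \Aut(\hat{\Gamma})$ induced by the characteristic subgroup $\tau(\Gamma)$, and to demonstrate that it has both a finite kernel and a finite index image, which immediately yields the commensurability of the two groups. Indeed, Lemma \ref{11.3_bause_grunewald} (3) already provides that $\kappa(\Aut(\Gamma))$ has finite index in $\Aut(\hat{\Gamma})$, so the only remaining ingredient is the finiteness of $\ker(\kappa)$.

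To bound the kernel, I would use two facts: that $\tau(\Gamma)$ is finite (by Lemma \ref{lem:equivalencesRF}, using virtual torsion-freeness) and that $\Gamma$ is finitely generated, say by $g_1, \ldots, g_k$. An element $\varphi \in \ker(\kappa)$ induces the identity on $\hat{\Gamma} = \Gamma/\tau(\Gamma)$, which is equivalent to saying that $\varphi(g) g^{-1} \in \tau(\Gamma)$ for every $g \in \Gamma$. In particular, $\varphi(g_i) \in g_i \tau(\Gamma)$ for each $i$, and since $\varphi$ is determined by its values on a generating set, there are at most $|\tau(\Gamma)|^k$ possible automorphisms in $\ker(\kappa)$. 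As $|\tau(\Gamma)| < \infty$, this bound is finite.

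Combining the finiteness of $\ker(\kappa)$ with the finite index property of its image realizes $\Aut(\Gamma)$ as a finite extension of a finite index subgroup of $\Aut(\hat{\Gamma})$, which is exactly the commensurability assertion. For the second part, the class of $S$-arithmetic groups is invariant under passing to and from finite index subgroups and finite extensions, essentially by the very definition in Section \ref{background_linear_algebraic_groups}, so $\Aut(\Gamma)$ is $S$-arithmetic if and only if $\Aut(\hat{\Gamma})$ is. There is no substantial obstacle here: once Lemma \ref{11.3_bause_grunewald} (3) is available, the only genuinely new input is the elementary counting argument bounding $\ker(\kappa)$, and the remainder is a straightforward commensurability bookkeeping step.
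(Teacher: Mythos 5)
Your reduction to showing that $\kappa$ has finite kernel and finite-index image is natural, and the counting bound $|\ker(\kappa)| \leq |\tau(\Gamma)|^k$ is correct. The gap is in the final inference. A group that surjects with finite kernel onto a finite index subgroup of $\Aut(\hat{\Gamma})$ is not thereby abstractly commensurable with $\Aut(\hat{\Gamma})$: abstract commensurability means having \emph{isomorphic} finite index subgroups, and a finite-by-$H$ group need not contain a finite index subgroup meeting the finite kernel trivially. Relatedly, the claim that $S$-arithmeticity is preserved under finite extensions ``essentially by the very definition'' is not right under the paper's definition (a finite index subgroup isomorphic to an $S$-arithmetic subgroup of an algebraic group): finite central extensions of arithmetic lattices can fail to be residually finite (Deligne's examples), hence fail to be linear and a fortiori fail to be $S$-arithmetic. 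So the passage from ``finite kernel plus finite index image'' to the stated conclusion does not go through as written.

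The repair is short and is exactly what the paper does: work with the finite index subgroup $A_0 = \{\varphi \in \Aut(\Gamma) \mid \varphi_Q = \mathrm{id}_Q\}$ rather than all of $\Aut(\Gamma)$. If $\varphi \in A_0 \cap \ker(\kappa)$, then for every $g \in \Gamma$ one has $\varphi(g)g^{-1} \in \Gamma_0 \cap \tau(\Gamma) = \{1\}$, so $\kappa$ is injective on $A_0$; combined with Lemma \ref{11.3_bause_grunewald}(3), which gives $\kappa(A_0) = \tilde{A}_0$, this exhibits isomorphic finite index subgroups $A_0 \cong \tilde{A}_0$ of $\Aut(\Gamma)$ and $\Aut(\hat{\Gamma})$. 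That is the commensurability asserted, and $S$-arithmeticity then transfers because it is manifestly invariant under passing to and from finite index subgroups. Your kernel estimate is true but is neither needed nor sufficient on its own.
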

\begin{proof}
The groups $\Aut(\Gamma)$ and $\Aut(\hat{\Gamma})$ are commensurable since they contain finite index subgroups $A_0$ and $\tilde{A}_0$ that are isomorphic via $\kappa$. As $S$-arithmeticity is preserved by commensurability, the second part of the proposition follows.
\end{proof}

\begin{prop}\label{arithmetic_A_Gamma_Fitt}
If $\Fitt(\hat{\Gamma})$ is unipotently $S$-arithmetic, then the subgroup $A_{\Gamma | \Fitt(\Gamma)}$ of $\Aut(\Gamma)$ is $S$-arithmetic.
\end{prop}
\begin{proof}
From (4) of Lemma \ref{11.3_bause_grunewald} we know that $\kappa(A_{\Gamma | \Fitt(\Gamma)})$ is a finite index subgroup of $A_{\hat{\Gamma} | \Fitt(\hat{\Gamma})} \times \Aut(Q)$. Since Corollary \ref{s_arithmetic_A_H|G} implies that $A_{\hat{\Gamma} | \Fitt(\hat{\Gamma})}$ is $S$-arithmetic, it  follows that $A_{\Gamma | \Fitt(\Gamma)}$ is $S$-arithmetic.
\end{proof}

We now prove a generalization of Theorem \ref{theorem1.4_baues_grunewald} to finitely generated virtually torsion-free solvable groups of finite abelian ranks which are not necessarily \emph{WTN}.
\begin{proof}
As mentioned above, the map $\kappa$ sends $A_{\Gamma | \Fitt(\Gamma)}$ to a finite index subgroup of $A_{\hat{\Gamma} | \Fitt(\hat{\Gamma})}$. From Theorem \ref{finite_index_aut(gamma)} we know that there exists a $N \leq \Gamma \cap \Gamma_0$ nilpotent subgroup such that $A_{\hat{\Gamma} | \Fitt(\hat{\Gamma})} \cdot \pi_{\hat{\Gamma}}(\Inn_N)$ has finite index in $\Aut(\hat{\Gamma})$. We then have $A_{\Gamma | \Fitt(\Gamma)} \cdot \Inn_N$ is finite index in $\Aut(\Gamma)$. We finish by appealing to Proposition \ref{arithmetic_A_Gamma_Fitt}.
\end{proof}

As mentioned previously, Baues and Grunewald \cite{baues_grunewald} construct an example of a polycyclic group $\Gamma$ such that $\Aut(\Gamma)$ is not arithmetic. Therefore, there is no $S$-arithmeticity statement that can be made for $\Aut(\Gamma)$ in general, but we can always guarantee it for a certain finite index subgroup. To do this we define
$$
\hat{F}(\Gamma)  = \kappa^{-1}(\Fitt(\hat{\Gamma})).
$$
We note that $\hat{F}(\Gamma)$ is normal in $\Gamma$ and that $\Gamma / \hat{F}(\Gamma)$ is a virtually abelian group. Hence, there exists a finite index normal subgroup $\Gamma_0$ of $\Gamma$ such that $\hat{F}(\Gamma) \leq \Gamma_0$ and where $\Gamma_0 / \hat{F}(\Gamma)$ is an abelian finite index subgroup of $\Gamma / \tilde{F}(\Gamma)$. We then have the group $\Gamma / \Gamma_0$ acts by conjugation on the abelian group $\Gamma_0 / \tilde{F}(\Gamma)$. Hence, we have following theorem.

\begin{thm}\label{S_arithmetic_finite_index_aut}
	Let $\Gamma$ be a finitely generated virtually torsion-free solvable group of finite abelian ranks with spectrum $S$. Suppose there is a normal subgroup $\Gamma_0 \leq \Gamma$ of finite index such that $\Gamma_0 / \hat{F}(\Gamma)$ is abelian and where $\Gamma / \Gamma_0$ acts trivially on $\Gamma_0 / \hat{F}(\Gamma)$. If $\Fitt(\Gamma)$ is unipotently $S$-arithmetic, then $\Aut(\Gamma)$ is a $S$-arithmetic group.
\end{thm}

\begin{proof}
By Proposition \ref{aut_gamma_arithmetic}, we may assume that $\tau(\Gamma) = 1$. By assumption, there exists a normal subgroup $\Gamma_0 \leq \Gamma$ of finite index such that $\Gamma_0 / \Fitt(\Gamma)$ is abelian and where $\Gamma / \Gamma_0$ acts trivially by conjugation on $\Gamma_0/ \Fitt(\Gamma)$. Proposition \ref{finite_index_aut(g)} implies $\Inn_\Gamma \cdot A_{\Gamma \: | \: \Fitt(\Gamma)}$ is finite index in $\Aut(\Gamma)$ with $A_{\Gamma \: | \: \Fitt(\Gamma)}$ $S$-arithmetic. By the assumptions, there exists a finite index subgroup of $\Inn_\Gamma$ that lies in $ A_{\Gamma \: | \: \Fitt(\Gamma)}$, leading to the result.
\end{proof}

We have the following immediate corollary.
\begin{cor}
	Let $\Gamma$ be a finitely generated virtually torsion-free solvable group of finite abelian ranks with spectrum $S$ such that  $\Fitt(\Gamma)$ is $S$-arithmetic. There exists a finite index subgroup $\Gamma_0 \leq \Gamma$ such that $\Aut(\Gamma_0)$ is $S$-arithmetic. If $\Gamma / \Fitt(\Gamma)$ is nilpotent, then $\Aut(\Gamma)$ is $S$-arithmetic. 
\end{cor}

The above theorem and its corollary are a generalization of the corollaries to \cite[Theorem 1.9]{baues_grunewald} to finitely generated virtually torsion-free solvable groups of finite abelian ranks.

\section{The structure of $\Out(\Gamma)$}\label{section_structure_out}
In this section, we start with the following theorem which demonstrates that the natural generalization of \cite[Theorem 1.1]{baues_grunewald}  to the setting of finitely generated virtually torsion-free solvable group of finite abelian ranks which are not neccessarily virtually polycyclic groups does not neccesarily hold.  For a non-zero integer $n$, we denote the Baumslag-Solitar group $\text{BS}(1,n)$ as
$$
\text{BS}(1,n) = \left< x,t \: | \: t^{-1}xt = x^n\right>.
$$
\begin{thm}\label{calculation_baumslag_solitar}
Suppose that $n$ is a nonzero integer such that $|n| > 1$ where $S$ is the set of prime factors of $n$, and consider $\Gamma = BS(1,n)$. Then $A_{\Gamma \: | \: \Fitt(\Gamma)}$ and $\Aut(\Gamma)$ are both $S$-arithmetic groups. If $|S| = 1$, then $\Out(\Gamma)$ is finite, and hence, $S$-arithmetic. If $|S|>1$, then $\Out(\Gamma)$ is not a $S$-arithmetic group.
\end{thm}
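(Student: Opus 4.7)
My plan is to begin by modeling $\Gamma = BS(1,n)$ concretely as the semidirect product $\mathbb{Z}[\tfrac{1}{n}] \rtimes \mathbb{Z}$, in which the generator $t$ of $\mathbb{Z}$ acts on $\mathbb{Z}[\tfrac{1}{n}]$ by division by $n$. In this form $\Fitt(\Gamma) = \mathbb{Z}[\tfrac{1}{n}]$ is visibly an $S$-arithmetic lattice in $\mathbb{G}_a$, with $S$ equal to the set of prime divisors of $n$, matching the spectrum of $\Gamma$. This puts us in the setting where Theorem~\ref{theorem1.4_baues_grunewald} applies.

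Next, I would show that every automorphism of $\Gamma$ lies in $A_{\Gamma \mid \Fitt(\Gamma)}$. The only possible automorphisms of $\Gamma/\Fitt(\Gamma) \cong \mathbb{Z}$ are $\pm \mathrm{id}$, and checking compatibility with the defining relation $t^{-1} x t = x^n$ forces the identity whenever $|n| > 1$, since inversion would require $n^2 = 1$. Hence $\Aut(\Gamma) = A_{\Gamma \mid \Fitt(\Gamma)}$, which is $S$-arithmetic by Theorem~\ref{theorem1.4_baues_grunewald}. A direct parametrization then identifies $\Aut(\Gamma)$ with the pairs $(u, a_0) \in \mathbb{Z}[\tfrac{1}{n}]^* \times \mathbb{Z}[\tfrac{1}{n}]$ realizing $x \mapsto x^u$ and $t \mapsto x^{a_0} t$, with the affine multiplication $(u, a_0)(u', a_0') = (uu', a_0 + u a_0')$. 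In particular, $\Aut(\Gamma) \cong \mathbb{Z}[\tfrac{1}{n}]^* \ltimes \mathbb{Z}[\tfrac{1}{n}]$ sits naturally as an $S$-arithmetic subgroup of $\mathbb{G}_m \ltimes \mathbb{G}_a$.

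Next I would compute $\Inn(\Gamma)$ explicitly: conjugation by $t^k x^a$ corresponds to the pair $(n^{-k},\tfrac{a(n-1)}{n})$ in the above parametrization, so $\Inn(\Gamma) \cong n^{\mathbb{Z}} \ltimes (n-1)\mathbb{Z}[\tfrac{1}{n}]$ and therefore
\[
\Out(\Gamma) \cong \bigl(\mathbb{Z}[\tfrac{1}{n}]^*/n^{\mathbb{Z}}\bigr) \ltimes \mathbb{Z}/(n-1).
\]
Using $\mathbb{Z}[\tfrac{1}{n}]^* \cong \mathbb{Z}/2 \times \mathbb{Z}^{|S|}$ and the fact that $n^{\mathbb{Z}}$ is a primitive rank-$1$ summand of the free part, the first factor has $\mathbb{Z}$-rank exactly $|S|-1$. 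When $|S| = 1$ this rank is zero, so $\Out(\Gamma)$ is finite and trivially $S$-arithmetic.

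Finally, when $|S| > 1$, the group $\Out(\Gamma)$ is virtually $\mathbb{Z}^{|S|-1}$, and the infinite part acts nontrivially on $\mathbb{Z}/(n-1)$ via the reduction map $\mathbb{Z}[\tfrac{1}{n}]^* \to (\mathbb{Z}/(n-1))^*$, which is nontrivial because the primes in $S$ are coprime to $n-1$. The main obstacle is now to show that this semidirect structure cannot be commensurable with $\textbf{G}(\mathbb{Z}[\tfrac{1}{S}])$ for any $\mathbb{Q}$-defined linear algebraic group $\textbf{G}$. The strategy is that virtual abelianness of the target forces the identity component $\textbf{G}^0$ to be a $\mathbb{Q}$-torus; any infinite-order element of $\textbf{G}^0(\mathbb{Z}[\tfrac{1}{S}])$ then lies inside this abelian torus and commutes with every torsion element of $\textbf{G}^0(\mathbb{Z}[\tfrac{1}{S}])$, while the contribution of the finite component group $\textbf{G}/\textbf{G}^0$ cannot produce infinite-order elements exhibiting the required nontrivial torsion-twisting. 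This final obstruction analysis is the delicate heart of the proof, requiring careful control over how the torus, its torsion subgroup, and the component group interact to rule out all possible algebraic realizations.
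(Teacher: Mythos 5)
Your computation of $\Aut(\Gamma)$, $\Inn(\Gamma)$ and $\Out(\Gamma)$ is correct and arrives at exactly the same structural description as the paper; the only real difference is that the paper imports the presentation of $\Aut(\text{BS}(1,n))$ from Collins' Proposition A, whereas you rederive it by hand from the model $\Z[\frac{1}{n}]\rtimes\Z$. Both routes give $\Aut(\Gamma)=A_{\Gamma\mid\Fitt(\Gamma)}\cong \Z[\frac{1}{S}]\rtimes\Z[\frac{1}{S}]^*$, the $\Z[\frac{1}{S}]$-points of $\mathbb{Q}\rtimes\mathbb{Q}^*$, hence $S$-arithmetic, and both identify $\Out(\Gamma)$ as a finite extension of $\Z^{|S|-1}$, which settles the case $|S|=1$. (Two harmless slips: conjugation by $t^kx^a$ corresponds to $(n^{-k},\,n^{-k}a(n-1)/n)$ in your parametrization, and $n^{\mathbb{Z}}$ need not be a primitive summand of $\Z[\frac{1}{n}]^*/\{\pm1\}$ --- take $n=4$ --- although the free rank of the quotient is still $|S|-1$.)

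The gap is the final claim for $|S|>1$, which you explicitly defer as ``the delicate heart of the proof'', and the strategy you outline for it would not work. By the paper's definition, being an $S$-arithmetic abstract group depends only on the commensurability class, and $\Out(\Gamma)$ is commensurable with the torsion-free group $\Z^{|S|-1}$: the normal subgroup $\Z/(n-1)$ is finite and the kernel of the action on it has finite index, so the ``nontrivial torsion-twisting'' you propose to exploit disappears on a finite-index subgroup and cannot be the obstruction. What actually has to be shown is that $\Z^{|S|-1}$ itself is not commensurable with $\textbf{G}(\Z[\frac{1}{S}])$ for any $\mathbb{Q}$-defined linear algebraic group $\textbf{G}$ when $|S|\geq 2$; this is precisely the statement the paper reduces to (and then records with essentially no further argument). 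Be aware that this residual statement is genuinely delicate and is not a formal rank count over split tori: for a non-split $\mathbb{Q}$-torus $\textbf{T}$ the rank of $\textbf{T}(\Z[\frac{1}{S}])$ is governed by the local ranks of $\textbf{T}$ at the places in $S\cup\{\infty\}$, and for instance norm-one tori of imaginary quadratic fields yield $S$-unit groups of rank strictly smaller than $|S|$. So your concluding step is both missing and, as sketched, aimed at a feature that is not a commensurability invariant.
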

\noindent For $|n| > 1$, it is clear that $\text{BS}(1,n)$ is a finitely generated solvable group of finite abelian ranks that is not virtually polycyclic. Moreover, the spectrum of $\text{BS}(1,n)$ is given by $\{p_1, \ldots, p_k\}$ where $S= \{p_1, \ldots, p_k\}$ is the set of prime divisors of $n$. It holds that $\Fitt(\text{BS}(1,n)) = \mathbb{Z}[\frac{1}{p_1}, \ldots, \frac{1}{p_k}]$ which is a $S$-arithmetic subgroup of $\mathbb{Q}$.

\begin{proof} Let $S = \{p_1, \ldots, p_k\}$ be the set of prime divisors of $n$, then \cite[Proposition A]{collins} implies that $\Aut(\Gamma)$ has the following presentation:
$$
\Aut(\Gamma) = \left< C, T, Q_1, \ldots Q_k \: \left| \:  \begin{array}{c} T^2 = 1, TCT = C^{-1},  TQ_i = Q_iT, 1 \leq i \leq k \\ Q_i^{-1} C Q_i = C^{p_i}, Q_i Q_j = Q_j Q_i, 1 \leq i < j \leq k  \end{array}  \right. \right>.
$$
In this presentation the automorphisms are defined by
$$\begin{array}{c c}
Q_i(t) = t, & Q_i(x)=x^{p_i}\\
C(t) =tx & C(x) = x\\
T(t) = t, & T(x) = x^{-1}
\end{array}$$
We then see that $A_{\Gamma \: | \: \Fitt(\Gamma)}$ is $\Aut(\Gamma)$, and moreover, we can write
$$
\Aut(\Gamma) \cong \mathbb{Z}\left[\frac{1}{p_1}, \ldots, \frac{1}{p_k}\right] \rtimes \left< \left. \prod_{i=1}^k \pm p_i^{m_i} \: \right| \: m_i \in \mathbb{Z} \right>,
$$which is equal to the $\Z[\frac{1}{S}]$-points in $\mathbb{Q} \rtimes \mathbb{Q}^*$. Hence, this computation verifies Theorem \ref{theorem1.4_baues_grunewald} for this specific example. 

Suppose that $n = \delta \prod_{i=1}^k p_i^{\ell_i}$ is the prime decomposition of $n$ where $\delta = \pm 1$. We claim that $$
\Inn(\Gamma) = \left<C^{-n+1} , T^{\epsilon} \prod_{i=1}^k Q_i^{\ell_i} \right>
$$
where $\epsilon =0$ if $\delta = 1$ and $\epsilon = 1$ if $\delta = -1$. We see that 
$$
x^{-1} t x =tt^{-1} x^{-1}tx  = tx^{-n}x  = tx^{-n+1} = C^{-n+1}(t)
$$
Since $C^{-n+1}(x) = x$, we see that $C^{n-1}$ is the inner automorphism associated to the element $x$. Given that $T^\epsilon \prod_{i=1}^k Q^{\ell_i}_i$ is the inner automorphism associated to $t$, our claim then follows. Therefore, we have 
$$
\Out(\Gamma) = \left< C, T, Q_1, \ldots Q_k \: \left| \:  \begin{array}{c} T^2 = 1, TCT = C^{-1},  TQ_i = Q_iT, 1 \leq i \leq k \\ Q_i^{-1} C Q_i = C^{p_i}, Q_i Q_j = Q_j Q_i, 1 \leq i < j \leq k \\  C^{n-1} =1 , T^{\epsilon} \prod_{i=1}^k Q_i^{\ell_i} = 1 \end{array}  \right. \right>.
$$
We note that the subgroup $\left<C\right>$ is normal in $\Out(\Gamma)$. As $\Out(\Gamma) / \left< C \right>$ is clearly virtually a free abelian group of rank $k-1$, we find that also $\Out(\Gamma)$ is virtually a free abelian group of rank $k-1$. 

When $n$ has a single prime factor, the group $\Out(\Gamma)$ is finite and thus $S$-arithmetic. However, when $|n|$ has $2$ or more prime factors $k$, $\Out(\Gamma)$ is a finite extension of $\mathbb{Z}^{k-1}$ and it is clear that $\Out(\Gamma)$ is not $S$-arithmetic.
\end{proof}

Next, we give examples of finitely generated, torsion-free FAR-groups $\Gamma$ without $S$-arithmetic Fitting subgroup such that the conclusion of Theorem \ref{theorem1.4_baues_grunewald} does not hold. Now if $p$ and $q$ distinct odd primes, we then see that the spectrum of $\Gamma = \text{BS}(1,p) \times \text{BS}(1,q)$ is given by $S= \{p, q\}$ where $\Fitt(\text{BS}(1,p) \times \text{BS}(1,q)) = \mathbb{Z}[\frac{1}{p}] \times \Z[\frac{1}{q}]$. We will now demonstrate that $\text{BS}(1, p) \times \text{BS}(1, q)$ does not satisfy the conclusions of Theorem \ref{theorem1.4_baues_grunewald}. To proceed, we prove a number of some preliminary results. 
\begin{lemma}\label{theorem1.4_baues_grunewald_nec_lemma1}
	Let $p$ and $q$ be distinct primes. Then 
	$$
	\Aut\left( \mathbb{Z} \left[\frac{1}{p} \right] \times \mathbb{Z} \left[ \frac{1}{q} \right]  \right) \cong \Aut\left( \mathbb{Z} \left[\frac{1}{p} \right] \right) \times \Aut \left(\mathbb{Z} \left[ \frac{1}{q} \right]  \right).
	$$
\end{lemma}
\begin{proof}
	Letting $A = \mathbb{Z}[\frac{1}{p}]$ and $B = \mathbb{Z}[ \frac{1}{q}],$ we note that $\Aut(\mathbb{Z}[\frac{1}{p}] \times \mathbb{Z}[ \frac{1}{q}])$ is given by invertible $2 \times 2$-matrices
	$$
	\begin{bmatrix}
		f & g \\
		h & k
	\end{bmatrix} \text{ where } f \in \text{Hom}_{\mathbb{Z}}(A,A), k \in \text{Hom}_{\mathbb{Z}}(B,B), g \in \text{Hom}_{\mathbb{Z}}(B,A), \text{ and } h \in \text{Hom}_{\mathbb{Z}}(A,B).
	$$
Let $f \colon A \to B$ be a group homomorphism. Since $A$ and $B$ are both additive subgroups of $\mathbb{Q}$, this is equivalent to multiplication by a rational number $r$ such that $r \cdot A \subseteq B$. As $f(1) \in B$, we can write $r = \frac{b}{q^m}$ with $b, m \in \Z$. However, as
	$$
	f\left(\frac{a}{p^k}\right) =  \frac{a b }{p^k q^m } \in \mathbb{Z}\left[ \frac{1}{q}\right]
	$$
	for all $a, k,\in \Z$, we must have that $b =0$. Hence, $\text{Hom}_{\mathbb{Z}}(A,B) = \{0\}$, and similarly, $\text{Hom}_{\mathbb{Z}}(B,A) = \{0\}$. Therefore,
 	$$
	\Aut\left(\mathbb{Z}\left[\frac{1}{p}\right]\right) \times \Aut\left(\mathbb{Z}\left[ \frac{1}{q}\right]\right).
	$$
\end{proof}
Using the previous lemma, we now demonstrate that $\Aut(\text{BS}(1,p) \times \text{BS}(1,q)) \cong \Aut(\text{BS}(1,p)) \times \Aut(\text{BS}(1,q))$.
\begin{prop}\label{prop_direct_product_auto}
	Let $p, q$ be distinct primes. Then
	$$
	\Aut(\text{BS}(1,p) \times \text{BS}(1,q)) \cong \Aut(\text{BS}(1,p)) \times \Aut(\text{BS}(1,q))
	$$
\end{prop}
\begin{proof}
	It is easy to see that 
	$$
	\Aut(\text{BS}(1,p)) \times \Aut(\text{BS}(1,q))\leq \Aut(\text{BS}(1,p) \times \text{BS}(1,q)).
	$$
	Thus, to show our proposition, we need to demonstrate that any element of $\Aut(\text{BS}(1,p) \times \text{BS}(1,q))$ stabilizes the factors in $\text{BS}(1,p) \times \text{BS}(1,q)$. Note that both $N_1 = \Fitt(\text{BS}(1,p))$ and $N_2 = \Fitt(\text{BS}(1,q))$ are characteristic in $\text{BS}(1,p) \times \text{BS}(1,q)$ by the previous lemma. As the center of $\text{BS}(1,p) \times \text{BS}(1,q)/N_1$ is equal to $\text{BS}(1,p)/N_1$ and similarly for $N_2$, the proposition follows easily.
\end{proof}

The next theorem justifies the assumption of $S$-arithmeticity of $\Fitt(\Gamma)$ in the statement of Theorem \ref{theorem1.4_baues_grunewald} where $\Gamma$ is a finitely generated virtualy torsion-free FAR with spectrum $\{p,q\}$.
\begin{thm}
	\label{thm_nonarith}
Let $p$ and $ q$ be distinct primes and denote $S = \{p, q\}$. Let $\Gamma =\text{BS}(1,p) \times \text{BS}(1, q))$. Then $\Gamma$ satisfies the following:
\begin{enumerate}[(1)]
\item The spectrum of $\Gamma$ is $S$;
\item The group $\Fitt(\Gamma)$ is not unipotently $S$-arithmetic;
\item The group $A_{\Gamma \: | \: \Fitt(\Gamma)}$ is not $S$-arithmetic.
\end{enumerate}
\end{thm}
\begin{proof}
Since $\Aut(\text{BS}(1, n)) \cong A_{\text{BS}(1,n) \: | \: \Fitt(\text{BS}(1,n))}$, Proposition \ref{prop_direct_product_auto} implies that 
$$
A_{\Gamma \: | \: \Fitt(\Gamma)} \cong A_{\text{BS}(1,p) \: | \: \Fitt(\text{BS}(1, p))} \times A_{\text{BS}(1,q) \: | \: \Fitt(\text{BS}(1, q))}
$$
where $\Gamma = \text{BS}(1,p) \times \text{BS}(1,q)$. It is easy to see that $\Fitt(\text{BS}(1,p) \times \text{BS}(1,q))$ is not $\{p, q\}$-arithmetic in $\mathbb{Q}$-defined unipotent algebraic group. We claim that $A_{\Gamma \: | \:  \Fitt(\Gamma)}$ is not $S$-arithmetic, and for a contradiction, suppose otherwise. Thus, there exists a $\mathbb{Q}$-defined  linear algebraic group $\textbf{G}$ so that $A_{\Gamma \: | \: \Fitt(\Gamma)}$ is commensurable with $\textbf{G}(\mathbb{Z}[\frac{1}{S}])$. By Theorem \ref{theorem1.4_baues_grunewald}, there exists $\mathbb{Q}$-defined linear algebraic groups $\textbf{G}_1$ and $\textbf{G}_2$ such that $A_{\text{BS}(1,p) \: | \: \Fitt(\text{BS}(1,p))}$ is commensurable with  $\textbf{G}_1(\mathbb{Z}[\frac{1}{p}])$ and $A_{\text{BS}(1,q) \: | \: \Fitt(\text{BS}(1,q))}$ is commensurable with $\textbf{G}(\mathbb{Z}[ \frac{1}{q}])$. We then have that $A_{\Gamma \: | \: \Fitt(\Gamma)} $ is commensurable with the group  $\textbf{G}_{1}(\mathbb{Z}[\frac{1}{p}]) \times \textbf{G}_{2}(\mathbb{Z}[ \frac{1}{q}])$. Hence, the group $\textbf{G}_{1}(\mathbb{Z}[\frac{1}{p}]) \times \textbf{G}_{2}(\mathbb{Z}[ \frac{1}{q}])$ is $S$-arithmetic. Using the natural projection $\pi \colon \textbf{G}_{1} \times \textbf{G}_{2} \to \textbf{G}_{1}$ and properties of $S$-arithmetic groups, we have 
$$
\pi \left(\textbf{G}_{1}\left(\mathbb{Z}\left[\frac{1}{p}\right] \right) \times \textbf{G}_{2}\left(\mathbb{Z}\left[ \frac{1}{q}\right] \right) \right) = \textbf{G}_{1}\left (\mathbb{Z}\left[\frac{1}{p}\right]\right)
$$ 
is $S$-arithmetic, which is a contradiction. 
\end{proof}

The above examples can be generalized to any direct product of the form $\text{BS}(1,n_1) \times \text{BS}(1, n_2)$ where $n_1$ and $n_2$ are two natural numbers greater than $1$ where $n_1$ and $n_2$ have distinct prime factors.

To end this section, we present some insights in the natural map $\pi_\Gamma: \Out(\Gamma) \to \Out_a(\textbf{H}_\Gamma)$, induced by the quotient map $\Aut(\textbf{H}_\Gamma) \to \Out_a(\textbf{H}_\Gamma)$. On the one hand, we investigate the structure of the kernel of $\pi_\Gamma$, on the other hand, we show that the image $\pi_\Gamma(\Out(\Gamma)) \subset \Out_a(\textbf{H}_\Gamma)$ is $S$-arithmetic, where $S$ is the spectrum of $\Gamma$. Both of these statements are found in Theorem \ref{out(G)_arithmetic} which we restate for the reader's convenience.

\begin{thm}
Let $\Gamma$ be a finitely generated virtually FAR WTN-group with spectrum $S$, and suppose that $\Fitt(\Gamma)$ is unipotently $S$-arithmetic. Then
\begin{enumerate}[(1)]
\item the image $\pi_\Gamma(\Out(\Gamma))$ is a $S$-arithmetic group;
\item the kernel $\ker(\pi_\Gamma)$ has finite Hirsch length, is virtually abelian, and is centralized by a finite index subgroup of $\Out(\Gamma)$;
\item if $G$ is moreover virtually nilpotent, then the kernel of $\pi_\Gamma$ is finite.
\end{enumerate}
\end{thm}
\begin{proof}
In this proof, we denote by
	\begin{align*}
	H_F = \Inn_{\textbf{H}_\Gamma} \cap A_{\Gamma  |  \Fitt(\Gamma)} \supset E_F = \Inn_{\textbf{F}}^{\textbf{H}_\Gamma} \cap A_{\Gamma  |  \Fitt(\Gamma)}.
	\end{align*}
We first demonstrate the following facts about $H_F$ and $E_F$.
\begin{enumerate}[(1)]
		\item The subgroup $\Inn_{\Fitt(\Gamma)}^\Gamma$ has finite index in $E_F$.
		\item The commutator group $[A_{\Gamma \: | \: \Fitt(\Gamma)}, H_F]$ is a subgroup of $E_F$.
	\end{enumerate}
To prove this, note that $\Inn_{\Fitt(\Gamma)}^{\textbf{H}_\Gamma}$ is $S$-arithmetic in $\Inn_{\textbf{F}}^{\textbf{H}_\Gamma}$. As $A_{\Gamma  |  \Fitt(\Gamma)}$ is $S$-arithmetic in $\textbf{A}_{\textbf{H}_\Gamma \: | \: \textbf{F}}$, we must have that $E_F$ is $S$-arithmetic in $\Inn_{\textbf{F}}^{\textbf{H}_\Gamma}$. Since $\Inn_{\Fitt(\Gamma)}^\Gamma \leq E_F$ and $\Inn_{\Fitt(\Gamma)}^\Gamma$ is $S$-arithmetic in $\Inn_{\textbf{F}}^{\textbf{H}_\Gamma}$, we have that $\Inn_{\Fitt(\Gamma)}^\Gamma$ is finite index in $E_F.$ Therefore, we have our first statement.

For the second, let $\varphi \in A_{\Gamma | \Fitt(\Gamma)}$ with $\Psi$ the extension to the $\Q$-algebraic hull $\textbf{H}_\Gamma$ and $h \in \textbf{H}_\Gamma$ with $\Inn_h \in H_F$. Since $\Psi \in \textbf{A}_{\textbf{H}_\Gamma | \textbf{F}}$, we may write
	$$
	\Psi \circ \Inn_h \circ \Psi^{-1} = \Inn_{\Psi(h)} = \Inn_h \circ \Inn_k 
	$$
	where $k \in \textbf{F}$. Hence,
	\begin{eqnarray*}
		\Inn_{h^{-1}} \circ \Psi \circ \Inn_h \circ \Psi^{-1} &=&\Inn_{h^{-1}}  \circ \Inn_h \circ \Inn_k\\
		&=& \Inn_k \in \Inn_{\textbf{F}}^{\textbf{H}_\Gamma} \cap A_{\Gamma | \Fitt(\Gamma)} =E_F. 
	\end{eqnarray*}
	
	Let $\bar{H}_F, \bar{E}_F$ and $\overline{A_{\Gamma  | \Fitt(\Gamma)}}$ be the images of $H_F, E_F$ and $A_{\Gamma  | \Fitt(\Gamma)}$ in $\Out(\Gamma)$, where the latter is a finite index subgroup by Theorem \ref{theorem1.4_baues_grunewald}. Hence, since Theorem \ref{theorem1.4_baues_grunewald} implies $A_{\Gamma \: | \: \Fitt(\Gamma)}$ is $S$-arithmetic in $\textbf{A}_{\textbf{H}_\Gamma \: | \: \textbf{F}}$,  \textbf{AR1} implies that $\pi_\Gamma(\overline{A_{\Gamma \: | \: \Fitt(\Gamma)}})$ is a $S$-arithmetic lattice in $\pi_\Gamma(\textbf{A}_{\textbf{H}_\Gamma \: | \: \textbf{F}})$. Therefore, $\pi_\Gamma(\Out(\Gamma))$ is $S$-arithmetic, leading to the first statement of the theorem.
	
	 By definition, $\ker(\pi_\Gamma) = \bar{H}_F$.  
	Fact (1) implies $\bar{E}_F$ is finite, whereas fact (2) implies $[\overline{A_{\Gamma  |  \Fitt(\Gamma)}}, \bar{H}_F] \leq \bar{E}_F$ and thus the former is finite.  In particular, a finite index subgroup of $\overline{A_{\Gamma  |  \Fitt(\Gamma)}}$ and thus of $\Out(\Gamma)$ centralizes $\bar{H}_F = \ker(\pi_\Gamma)$. 
As we also know that 
	$$
	h(\bar{H}_K) = \dim(\overline{H}_K) \leq \dim(\Aut(\textbf{H}_\Gamma)).
	$$
 it indeed has finite Hirsch length. When $\Gamma$ is virtually nilpotent, then it is clear that $E_F$ has finite index in $H_F$ and hence $\bar{H}_F$ is finite.
\end{proof}

\section{Further applications of the $\Q$-algebraic hull}\label{section_futher_applications}
%

%

In this section, we apply the $\mathbb{Q}$-algebraic hull to generalize the results from \cite{dere1}. One subsection deals with the existence of strongly scale-invariant monomorphisms as introduced in \cite{pete_nekrashevych}, the other subsection with the existence of injective morphisms with a well-defined Reidemeister zeta function \cite{felshtyn}. Notations and background are given in the separate sections.

\subsection{Strongly scale-invariant solvable groups}\label{strongly_scale_invariant}
We start with the following definition.
\begin{defn}
	Let $\Gamma$ be any group with an endomorphism $\varphi \colon \Gamma \to \Gamma$. We say  monomorphism $\varphi \colon \Gamma \to \Gamma$ is called \emph{strongly scale-invariant} if $\varphi(\Gamma)$ has finite index in $\Gamma$ and $\bigcap_{n>0} \varphi^n(\Gamma)$ is finite. We say a group $G$ is \emph{strongly scale-invariant} if it admits a strongly scale-invariant monomorphism.
\end{defn}
Motivated by Gromov's work \cite{gromov} on expanding maps, Nekrashevych and Pete \cite{pete_nekrashevych} conjectured that the only finitely generated groups that admit a strongly scale-invariant endomorphisms are virtually nilpotent, which they demonstrated for the class of non-elementary torsion-free hyperbolic groups. The first named author then by using the $\mathbb{Q}$-algebraic hull demonstrated that all virtually polycyclic groups that admit a strongly scale-invariant endomorphism are virtually nilpotent. We extend these results to the class of finitely generated virtually FAR-groups. 

Given a group $G$ with a monomorphism $\varphi \colon G \to G$, we say that $H < G$ is $\varphi$-invariant if $\varphi(H) < H$. If $H$ is $\varphi$-invariant for all monomorphisms $\varphi \colon G \to G$, we say that $H$ is an \emph{injectively characteristic subgroup} of $G$. We observe that most injectively characteristic subgroups we will be considering will be in fact fully characteristic, meaning that it is invariant under all endomorphisms of the group $G$. It is easy to see that all injectively characteristic subgroups are normal since they must be invariant under conjugation.

If $\Gamma$ is a strongly scale-invariant group equipped with a strongly scale-invariant monomorphism $\varphi \colon \Gamma \to \Gamma$, then the finite residual $\Fr(\Gamma)$ satisfies
$$
\Fr(\Gamma) \leq \bigcap_{n > 0} \varphi^n(\Gamma).
$$
Thus, if a group $\Gamma$ has an infinite finite residual, then $\Gamma$ cannot be strongly scale-invariant. Therefore, all groups in this section are assumed to have a finite residual that is finite.

We now proceed to give basic properties of strongly scale-invariant monomorphisms, as given in \cite{dere1}. The first lemma says that a monomorphism $\varphi \colon \Gamma \to \Gamma$ is strongly scale-invariant only if $\varphi^n \colon \Gamma \to \Gamma$ is strongly scale-invariant for any $n \in \mathbb{N}$.
\begin{lemma}
	\label{sscipower}
	Let $\varphi \colon \Gamma \to \Gamma$ be a monomorphism, and let $n>0$ be a natural number. We then have that $\varphi$ is strongly scale-invariant if and only if $\varphi^n \colon \Gamma \to \Gamma$ is strongly scale-invariant.
\end{lemma}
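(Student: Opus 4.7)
The plan is to check both defining conditions of strongly scale-invariant separately, using only elementary properties of iterated monomorphisms. The lemma is essentially a warm-up statement, so no serious obstacle is expected; the point is that both conditions are stable under taking positive powers in either direction.

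First I would observe that the chain $\Gamma \supseteq \varphi(\Gamma) \supseteq \varphi^2(\Gamma) \supseteq \cdots$ is descending, since applying $\varphi$ to the inclusion $\varphi^k(\Gamma) \subseteq \Gamma$ yields $\varphi^{k+1}(\Gamma) \subseteq \varphi^k(\Gamma)$. For any descending chain, intersecting over a cofinal subsequence gives the same result as intersecting over the whole sequence, so
$$\bigcap_{k>0} \varphi^k(\Gamma) \;=\; \bigcap_{k>0} \varphi^{nk}(\Gamma) \;=\; \bigcap_{k>0} (\varphi^n)^k(\Gamma).$$
This single identity handles the finiteness-of-the-intersection condition in both directions simultaneously.

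For the finite-index condition, in the forward direction I would exploit that $\varphi$ is injective and hence induces isomorphisms $\Gamma/\varphi(\Gamma) \cong \varphi^i(\Gamma)/\varphi^{i+1}(\Gamma)$ for each $i\geq 0$, which then telescope via the tower law to
$$[\Gamma : \varphi^n(\Gamma)] \;=\; \prod_{i=0}^{n-1}[\varphi^i(\Gamma):\varphi^{i+1}(\Gamma)] \;=\; [\Gamma:\varphi(\Gamma)]^n,$$
so finiteness of $[\Gamma:\varphi(\Gamma)]$ yields finiteness of $[\Gamma:\varphi^n(\Gamma)]$. Conversely, the inclusion $\varphi^n(\Gamma) \subseteq \varphi(\Gamma) \subseteq \Gamma$ immediately gives $[\Gamma:\varphi(\Gamma)] \leq [\Gamma:\varphi^n(\Gamma)]$, so finiteness of the latter forces finiteness of the former.

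Combining the two observations, $\varphi$ satisfies both defining conditions of strongly scale-invariance if and only if $\varphi^n$ does. The only subtlety worth stating explicitly is that injectivity of $\varphi$ is used to equate the successive quotient orders in the tower, but this is built into the hypothesis that $\varphi$ is a monomorphism, so no additional input is required.
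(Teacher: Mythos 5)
Your proof is correct, and it is the standard argument; the paper itself does not spell out a proof of this lemma but simply quotes it from the earlier reference, and what you write is exactly the expected verification. One cosmetic remark: since $\varphi(\Gamma)$ need not be normal in $\Gamma$, the "isomorphisms" $\Gamma/\varphi(\Gamma)\cong\varphi^i(\Gamma)/\varphi^{i+1}(\Gamma)$ should be read as bijections of coset spaces induced by the injective map $\varphi^i$, which is all the multiplicativity-of-index computation needs.
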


The following lemma shows that a monomorphism $\varphi \colon \Gamma \to \Gamma$ being strongly scale-invariant holds for the restriction of $\varphi$ to any $\varphi$-invariant subgroup.
\begin{lemma}\label{stable_subgroup_scale_invariant}
	Let $\varphi \colon \Gamma \to \Gamma$ be a strongly scale-invariant monomorphism, and let $\Gamma^\prime$ be a $\varphi$-invariant subgroup. Then $\restr{\varphi}{\Gamma^\prime} \colon \Gamma^\prime \to \Gamma^\prime$ is strongly scale-invariant.
\end{lemma}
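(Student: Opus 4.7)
The plan is to verify directly the three defining properties of a strongly scale-invariant monomorphism for the restriction $\restr{\varphi}{\Gamma'}$: injectivity, the image having finite index, and the intersection of forward iterates being finite.

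Injectivity of $\restr{\varphi}{\Gamma'}$ is immediate, since $\varphi$ itself is injective. The finiteness of $\bigcap_{n>0} \varphi^n(\Gamma')$ is also straightforward from the containment
\[
\bigcap_{n>0} \varphi^n(\Gamma') \subseteq \bigcap_{n>0} \varphi^n(\Gamma),
\]
as the right-hand side is finite by hypothesis on $\varphi$ being strongly scale-invariant on $\Gamma$.

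The only substantive step is showing that $\varphi(\Gamma')$ has finite index in $\Gamma'$. For this, I would appeal to Proposition \ref{injfi}: in the context of this section the ambient group $\Gamma$ is a virtually FAR-group, and subgroups of virtually FAR-groups are themselves virtually FAR-groups. Thus $\Gamma'$ is a virtually FAR-group and the injective endomorphism $\restr{\varphi}{\Gamma'} \colon \Gamma' \to \Gamma'$ (well-defined by $\varphi$-invariance of $\Gamma'$) has image of finite index in $\Gamma'$ by that proposition. If one prefers to avoid the implicit FAR assumption on $\Gamma$, the statement still follows whenever a corresponding co-Hopf-type property is known for $\Gamma'$. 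The main (in fact, only) obstacle worth noting is to ensure the virtually FAR property transfers to the subgroup, which is standard and poses no difficulty.

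Putting these three ingredients together immediately yields that $\restr{\varphi}{\Gamma'}$ satisfies all the requirements of Definition of strong scale-invariance, completing the proof.
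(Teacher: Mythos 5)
Your argument is correct. The paper itself states this lemma without proof, citing \cite{dere1}, so there is no internal proof to compare against; but your three-step verification is exactly what is needed, and the only condition requiring real work is the finite-index one, which you correctly reduce to Proposition \ref{injfi}. Two remarks. First, as you note, the lemma as literally stated has no hypothesis on $\Gamma$, and the finite-index step genuinely needs one: for an arbitrary group a $\varphi$-invariant subgroup $\Gamma'$ need not satisfy $[\Gamma':\varphi(\Gamma')]<\infty$ merely because $[\Gamma:\varphi(\Gamma)]<\infty$ (one only gets $[\Gamma':\Gamma'\cap\varphi(\Gamma)]<\infty$, and the remaining index $[\Gamma'\cap\varphi(\Gamma):\varphi(\Gamma')]=[\varphi^{-1}(\Gamma'):\Gamma']$ can a priori be infinite). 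In the paper's usage the lemma is applied to subgroups such as $\Fitt(\Gamma)$, which are not of finite index, inside a virtually FAR ambient group, so invoking the heredity of the virtually FAR property to subgroups together with Proposition \ref{injfi} is the right move and, in fact, shows that for virtually FAR groups the finite-index requirement in the definition is automatic for any monomorphism. Second, in the virtually polycyclic setting of \cite{dere1} the same step is usually handled by a Hirsch-length comparison (equal Hirsch length of a subgroup forces finite index for polycyclic groups); that shortcut fails for FAR groups (e.g.\ $\Z\leq\Z[\tfrac{1}{2}]$), so your route through Proposition \ref{injfi} is the appropriate generalization rather than a cosmetic variation.
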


Finally, in the special case of nilpotent groups of finite rank, the following holds.

\begin{lemma}
	\label{sscinilpotent}
	Let $N$ be a Zariski-dense subgroup of a unipotent group $\textbf{U}$ and $\varphi: N \to N$ an endomorphism that is strongly scale-invariant, with unique extension $\phi$ to $\textbf{U}(\Q)$. For every $x \in \textbf{U}(\Q)$, there exists a unique $y$ in $\textbf{U}(\Q)$ with $x = y \phi(y)^{-1}$.
\end{lemma}

In this section, we apply the construction of the $\Q$-algebraic hull to characterize what finitely generated virtually FAR-groups are strongly scale-invariant.

\begin{thm}\label{torsion_free_by_free_abelian_ssin}
	Let $\Gamma$ be a finitely generated virtually FAR-group. If $\Gamma$ is strongly scale-invariant, then $\Gamma$ is virtually nilpotent. 
\end{thm}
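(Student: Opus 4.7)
The plan is to extend the strongly scale-invariant monomorphism $\varphi$ to an algebraic automorphism $\Phi$ of the $\Q$-algebraic hull $\textbf{H}_\Gamma$ and to show that the Levi complement of $\textbf{H}_\Gamma$ must be trivial, which forces $\Gamma$ to be virtually nilpotent via Proposition \ref{unipotent_hull_fitt_alg_hull}. The strategy mirrors the polycyclic proof of \cite{dere1}, suitably adapted to the FAR setting by replacing integer points with $\Z[\tfrac{1}{S}]$-points.

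First I would reduce to the WTN case. Since $\Fr(\Gamma) \leq \bigcap_{n>0} \varphi^n(\Gamma)$ is finite, Lemma \ref{lem:equivalencesRF} yields that $\tau(\Gamma)$ is finite; being fully characteristic, it gives rise to a strongly scale-invariant monomorphism on $\Gamma/\tau(\Gamma)$ whose virtual nilpotence lifts back to $\Gamma$. Theorem \ref{algebraic_hull_thm} then provides $\textbf{H}_\Gamma$ and Corollary \ref{injectiveextension} the algebraic extension $\Phi$. Passing to $\Gamma \cap \textbf{H}_\Gamma^\circ$ together with a suitable power of $\varphi$ that stabilizes it (using Lemmas \ref{sscipower}, \ref{stable_subgroup_scale_invariant}, and \ref{fihull}), I may assume $\textbf{H}_\Gamma$ is connected. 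Writing $\textbf{H}_\Gamma = \textbf{U}(\textbf{H}_\Gamma) \rtimes \textbf{T}$ with $\textbf{T}$ a maximal torus via \textbf{SG3}, together with \textbf{SG4} and a fixed-point argument on the variety of maximal tori, I select $\textbf{T}$ so that $\Phi(\textbf{T}) = \textbf{T}$. Rigidity of tori (\textbf{AG3}) then forces $\Phi|_\textbf{T}$ to have finite order, so after one more application of Lemma \ref{sscipower} I may assume $\Phi|_\textbf{T} = \mathrm{id}_\textbf{T}$.

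The claim is that $\textbf{T}$ is now trivial. By Proposition \ref{unipotent_hull_fitt_alg_hull} the Levi quotient $\textbf{H}_\Gamma / \textbf{U}(\textbf{H}_\Gamma) \cong \textbf{T}$ contains $\Gamma/\Fitt(\Gamma)$ as a subgroup, so $\Phi|_\textbf{T} = \mathrm{id}$ translates into $\varphi(x) = x \cdot f_1(x)$ with $f_1(x) \in \Fitt(\Gamma)$, and iteration produces $\varphi^n(x) = x \cdot f_n(x)$ for a cocycle $f_n(x) \in \Fitt(\Gamma)$. If $\textbf{T}$ were non-trivial then $\Gamma/\Fitt(\Gamma)$ would be infinite, and for each $x \in \Gamma \setminus \Fitt(\Gamma)$ the condition $x \in \varphi^n(\Gamma)$ would reduce to solving $\varphi^n(g_n) = f_n(x)^{-1}$ for some $g_n \in \Fitt(\Gamma)$ (then $y_n = x g_n$ is a preimage). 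Since $\varphi|_{\Fitt(\Gamma)}$ is strongly scale-invariant by Lemma \ref{stable_subgroup_scale_invariant} and Zariski-dense in $\textbf{U}(\textbf{H}_\Gamma)$, Lemma \ref{sscinilpotent} yields a unique solution $g_n \in \textbf{U}(\textbf{H}_\Gamma)$. If this $g_n$ lies in $\Fitt(\Gamma)$, then distinct cosets of $\Fitt(\Gamma)$ in $\Gamma$ produce distinct elements of $\bigcap_{n>0} \varphi^n(\Gamma)$, contradicting strong scale invariance and forcing $\Gamma/\Fitt(\Gamma)$ to be finite.

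The main obstacle is precisely the last step: verifying that the unique solution $g_n$ produced by Lemma \ref{sscinilpotent} actually lies in the discrete subgroup $\Fitt(\Gamma)$ rather than escaping into $\textbf{U}(\textbf{H}_\Gamma)(\Q)$. I would attempt this by induction on $n$, with the base case using the finite index of $\varphi(\Fitt(\Gamma))$ in $\Fitt(\Gamma)$ via Proposition \ref{injfi} together with the integrality $\iota(\Gamma) \leq \textbf{H}_\Gamma(\Z[\tfrac{1}{S}])$ from Definition \ref{def:algebraichull}, and the inductive step exploiting the telescoping recursion $f_{n+1}(x) = f_n(x) \cdot \varphi^n(f_1(x))$. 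A cleaner alternative is to argue via Jordan decomposition in $\textbf{H}_\Gamma$: once $\Phi|_\textbf{T} = \mathrm{id}$, the semisimple part $x_s$ of every $x \in \Gamma$ satisfies $\Phi(x_s) = x_s$, so showing that an appropriate power of $x$ has its semisimple part in $\iota(\Gamma)$ would directly produce the infinite subset of $\bigcap_{n>0} \varphi^n(\Gamma)$ needed for the contradiction.
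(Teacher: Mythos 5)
Your reduction to the \emph{WTN} case, the passage to a connected hull via a finite index subgroup, and the use of rigidity of tori to arrange that a power of $\varphi$ induces the identity on $\Gamma/\Fitt(\Gamma)$ all match the paper's proof. The gap is in the final contradiction. You try to show that a coset representative $x$ itself lies in $\varphi^n(\Gamma)$ for every $n$, which, as you compute, amounts to solving $\varphi^n(g_n) = f_n(x)^{-1}$ with $g_n \in \Fitt(\Gamma)$, i.e.\ to the containment $f_n(x) \in \varphi^n(\Fitt(\Gamma))$. This is false in general and cannot be repaired: by Lemma \ref{stable_subgroup_scale_invariant} the restriction of $\varphi$ to $\Fitt(\Gamma)$ is itself strongly scale-invariant, so the subgroups $\varphi^n(\Fitt(\Gamma))$ shrink to a finite intersection, and there is no mechanism forcing the specific elements $f_n(x)$ to track this shrinking; the telescoping recursion $f_{n+1}(x) = f_n(x)\varphi^n(f_1(x))$ only writes $f_n(x)$ as a product of images of $f_1(x)$ under $\varphi^0,\dots,\varphi^{n-1}$, which does not land in $\varphi^n(\Fitt(\Gamma))$. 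Note also that Lemma \ref{sscinilpotent} is not about solving $\varphi^n(g) = c$ --- that equation has a unique solution in $\textbf{U}(\textbf{H}_\Gamma)$ simply because the extension of $\varphi^n$ is an automorphism of the unipotent radical, and the whole issue is whether that solution is in $\Fitt(\Gamma)$. The lemma is about the \emph{twisted} map $y \mapsto y\phi(y)^{-1}$.

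The correct use of Lemma \ref{sscinilpotent}, and what the paper does, is the twisted one: writing $\varphi(x) = xy$ with $y \in \Fitt(\Gamma)$, solve $z\Psi(z)^{-1} = y$ with $z \in \overline{\Fitt(\Gamma)}$ --- crucially, $z$ is only required to lie in the Mal'tsev completion, not in $\Fitt(\Gamma)$. Then $\Psi(xz) = xz$, so one has corrected $x$ by a unipotent element into a genuinely $\Psi$-fixed point; since $z \in (\Fitt(\Gamma))^{\frac{1}{m}}$ for some $m$ and $\Gamma$ has finite index in the corresponding thickening, a suitable power of $xz$ lies in $\Gamma$, is fixed by $\varphi$, and hence generates an infinite subgroup of $\bigcap_{n>0}\varphi^n(\Gamma)$. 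Your Jordan-decomposition alternative suffers from the same defect: $\Phi$ being the identity on $\textbf{H}_\Gamma/\textbf{U}(\textbf{H}_\Gamma)$ only yields $\Phi(x_s) \in x_s\textbf{U}(\textbf{H}_\Gamma)$, not $\Phi(x_s) = x_s$, and semisimple parts of elements of $\Gamma$ (or of their powers) need not lie in $\iota(\Gamma)$. So the obstacle you flag is real, and the route around it is to abandon the attempt to keep the correction term inside $\Fitt(\Gamma)$.
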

\begin{proof}
As mentioned before, we know that $\Fr(\Gamma)$ is finite for a strongly scale-invariant group $\Gamma$, and thus, by Lemma \ref{lem:equivalencesRF}, we know that $\tau(\Gamma)$ is a finite group. By \cite[Proposition 2.7.]{cornulier}, we know that every injective morphism $\varphi \colon \Gamma \to \Gamma$ preserves $\tau(\Gamma)$, meaning that $\varphi(\tau(\Gamma)) \subset \tau(\Gamma)$. As $\tau(\Gamma)$ is finite, this means that $\varphi(\tau(\Gamma)) = \tau(\Gamma)$ and thus that $\varphi$ induces an injective morphism on the quotient $\Gamma / \tau(\Gamma)$. As it suffices to show that the latter group is virtually nilpotent, we may assume that $\tau(\Gamma) = 1$.
	
	In this case, $\Gamma$ has a $\Q$-algebraic hull $\textbf{H}_\Gamma$. By using Lemma \ref{stable_subgroup_scale_invariant} and replacing $\Gamma$ by a finite index subgroup if necessary, we can assume $\textbf{H}_\Gamma$ is connected. Take $\Psi$ the extension of $\varphi$ to $\textbf{H}_\Gamma$. The group $\textbf{H}_\Gamma/ \textbf{U}(\textbf{H}_\Gamma)$ is a torus, and so in particular, $\Psi$ has finite order on this quotient. By replacing $\varphi$ by a suitable power, which is possible by Lemma \ref{sscipower}, we can hence assume that $\Psi$ induces the identity on this quotient. As $\Gamma / \Fitt(\Gamma)$ is isomorphic to a subgroup of  $\textbf{H}_\Gamma/ \textbf{U}(\textbf{H}_\Gamma)$ by Proposition \ref{unipotent_hull_fitt_alg_hull}, we in particular have that $\varphi$ induces the identity on the quotient  $\Gamma / \Fitt(\Gamma)$.
	
	Now, assume that $\Gamma$ is not virtually nilpotent, or thus in particular that the group $\Gamma / \Fitt(\Gamma)$ contains an element of infinite order $x \Fitt(\Gamma)$. There exists $ y \in \Fitt(\Gamma)$ such that $\varphi(x) = x y$. As $\Fitt(\Gamma) = \Gamma \cap \textbf{U}(\textbf{H}_\Gamma)$, it is $\varphi$-invariant and thus, $\restr{\varphi}{\Fitt(\Gamma)}$ is also strongly scale-invariant by Lemma \ref{sscinilpotent}. Hence, Lemma \ref{sscinilpotent} implies that there exists a $z \in \textbf{F}(\Q) \subset \textbf{U}(\textbf{H}_\Gamma)$ such that $z \Psi(z)^{-1} = y$. A short computation shows that $$\Psi(xz) = \Psi(x) \Psi(z) = x y y^{-1} z = xz.$$ So by taking a suitable power of $xz$, we find an element $\gamma \in \Gamma$ of infinite order such that $\varphi(\gamma) = \gamma$, which is a contradiction. We conclude that $\Gamma$ is virtually nilpotent.
\end{proof}

\subsection{Reidemeister zeta functions}
\label{sec:Reid}

Given an endomorphism $\varphi: \Gamma \to \Gamma$ on a group $\Gamma$, we call elements $x, y \in \Gamma$ $\varphi$-twisted conjugate if there exists $z \in \Gamma$ such that $$ x = z y \varphi(z)^{-1}.$$ This is an equivalence relation generalizing the classical conjugacy relation, which is the relation corresponding to $\varphi = \text{id}_\Gamma$. The number of equivalence classes (possibly infinite) is denoted as $R(\varphi) \in \mathbb{N} \cup \left\{\infty \right\}$ and is called the Reidemeister number of $\varphi$. The Reidemeister number has important relations to topological fixed point theory, see \cite{felshtyn}, and hence, there is a lot of literature discussing methods to compute it and to determine whether it is infinite or not.  

Recall the following lemma about the Reidemeister number, see \cite[Lemma 1.1.]{dacibergwong}.

\begin{lemma}\label{daciberg_wong_lemma1.1}
Let $\varphi \colon \Gamma \to \Gamma$ be an endomorphism of a group $\Gamma$ with $\varphi$-invariant normal subgroup $N$ and quotient group $G = \Gamma / N$. Consider the following commutative diagram
\[\begin{tikzcd}
	1 & N & \Gamma & G & 1 \\
	1 & N & \Gamma & G & 1
	\arrow[from=1-1, to=1-2]
	\arrow[from=1-2, to=1-3]
	\arrow["{\varphi_N}"', from=1-2, to=2-2]
	\arrow[from=1-3, to=1-4]
	\arrow["\varphi"', from=1-3, to=2-3]
	\arrow[from=1-4, to=1-5]
	\arrow[from=2-1, to=2-2]
	\arrow[from=2-2, to=2-3]
	\arrow[from=2-3, to=2-4]
	\arrow["{\bar{\varphi}}", from=1-4, to=2-4]
	\arrow[from=2-4, to=2-5]
\end{tikzcd}\]
where $\bar{\varphi}$ and $\varphi_N$ are the induced and restricted endomorphism, respectively. The following statements about the Reidemeister number hold.
\begin{enumerate}[(1)]
\item If $R(\varphi) < \infty$, then $R(\bar{\varphi}) < \infty$ as well.
\item If $G$ is a finite group, then $R(\varphi) < \infty$ implies $R(\varphi_N) < \infty$.
\end{enumerate}
\end{lemma}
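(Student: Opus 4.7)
The plan is to unwind the Reidemeister relation through the short exact sequence $1 \to N \to \Gamma \to G \to 1$, in both directions.

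For part (1), I would observe that the quotient map $\pi : \Gamma \to G$ is compatible with twisted conjugation in the sense that if $x = z y \varphi(z)^{-1}$ in $\Gamma$, then applying $\pi$ yields $\pi(x) = \pi(z) \pi(y) \bar\varphi(\pi(z))^{-1}$. Thus $\pi$ descends to a well-defined map between the sets of Reidemeister classes, which is surjective because $\pi$ itself is. Consequently $R(\bar\varphi) \leq R(\varphi)$, giving the first claim.

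For part (2), the strategy is to show that each $\varphi$-twisted conjugacy class of $\Gamma$ meets $N$ in a union of boundedly many $\varphi_N$-classes, with the bound controlled by $|G|$. Fix a class $C \subset \Gamma$ with $C \cap N \neq \emptyset$, and pick $x \in C \cap N$. For a general element $y = g x \varphi(g)^{-1} \in C$, the condition $y \in N$ becomes $\bar g \, \bar\varphi(\bar g)^{-1} = 1$ in $G$, i.e.~$\bar g$ lies in the fixed-point set $\mathrm{Fix}(\bar\varphi) \subseteq G$. Since $G$ is finite, so is this fixed set; choosing lifts $g_1, \dots, g_r$ of its elements in $\Gamma$, a direct computation $(n g_i) x \varphi(n g_i)^{-1} = n \bigl( g_i x \varphi(g_i)^{-1}\bigr) \varphi(n)^{-1}$ exhibits $C \cap N$ as the union of the $\varphi_N$-classes of the $r$ elements $g_i x \varphi(g_i)^{-1}$, each of which already lies in $N$. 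Hence $C \cap N$ splits into at most $|G|$ many $\varphi_N$-classes. Since every $\varphi_N$-class in $N$ sits inside a unique $\varphi$-class of $\Gamma$, summing over the finitely many classes of $\varphi$ yields $R(\varphi_N) \leq |G| \cdot R(\varphi) < \infty$.

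The main point to get right is the bookkeeping in part (2): one must verify that each element of $C \cap N$ really is obtained from one of the lifts $g_i$, which reduces to noting that replacing a representative $g_i$ of $\bar g_i$ by $n g_i$ for $n \in N$ alters $g_i x \varphi(g_i)^{-1}$ only within its $\varphi_N$-class, exactly by the identity displayed above. Everything else is formal manipulation of the definitions, so I do not anticipate a serious obstacle beyond organizing this calculation cleanly.
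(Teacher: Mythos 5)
Your proof is correct. Note that the paper does not prove this lemma at all—it simply cites it as \cite[Lemma 1.1]{dacibergwong}—and your argument is precisely the standard one from that reference: part (1) via the induced surjection on Reidemeister classes, and part (2) via the observation that a $\varphi$-class $C$ meeting $N$ intersects $N$ in at most $|\mathrm{Fix}(\bar{\varphi})| \leq |G|$ many $\varphi_N$-classes, the key computation $(ng_i)x\varphi(ng_i)^{-1} = n\bigl(g_i x \varphi(g_i)^{-1}\bigr)\varphi_N(n)^{-1}$ being exactly right.
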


As $\varphi^n: \Gamma \to \Gamma$ is again an endomorphism of $\Gamma$, we can also consider the Reidemeister numbers $R(\varphi^n)$. Moreover, if all these numbers are finite, then we call $\varphi$ tame, in which case we can construct the Reidemeister zeta function $$R_\varphi(z) = \exp\left(\sum_{n=1}^\infty R(\varphi^n) z^n\right).$$ In order to study whether $R_\varphi(z)$ is rational, the paper \cite{dere1} shows that it often reduces the the class of groups that admit a tame monomorphism. For example, it is known that in the class of virtually polycyclic groups only the virtually nilpotent ones can admit a tame monomorphism. We generalize this result to the class of finitely generated virtually torsion-free FAR-groups.

\begin{thm}
Let $\Gamma$ be a residually finite and finitely generated virtually FAR-group with a tame monomorphism $\varphi: \Gamma \to \Gamma$. Then the group $\Gamma$ is virtually nilpotent. In particular, the Reidemeister zeta function $R_\varphi(z)$ of $\varphi$ is rational if the group $\Gamma$ is in addition torsion free.
	\end{thm}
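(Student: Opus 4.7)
The plan is to mimic the proof of Theorem \ref{torsion_free_by_free_abelian_ssin}, replacing the role of strong scale-invariance with tameness and using Lemma \ref{daciberg_wong_lemma1.1} as the transfer tool between $\Gamma$ and its relevant subquotients. First I would reduce to the case $\tau(\Gamma) = 1$. Residual finiteness forces $\tau(\Gamma)$ finite by Lemma \ref{lem:equivalencesRF}, and Cornulier's result cited earlier shows $\varphi$ preserves $\tau(\Gamma)$. By Lemma \ref{daciberg_wong_lemma1.1}$(1)$ the induced monomorphism $\bar{\varphi}$ on the quotient $\Gamma/\tau(\Gamma)$ satisfies $R(\bar{\varphi}^n) < \infty$ for all $n$, so it is still tame. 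Since $\Gamma/\tau(\Gamma)$ being virtually nilpotent implies the same for $\Gamma$, we may assume $\tau(\Gamma) = 1$ and hence that $\Gamma$ admits a $\Q$-algebraic hull $\textbf{H}_\Gamma$, to which $\varphi$ extends uniquely as an algebraic morphism $\Psi$ via Corollary \ref{injectiveextension}.

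Next I would pass to a finite index subgroup to force $\textbf{H}_\Gamma$ to be connected. Set $\Gamma_0 = \Gamma \cap \textbf{H}_\Gamma^\circ$, which is a finite index normal subgroup of $\Gamma$. Because $\Psi$ preserves the identity component, $\Gamma_0$ is $\varphi$-invariant, and Lemma \ref{daciberg_wong_lemma1.1}$(2)$ ensures tameness is preserved by restriction. By Lemma \ref{fihull} the $\Q$-algebraic hull of $\Gamma_0$ is the connected group $\textbf{H}_\Gamma^\circ$, so we may replace $(\Gamma, \textbf{H}_\Gamma)$ by $(\Gamma_0, \textbf{H}_\Gamma^\circ)$ and assume $\textbf{H}_\Gamma$ is connected. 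The reductive quotient $\textbf{H}_\Gamma/\textbf{U}(\textbf{H}_\Gamma)$ is then a torus, and rigidity of tori (\textbf{AG3}) implies the induced algebraic automorphism has finite order. Replacing $\varphi$ by a suitable power (still tame, since $R((\varphi^n)^m) = R(\varphi^{nm}) < \infty$), we may assume $\Psi$ acts trivially on $\textbf{H}_\Gamma/\textbf{U}(\textbf{H}_\Gamma)$.

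The contradiction then comes from comparing the Reidemeister number on the quotient with the size of that quotient. Proposition \ref{unipotent_hull_fitt_alg_hull} identifies $\Fitt(\Gamma) = \Gamma \cap \textbf{U}(\textbf{H}_\Gamma)$, so the abelian quotient $\Gamma / \Fitt(\Gamma)$ embeds into the torus $\textbf{H}_\Gamma/\textbf{U}(\textbf{H}_\Gamma)$ and the induced map $\bar{\varphi}$ on it is the identity. If $\Gamma/\Fitt(\Gamma)$ were infinite, then $R(\bar{\varphi}) = |\Gamma/\Fitt(\Gamma)| = \infty$, contradicting Lemma \ref{daciberg_wong_lemma1.1}$(1)$ applied to the pair $(\varphi, \Fitt(\Gamma))$. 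Hence $\Gamma/\Fitt(\Gamma)$ is finite and $\Gamma$ is virtually nilpotent. The final sentence about rationality of $R_\varphi(z)$ when $\Gamma$ is torsion-free virtually nilpotent follows from the corresponding results in \cite{dere1}.

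The main obstacle I expect is the bookkeeping in the second step: making sure that when we pass to the $\varphi$-invariant finite index subgroup $\Gamma_0$ we genuinely preserve tameness and that $\Psi|_{\textbf{H}_\Gamma^\circ}$ is still a well-defined algebraic bijection (needed so that iterating $\varphi|_{\Gamma_0}$ is controlled by iterating $\Psi$ on the torus). Once this is checked, the rest of the argument is a clean combination of the structural properties of the $\Q$-algebraic hull with the quotient behavior of Reidemeister numbers.
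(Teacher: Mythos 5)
Your proposal is correct and follows essentially the same route as the paper: reduce to $\tau(\Gamma)=1$ via Lemma \ref{lem:equivalencesRF} and Lemma \ref{daciberg_wong_lemma1.1}, pass to the hull, take a power of $\varphi$ inducing the identity on the abelian quotient $\Gamma/\Fitt(\Gamma)$, and derive $R(\varphi^n)=\infty$ from $R(\bar\varphi^n)=\infty$ unless that quotient is finite. You are in fact somewhat more explicit than the paper about why the reduction to a connected hull preserves tameness and why a suitable power of $\Psi$ is trivial on the torus quotient, which is a welcome clarification rather than a deviation.
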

	
\begin{proof}
By assumption, we know that $\Fr(\Gamma)$ is trivial or thus $\tau(\Gamma)$ is finite. Just as before, we know that $\varphi(\tau(\Gamma)) = \tau(\Gamma)$ and $\varphi$ induces a tame automorphism on the quotient $\Gamma / \tau(\Gamma)$ by Lemma \ref{daciberg_wong_lemma1.1}. Hence we can assume that $\tau(\Gamma)$ is trivial. So in particular, we know that $\Gamma$ has a $\Q$-algebraic hull. Similarly we can restrict to a normal subgroup of finite index such that $\Gamma / \Fitt(\Gamma)$ is torsion-free abelian.

We proceed by contradiction, and suppose that the group $\Gamma$ is not virtually nilpotent, and in particular $\Gamma / \Fitt(\Gamma)$ is non-trivial. We note that $\Fitt(\Gamma)$ is injectively characteristic since $\Fitt(\Gamma) = \textbf{U}(\textbf{H}_\Gamma) \cap \Gamma$. Now take a power $\varphi^n$ such that $\varphi^n$ induces the identity on $\Gamma / \Fitt(\Gamma)$. So the twisted conjugacy classes of $\bar{\varphi}^n$ are just conjugacy classes and thus $R(\bar{\varphi}^n) = \infty$. Again by \cite[Lemma 1.1.]{dacibergwong} we conclude that $R(\varphi^n) = \infty$ and thus that $\varphi$ is not tame.  

As rationality is known for torsion-free virtually nilpotent groups, we conclude that our theorem holds.
\end{proof}

\section*{Acknowledgements}

We are grateful to Karl Lorensen for pointing out an error in a previous version, but also giving a reference to fix it and shorten the paper.

\bibliographystyle{plain}
\bibliography{ref}

\begin{thebibliography}{10}

\bibitem{arapura_nori}
Donu Arapura and Madhav Nori.
\newblock Solvable fundamental groups of algebraic varieties and {K}\"{a}hler
  manifolds.
\newblock {\em Compositio Math.}, 116(2):173--188, 1999.

\bibitem{auslander_baumslag}
Louis Auslander and Gilbert Baumslag.
\newblock Automorphism groups of finitely generated nilpotent groups.
\newblock {\em Bull. Amer. Math. Soc.}, 73:716--717, 1967.

\bibitem{baues_grunewald}
Oliver Baues and Fritz Grunewald.
\newblock Automorphism groups of polycyclic-by-finite groups and arithmetic
  groups.
\newblock {\em Publ. Math. Inst. Hautes \'{E}tudes Sci.}, (104):213--268, 2006.

\bibitem{baumslag_nilpotent}
Gilbert Baumslag.
\newblock Automorphism groups of nilpotent groups.
\newblock {\em Amer. J. Math.}, 91:1003--1011, 1969.

\bibitem{baumslag_lecture_nilpotent}
Gilbert Baumslag.
\newblock {\em Lecture notes on nilpotent groups}, volume No. 2 of {\em
  Regional Conference Series in Mathematics}.
\newblock American Mathematical Society, Providence, RI, 1971.

\bibitem{borel}
Armand Borel.
\newblock {\em Linear algebraic groups}, volume 126 of {\em Graduate Texts in
  Mathematics}.
\newblock Springer-Verlag, New York, second edition, 1991.

\bibitem{collins}
Donald~J. Collins.
\newblock The automorphism towers of some one-relator groups.
\newblock {\em Proc. London Math. Soc. (3)}, 36(3):480--493, 1978.

\bibitem{cornulier}
Yves Cornulier.
\newblock Commability and focal locally compact groups.
\newblock {\em Indiana Univ. Math. J.}, 64(1):115--150, 2015.

\bibitem{dere1}
Jonas Der\'{e}.
\newblock Strongly scale-invariant virtually polycyclic groups.
\newblock {\em Groups Geom. Dyn.}, 16(3):985--1004, 2022.

\bibitem{felshtyn}
Alexander Fel'shtyn.
\newblock Dynamical zeta functions, {N}ielsen theory and {R}eidemeister
  torsion.
\newblock {\em Mem. Amer. Math. Soc.}, 147(699):xii+146, 2000.

\bibitem{dacibergwong}
Daciberg Gon\c~calves and Peter Wong.
\newblock Twisted conjugacy classes in nilpotent groups.
\newblock {\em J. Reine Angew. Math.}, 633:11--27, 2009.

\bibitem{gromov}
Mikhael Gromov.
\newblock Groups of polynomial growth and expanding maps.
\newblock {\em Inst. Hautes \'Etudes Sci. Publ. Math.}, (53):53--73, 1981.

\bibitem{lennox_robinson}
John~C. Lennox and Derek J.~S. Robinson.
\newblock {\em The theory of infinite soluble groups}.
\newblock Oxford Mathematical Monographs. The Clarendon Press, Oxford
  University Press, Oxford, 2004.

\bibitem{malcev}
A.~I. Malcev.
\newblock On a class of homogeneous spaces.
\newblock {\em Amer. Math. Soc. Translation}, 1951(39):33, 1951.

\bibitem{mostow}
G.~D. Mostow.
\newblock Representative functions on discrete groups and solvable arithmetic
  subgroups.
\newblock {\em Amer. J. Math.}, 92:1--32, 1970.

\bibitem{pete_nekrashevych}
Volodymyr Nekrashevych and G\'abor Pete.
\newblock Scale-invariant groups.
\newblock {\em Groups Geom. Dyn.}, 5(1):139--167, 2011.

\bibitem{platonov_rapinchuk}
Vladimir Platonov and Andrei Rapinchuk.
\newblock {\em Algebraic groups and number theory}, volume 139 of {\em Pure and
  Applied Mathematics}.
\newblock Academic Press, Inc., Boston, MA, 1994.
\newblock Translated from the 1991 Russian original by Rachel Rowen.

\bibitem{segal_outer}
Dan Segal.
\newblock On the outer automorphism group of a polycyclic group.
\newblock In {\em Proceedings of the {S}econd {I}nternational {G}roup {T}heory
  {C}onference ({B}ressanone, 1989)}, number~23, pages 265--278, 1990.

\bibitem{segal}
Daniel Segal.
\newblock {\em Polycyclic groups}, volume~82 of {\em Cambridge Tracts in
  Mathematics}.
\newblock Cambridge University Press, Cambridge, 1983.

\bibitem{springer}
T.~A. Springer.
\newblock {\em Linear algebraic groups}.
\newblock Modern Birkh\"{a}user Classics. Birkh\"{a}user Boston, Inc., Boston,
  MA, second edition, 2009.

\end{thebibliography}
\end{document}